\newtheorem{theorem}{Theorem}[section]
\newtheorem{prop}[theorem]{Proposition}
\newtheorem{lemma}[theorem]{Lemma}
\newtheorem{cor}[theorem]{Corollary}
\DeclareMathOperator{\lcm}{lcm}
\theoremstyle{definition}
\newtheorem{rem}[theorem]{Remark}
\theoremstyle{definition}
\newtheorem{predef}[theorem]{Definition}
\theoremstyle{definition}
\theoremstyle{remark}
\theoremstyle{remark}
\newtheorem{example}[theorem]{Example}
\newcommand{\End}{\mathop{\mathrm{End}}\nolimits}
\newcommand{\core}{\mathop{\mathrm{core}}\nolimits}
\newcommand{\xn}{X_{n}}
\newcommand{\xnp}{\xn^{+}}
\newcommand{\xnn}{X_n^{\mathbb{N}}}
\newcommand{\xnN}{X_n^{-\mathbb{N}}}
\newcommand{\spn}[1]{\widetilde{\mathscr{p}_{#1}}}
\newcommand{\shn}[1]{\widetilde{\mathcal{H}_{#1}}}
\newcommand{\hn}[1]{\mathcal{H}_{#1}}
\newcommand{\gen}[1]{\langle #1 \rangle}
\newcommand{\aut}[1]{\mbox{Aut}({#1})}
\newcommand{\shift}[1]{\sigma_{#1}}
\newcommand{\im}[1]{\mbox{Im}{(#1)}}
\newcommand{\Z}{\mathbb{Z}}
\newcommand{\N}{\mathbb{N}}
\newcommand{\spnprod}[1]{\ast_{\spn{n}}}
\newcommand{\dual}[1]{{#1}^{\vee}}
\newcommand{\dpi}{\dual{\pi}}
\newcommand{\disc}[1]{\mathop{\mathrm{disc}}(#1)}
\newcommand{\discT}[2]{\mathop{\mathrm{disc}}{\,\!\!}_{#2}(#1)}
\newcommand{\dlambda}{\dual{\lambda}}
\newcommand{\Duak}[2]{\mathscr{A}(\dual{#1}_{#2})}
\newcommand{\duA}[1]{\Duak{#1}{k}}
\newcommand{\rdual}[1]{\overline{\dual{#1}_{k}}}
\newcommand{\mb}[1]{\scalebox{1.2}{\ensuremath{#1}}}
\renewcommand{\bar}[1]{\overline{#1}}
\newcommand{\seteq}{:=}
\newcommand{\autskn}[1]{\aut{X_{#1}^{\N}, \shift{#1}}}
\newcommand{\asnn}{\autskn{n}}
\newcommand{\asnN}{\aut{X_{n}^{-\N}, \shift{n}}}
\newcommand{\letters}{\mathop{\mathrm{Letters}}}
\newcommand{\edges}{\mathop{\mathrm{E}}}
\newcommand{\lets}[2]{\letters(#1,#2)}
\newcommand{\letCnt}[2]{\mb{|}\lets{#1}{#2}\mb{|}}
\newcommand{\letsT}[3]{\letters{\,\!\!}_{#1}(#2,#3)}
\newcommand{\edgesT}[3]{\edges{\,\!\!}_{#1}(#2,#3)}
\subjclass[2020]{Primary 22F50;
                 Secondary  20F65, 28D15, 54H15, 68Q99} 
\keywords{group theory, dynamics, automorphisms of the one-sided shift, conjugacy, transducers, synchronizing automata}
\author[C.~Bleak]{Collin Bleak}
\address{University of St Andrews, St Andrews, Scotland, UK}
\email{collin.bleak@st-andrews.ac.uk}
\author[F.~Olukoya]{Feyishayo Olukoya}
\address{University of St Andrews, St Andrews, Scotland, UK}
\email{fo55@st-andrews.ac.uk}
\begin{document}

\title{Conjugacy for certain automorphisms of the one-sided shift via transducers}

\date{}

\begin{abstract}
    We address the following open problem, implicit in the 1990 article \textit{Automorphisms of one-sided subshifts of finite type} of Boyle, Franks and Kitchens (BFK): 
    \begin{quote}
        Does there exists an element $\psi$ in the group of automorphisms of the one-sided shift Aut($\{0,1,\ldots,n-1\}^{\mathbb{N}}, \shift{n}$) so that all points of $\{0,1,\ldots,n-1\}^{\N}$ have orbits of length $n$ under $\psi$ and $\psi$ is not conjugate to a permutation?
    \end{quote}
    Here, by a \emph{permutation} we mean an automorphism of the one-sided shift dynamical system induced by a permutation of the symbol set $\{0,1,\ldots,n-1\}$.
    
    We resolve this question by showing, constructively, that any $\psi$ with properties as above must be conjugate to a permutation.

    Our techniques naturally extend those of BFK using the strongly synchronizing automata technology developed here and in several articles of the authors and collaborators (although, this article has been written to be largely self-contained).
\end{abstract}

\maketitle

\tableofcontents
\section{Introduction}

Let $n$ be a positive integer and set $\xn\seteq\{0,1,\ldots,n-1\}$.  We use $\xn$ to represent our standard alphabet of size $n$ and we will denote by $\shift{n}$ the usual shift map on $\xnn$. The group $\asnn$ of homeomorphisms of $\xnn$ which commute with the shift map is called \emph{the group of automorphisms of the shift dynamical system}. This is a well-studied group in symbolic dynamics, with the special property (first given by Hedlund in \cite{Hedlund69}) that if $\phi\in \asnn$ has $(x_0x_1x_2\ldots)\phi  = y_0y_1y_2\ldots$  then there is an integer $k$ so that for all indices $i$, the value $y_i$ is determined by the finite word $x_ix_{i+1}\ldots x_{i+k}$.

The paper \cite{BoyleFranksKitchens} characterises all of the finite subgroups of the group $\asnn$, shows that this group contains  non-abelian free groups whenever $n \ge 3$, and investigates other algebraic structures of the group. The papers \cite{BoyleKrieger,BoyleFiebig} develop a conjugacy invariant for the group $\asnn$, arising from the action of the group on periodic words. This invariant consists of a tuple: the well-known \emph{gyration} and \emph{sign} functions, together with \emph{first return} data: bundled data associated to the permutation representation on prime words of length $k$.  

This article resolves the following open  problem, implicit in \cite{BoyleFranksKitchens}, which Mike Boyle suggested to us for its own sake, and, as a test of our approach.

Let $\Sigma_n$ represent the group of permutations of the set $X_n$. 
 By a mild abuse of language, we say $\phi\in \asnn$ is a \emph{permutation} if there is a  fixed permutation $\alpha\in\Sigma_n$ so that if $(x_0x_1x_2\ldots)\phi=y_0y_1y_2\ldots$ then we have $y_i=(x_i)\alpha$ for all $i$. We say a permutation is a \emph{rotation} if the permutation from $\Sigma_n$ is an $n$-cycle.   We can now state the problem:  
 \begin{quote} Does there exist an automorphism $\psi\in \asnn$ of order $n$ so that all points of $\xnn$ travel on orbits of size $n$, where $\psi$ is not conjugate to a rotation?\end{quote}  
 In  \cite{BoyleFranksKitchens} Boyle, Franks and Kitchens show that if $n$ is prime then any such $\psi$ is in fact conjugate to a rotation.  We show that the Boyle, Franks, and Kitchens result holds for general $n$.
 
We have written this article so that it is essentially self-contained for general researchers working with automorphisms of the shift. 
 In particular, we gather definitions and key constructions from  \cite{OlukoyaOrder} and \cite{BleakCameronOlukoyaI} here to simplify the presentation without insisting the reader peruse those articles to follow our discussion.  We use the highlighted technology to enhance the key method in the article \cite{BoyleFranksKitchens}.  The paper \cite{BleakCameronOlukoyaI} shows how to represent any automorphism $\phi$ of the one-sided shift by a particularly nice family of transducers (finite state machines that transform inputs sequentially) while \cite{OlukoyaOrder} investigates the order problem for that same family of transducers. A key idea of \cite{BleakCameronOlukoyaI} is that any such transducer $T$ representing $\phi$ can be thought of as a triple $(D,R,\phi_*)$, where $D$ and $R$ are \emph{strongly synchronizing automata} (with $D$ representing the domain and $R$ representing the range) and where $\phi_*$ is an  isomorphism between the underlying digraphs $\Gamma(D)$ and $\Gamma(R)$ of $D$ and $R$ determined by the action of $\phi$ on periodic words.  In the case of a finite order element, the domain and range automata can also be chosen to be identical.  See Section~\ref{sec:preliminaries} for details.

In the article \cite{BoyleFranksKitchens} the central method for studying finite subgroups of $\aut{\xnn, \shift{n}}$ is firstly to find an action of the group on the underlying digraph of an automaton (now understood to be a strongly synchronizing automaton).  Once the first step is accomplished, the group is decomposed as a composition series where each composition factor is isomorphic to a subgroup of the symmetric group $\Sigma_n$ on $n$-points.  This is accomplished by pushing the action down along what is called an ``amalgamation sequence'' (see Section~\ref{Sec:discandamalg} here) of the digraph until one has an action by automorphisms on a particularly nice digraph.
The construction typically requires passing through the automorphism groups of various one-sided shifts of finite type via topological conjugations induced by the amalgamations. 

Our first step simplifies this process. In particular we show that we can always find an action of a finite subgroup of $\aut{\xnn, \shift{n}}$ on the underlying digraph of a strongly synchronizing automaton whose amalgamation and synchronizing sequences cohere (Section~\ref{Sec:discandamalg}), thus we can push down along the synchronizing sequence of that automaton without needing to possibly change alphabet. This is already enough, when $n$ is prime, to show that every element of order $p$ in $\aut{\xnn, \shift{n}}$ is conjugate in $\aut{\xnn, \shift{n}}$ to a rotation.

However, to answer the open problem above, we need to go beyond this. Suppose $\phi\in \asnn$ has order $n$ and with the condition ($\star$) that all points of $\xnn$ travel on orbits of size $n$. It turns out that ($\star$) is equivalent to the condition that for any transducer $(A,A,\phi_*)$ representing $\phi$, the action of $\phi_*$ on $\Gamma(A)$ has the property that for every (based) circuit $C$ of $\Gamma(A)$ the  orbit length of $C$ under this action is $n$. (We are using based circuits here to avoid a circuit returning to itself with some non-trivial rotation as counting as completing the orbit.) When $n$ is a prime $p$, it is not hard to see that the action of $\phi_*$ on the underlying digraph is limited in orbit lengths for edges and vertices to $1$ and $p$.  When $n$ is not prime, orbit lengths of edges and vertices can be any divisor of $n$ even though all circuits have orbit length $n$. This last issue creates problems when trying to implement the approach successfully carried out by Boyle et al for $n$ prime.

We overcome this issue for such a $\phi$ with representative transducer $(A,A,\phi_*)$ in a two step process. First, we show that  the automaton $A$ can be ``fluffed up'' by adding \textit{shadow states} (Section~\ref{Sec:Shadowstates}) to create a new strongly synchronizing automaton $B$ with an induced and more informative action $\psi_*$ on $\Gamma(B)$ so that $(B,B,\psi_*)$ still represents $\phi$. By `more informative' we mean that 
the correct addition of shadow states results in states and edges originally on orbits of length $<n$ having resulting orbits of length  $n$. Second, we take advantage of this structure to find an action by a conjugate of $\phi$ on a strongly synchronizing automaton of strictly smaller size than $A$ (where the conjugacy occurs entirely with $\asnn$).

Our approach can now be summarised as follows. First we conjugate to get a (conjugate) action of $\phi$ on a strongly synchronizing automaton whose synchronizing sequence coheres with the amalgamation sequence of its underlying digraph. Then we have a series of ``fluffing up'' moves followed by reductions via conjugation. Eventually, these processes result in a conjugate action given by a transducer over a single state automaton with $n$ labelled loops, where each edge is on an orbit of length $n$; our original element $\phi$ must then be conjugate to a rotation.  

We note that our approach is \textit{constructive}. In particular, at each step of the process described above, one can build the conjugator that enables a reduction to take place.

The property of being a strongly synchronizing automaton is equivalent to that of being a \emph{folded de Bruijn graph}.
Crucial to the approach we have sketched out is the process: given a finite order element $\phi\in \asnn$, find the minimal {folded de Bruijn graph $\Gamma$} so that $\phi$ acts faithfully on $\Gamma$ by automorphisms.  The following is essentially a result from \cite{BleakCameronOlukoyaI} stated in our context (see Lemma \ref{lem:duAA_Strongly_Synchronizing} and Theorem \ref{thm:representations}, below).

\begin{theorem}
Let $n\geq 2$ be an integer and suppose $\phi\in \asnn$ is  a finite order element.  There is an effective process for determining $\Gamma_\phi$, the minimal folded de Bruijn graph on an $n$ letter alphabet, so that $\phi$ induces a natural automorphism of $\Gamma_\phi$.
\end{theorem}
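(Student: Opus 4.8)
The plan is to run the transducer machinery of \cite{BleakCameronOlukoyaI} and then minimise. By Theorem~\ref{thm:representations} the automorphism $\phi$ is represented by a transducer, and since $\phi$ has finite order we may take this transducer in the form $(A,A,\phi_*)$ with $A$ a strongly synchronizing automaton (Lemma~\ref{lem:duAA_Strongly_Synchronizing}); recall that strongly synchronizing automata are exactly the folded de Bruijn graphs, so $\phi_*$ is already an automorphism of a folded de Bruijn graph $\Gamma(A)$ inducing the given action of $\phi$ on periodic words. By Hedlund's theorem the window of $\phi$ is bounded, so $A$ may be taken of some synchronizing level $k$, whence $A$ is a folding (a quotient by a partition of the vertex set) of the full de Bruijn graph $G_{n,k}$ on $n$ letters, and the automorphism $\phi_*$ lifts to the tautological $\phi$-action on $G_{n,k}$. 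Thus every candidate for $\Gamma_\phi$ occurs as a folding of $G_{n,k}$ (for $k$ large) carrying the $\phi$-action, and the task reduces to identifying the smallest such folding.

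Next I would use the folding (out-amalgamation) moves underlying the synchronizing sequence of a strongly synchronizing automaton, as in Section~\ref{Sec:discandamalg}: amalgamate two vertices precisely when they have identical labelled out-transitions \emph{and} the amalgamation is respected by $\phi_*$, so that the quotient is again a folded de Bruijn graph carrying an automorphism that induces the correct action of $\phi$ on periodic words. Each such move strictly decreases the vertex count and preserves the property of being a folded de Bruijn graph with a compatible $\phi$-action; as the vertex set is finite, iterating terminates. One then declares $\Gamma_\phi$ to be a terminal object of this process, equipped with the induced automorphism.

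The main obstacle is well-definedness together with genuine minimality: one must show the terminal object is independent of the order of amalgamations and of the initial choice of $(A,A,\phi_*)$ and of $k$. The route I would take is to prove that the $\phi_*$-compatible ``synchronizing congruences'' on the vertex set of a given folded de Bruijn graph are closed under join, hence possess a unique coarsest element, and that maximal folding realises exactly the quotient by that element. Combined with the universal property of the de Bruijn graph (every folded de Bruijn graph of synchronizing level at most $k$, together with its $\phi$-action, is a folding of $G_{n,k}$), this shows every folded de Bruijn graph carrying the $\phi$-action surjects $\phi$-equivariantly onto $\Gamma_\phi$, so $\Gamma_\phi$ is the minimum and is unique up to canonical isomorphism; stability under enlarging $k$ follows since passing from $G_{n,k}$ to $G_{n,k+1}$ and then folding agrees with folding $G_{n,k}$ directly.

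Finally, effectiveness is then immediate: Hedlund's theorem gives a computable bound on $k$ from any window for $\phi$, the graph $G_{n,k}$ and the $\phi$-action on it are explicitly computable from such a window, and the coarsest $\phi_*$-compatible synchronizing congruence is obtained by a terminating fixed-point refinement on the finite vertex set — a $\phi$-equivariant variant of classical automaton state-minimisation. The procedure outputs $\Gamma_\phi$ together with the automorphism induced by $\phi$, as claimed.
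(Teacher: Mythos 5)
Your proposal attacks the statement by a genuinely different route than the paper. The paper does \emph{not} argue by minimising foldings of a large automaton; it constructs the minimal support $\duA{H}$ directly as (essentially) the zero of the dual semigroup $\langle\overline{\dual{H}}\rangle$ (Subsection~\ref{Subsection:buildingautomatonandautomorphism}), uses the structural ``very useful fact'' from \cite{OlukoyaOrder} about the form of that zero to define the transitions and the automorphism $\phi_{H}$ (Lemma~\ref{lem:duAA_Strongly_Synchronizing} and Theorem~\ref{thm:representations}), and then proves minimality in Proposition~\ref{Lemma:minimalityofdualaut} by showing that $\duA{H}$ is a folding of \emph{every} synchronizing support of $H$, the crux being the combinatorial disagreement argument in Lemma~\ref{Lemma:findingdisagreement}. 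In other words, the paper exhibits a terminal object; you propose to build one as a quotient of a common initial object and prove that the quotient is canonical.

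The lattice-theoretic piece of your plan is in fact sound once the setup is in place: $\phi_*$-compatible folding congruences on a fixed strongly synchronizing automaton are closed under join (both the folding condition and the compatibility of $\phi_*$ with the induced relabelling survive chaining along the join), so a unique coarsest such congruence exists and is computable by a terminating equivariant refinement. The gap is earlier, in the assertion that ``the automorphism $\phi_*$ lifts to the tautological $\phi$-action on $G_{n,k}$,'' which you then use to assert that every candidate for $\Gamma_\phi$ is a folding of a single common cover. No such tautological action is defined, and the obvious candidate lift fails: if one tries $w\mapsto\lambda_{H}(w,p)$ for a fixed state $p$ of the transducer, this is a bijection of $\xn^{k}$ carrying forcing sets to forcing sets (cf.\ Remark~\ref{rem:representationProps}), but it is not a digraph automorphism of $G(n,k)$, since $\lambda_{H}(w_{1}\cdots w_{k-1}x,p)$ and $\lambda_{H}(w,p)_{1\ldots k-1}$ need not agree. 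More fundamentally, the cancellation of the $p$-dependence needed to make such an action well defined is exactly what the paper extracts from the structure of $\overline{\dual{H}_{k}}$ being the semigroup zero --- it is \emph{not} an automatic consequence of $H$ being synchronizing at level $k$. Without a common cover supporting $H$, the join argument cannot compare two supports of $H$ that are foldings of $G(n,k)$ but not of each other. You would need either to prove that a suitable (in-split) cover carrying a compatible automorphism exists, or, as the paper does, to prove directly that a specific construction $\duA{H}$ is a folding of every support via an analysis of the dual transducer, rather than appealing to a universal property of the de~Bruijn graph that has not been established.
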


Finally, we can state the theorem which answers the question of Boyle. We actually prove a more general result.

\begin{theorem}\label{thm:mainresultIntro}
     Let $\phi \in \asnn$ be an element of finite order and let $N$ be a divisor of $n$. The following are equivalent:
     \begin{itemize}
             \item $\phi$ is conjugate to a permutation which is a product of $n/N$ disjoint cycles of length $N$;
         \item every element of $\xnn$ is on an orbit of length $N$ under the action of $\phi$; and
         \item for any folded de Bruijn graph $\Gamma_{\phi}$ admitting a faithful action by $\phi$ via an automorphism $\phi_*$, every (based) circuit of $\Gamma_{\phi}$ is on an orbit of length $N$ under $\phi_*$.
         \end{itemize}
\end{theorem}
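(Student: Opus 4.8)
The plan is to establish the cycle $(1)\Rightarrow(2)\Rightarrow(3)\Rightarrow(1)$, treating $(3)$ as the combinatorial reformulation of $(2)$ on which the transducer technology can operate. Two facts are used throughout: conjugation inside $\asnn$ preserves the $\phi$-orbit length of every point of $\xnn$ (in particular of every periodic point); and a finite-order $\phi$ is represented by a transducer $(A,A,\phi_*)$ in which $\Gamma(A)$ is a folded de Bruijn graph on $X_n$ and $\phi_*$ is an automorphism of $\Gamma(A)$ induced by the action of $\phi$ on periodic words, with the action of $\phi$ on periodic points readable off $\phi_*$ (Theorem~\ref{thm:representations}, Lemma~\ref{lem:duAA_Strongly_Synchronizing}). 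The implication $(1)\Rightarrow(2)$ is immediate: if $\psi\in\asnn$ is the permutation induced by $\alpha\in\Sigma_n$ and $\alpha$ is a product of $n/N$ disjoint $N$-cycles, then every symbol of $X_n$ lies on an $\alpha$-orbit of length exactly $N$, so for $x=x_0x_1x_2\ldots$ we have $\psi^k(x)=(x_0)\alpha^k\,(x_1)\alpha^k\,\ldots$, which equals $x$ precisely when $N\mid k$; hence every point is on a $\psi$-orbit of length $N$, and the same then holds for every conjugate of $\psi$, in particular for $\phi$.

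For $(2)\Rightarrow(3)$, fix a folded de Bruijn graph $\Gamma_\phi$ with a faithful $\phi_*$. Reading edge labels around a based circuit $C$ produces a periodic point $x_C\in\xnn$; because $\Gamma_\phi$ is strongly synchronizing, a based circuit whose length exceeds the synchronizing length of $\Gamma_\phi$ is recovered uniquely from $x_C$, and shorter circuits are handled by passing to a high power $C^r$, which has the same associated point and the same $\phi_*$-orbit length. By the construction of $\phi_*$ this correspondence intertwines the two actions, so the $\phi_*$-orbit length of $C$ equals the $\phi$-orbit length of $x_C$, which is $N$ by $(2)$. (The converse $(3)\Rightarrow(2)$ follows along the same lines, the one extra point being that for a prime $p\mid N$ the set $\mathrm{Fix}(\phi^{N/p})$ is a one-sided shift of finite type, since $\phi^{N/p}$ is a sliding block code; so if it were non-empty it would contain a periodic point, which would yield a based circuit of $\phi_*$-orbit length dividing $N/p<N$ against $(3)$. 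Together with $\phi^N=\mathrm{id}$ --- forced because $(3)$ makes $\phi_*^N$ fix every vertex and edge of $\Gamma_\phi$, and the action is faithful --- this gives $(2)$.)

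The implication $(3)\Rightarrow(1)$ is where the paper's constructive engine runs. Starting from a transducer $(A,A,\phi_*)$ for $\phi$, first conjugate inside $\asnn$ so that the synchronizing sequence of $\Gamma(A)$ coheres with its amalgamation sequence (Section~\ref{Sec:discandamalg}). Then iterate the following: if $\Gamma(A)$ is not already the single-vertex graph with $n$ labelled loops, ``fluff up'' $A$ by adjoining shadow states (Section~\ref{Sec:Shadowstates}) to obtain a strongly synchronizing automaton $B$ with $(B,B,\psi_*)$ still representing $\phi$, arranged so that every edge and vertex now lies on a $\psi_*$-orbit of length exactly $N$ while every based circuit still lies on an orbit of length $N$ (guaranteed by $(3)$ together with the periodic-point/circuit correspondence above and conjugation-invariance of periodic-orbit lengths); then exploit this uniformity to conjugate $\phi$ within $\asnn$ onto a strongly synchronizing automaton of strictly smaller size, and repeat. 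Strict decrease in size forces termination at the bouquet of $n$ loops, on which $\phi_*$ acts as a permutation $\alpha$ of the loops and the transducer presented is precisely the permutation automorphism induced by $\alpha$. Each loop being a based circuit, $(3)$ forces every symbol onto an $\alpha$-orbit of length $N$, i.e.\ $\alpha$ is a product of $n/N$ disjoint $N$-cycles; hence $\phi$ is conjugate to this permutation.

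The main obstacle is this fluff-up-and-reduce step. When $n$ is composite, edges and vertices of $\Gamma(A)$ may sit on $\phi_*$-orbits of length a proper divisor of $N$ even though all based circuits have orbit length $N$; the shadow-state construction must be engineered to remove precisely this defect without disturbing the circuit-orbit condition, and one must then show both that a strictly smaller strongly synchronizing automaton carrying the conjugated action exists and that the conjugacy is realisable inside $\asnn$, with the overall procedure terminating. By comparison, the remaining ingredients --- the $(1)\Rightarrow(2)$ computation, the periodic-point/circuit correspondence, and the amalgamation/synchronizing bookkeeping --- are routine.
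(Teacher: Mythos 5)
Your proposal follows essentially the same route as the paper: $(2)\iff(3)$ is the paper's Lemma~\ref{lemma:allorbitsonlegnthniffallcircuitsonlength} (the circuit/periodic-point correspondence), $(3)\Rightarrow(1)$ is the paper's Lemma~\ref{lemma:reverseimplicaton} (pass to an unimpeded support, add shadow states to uniformise orbit lengths, rebraid and collapse, iterate), and $(1)\Rightarrow(2)$ is the direct computation the paper leaves implicit. You are in fact slightly more careful than Lemma~\ref{lemma:allorbitsonlegnthniffallcircuitsonlength} in the $(3)\Rightarrow(2)$ direction, supplying the observation that $\mathrm{Fix}(\phi^{m})$ is a shift of finite type and hence, if non-empty, would contain a periodic point of too-short orbit, where the paper invokes density of periodic points somewhat tersely.
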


It is unclear at the moment how much our approach depends on the condition that ``all circuits are on orbits of length $N$''. In work in progress we aim to extend our current ideas towards resolving the conjugacy problem for finite order elements of $\asnn$.

\subsection*{Acknowledgements}
The authors are grateful for partial support from EPSRC research grants EP/R032866/1. The second author is additionally grateful for support from Leverhulme Trust Research Project Grant RPG-2017-159, LMS ECF grant ECF-1920-105 and EPSRC research grant EP/X02606X/1.  Finally, we are also grateful to Mike Boyle for conversations on the question we address here.

\section{Preliminaries}\label{sec:preliminaries}
\subsection{The natural numbers and some  of its subsets}

We use the notation $\N$ for the set $\{0,1,2,\ldots \}$; for $j \in \N$ we write $\N_{j}$ for the set $\{i \in \N: i \ge j\}$ of all natural numbers which are bigger than or equal to $j$.

\subsection{Words and infinite sequences}
In this subsection we set up necessary notation for words and sequences. 

Fix a finite alphabet set $X$.

For a natural number $m$, $X^m$ is the set of ordered $m$-tuples with coordinates from $X$.  We call these the \emph{words of length $i$ (over alphabet $X$)}.  By convention, we set  $X^0\seteq\{\varepsilon\}$ and we refer to $\varepsilon$ as \emph{the empty word} or \emph{empty string}, proclaiming this to be the same object, independent of the (non-empty) set $X$ of used as our alphabet. For $u\in X^*$ we set $|u|=m$ if $u\in X^m$, noting that  $|u|$ is the length of $u$. If $u\in X^m$ then we implicitly set values $u_i\in X$ for $0\leq i<m$ so that $u=(u_0,u_1,\ldots,u_{m-1})$.  In this context, from here forward, we will simply write $u=u_0u_1\ldots u_{m-1}$. For $u \in X^{m}$ and  $i \le |u|$, we write $u_{[0,i]}$ for the prefix $u_{0} \ldots u_{i}$ of $u$. 
 
We set $X^*\seteq \cup_{n\in \N}X^n$, the words of finite length  over $X$ (this is the Kleene-star operator);  $X^+\seteq X^*\backslash\{\varepsilon\}$, the non-trivial/non-empty finite length words over $X$; and, for $i \in \N$, $X^{\ge i} \seteq \bigcup_{j \ge i} X^{i}$, the set of words of length at least $i$. Analogously,  $X^{\le i}$ is the set of words of length at most $i$.

For $u,v\in X^*$, so that $u=u_0u_1\ldots u_{l-1}$ and $v=v_0v_1\ldots v_{m-1}$, $uv$ will represent the concatenation of these words: $uv\seteq u_0u_1\ldots u_{l-1}v_0v_1\ldots v_{m-1}$, which is a word of length $l+m$ over $X$.

Almost always, we will take $X_n := \{0,1,\ldots, n-1\}$ as our alphabet set of size $n$. In such a scenario, we often equip $X_n$ with the natural linear order $0 < 1 < \ldots < n-1$ and, $X_n^*$  with the induced dictionary order.  

\subsection{One-sided shift space}
Following the paper \cite{BleakCameronOlukoyaI}, we take $\xnN\seteq\{\ldots x_{-2}x_{-1}x_0\mid x_i\in \xn\}$ 
as our shift space, with the shift operator $\shift{n}$ defined by $(x_i)_{i\in-\N}\shift{n}=(y_i)_{i\in -\N}$
where we have $y_i= x_{i-1}$.   Thus, 
for a finite-length word over $\xn$ we may index this word with negative or positive indices as seems natural at the time.  When we are explicitly thinking of a finite subword $w\in \xn^k$ of a point $x\in\xnN$ we will ordinarily index $w$ as $w=w_{i-k+1}w_{i-k+2}\ldots w_{i}$ for some $i\in -\N$. 

Suppose $k$ is a positive integer and $u=u_{-(k-1)}u_{-(k-2)}\ldots u_{-1}u_0\in \xn^k$. We write $u^\omega$ for the element $x:=\ldots x_{-m}x_{-m+1}\ldots x_{-1}x_0  \in \xnN$, uniquely defined by the following rule: for all $i \in -\N$, $x_i=u_{-a}$ for $a$ between $0$ and $k-1$ congruent to $i$ modulo $k$. The element $x \in \xnN$ is called a \textit{periodic element}. The \textit{period} of $x$ is the smallest $j \in \N_1$ such that $(x)\shift{n}^{j} = x$.

\subsection{Automata and transducers}\label{subsec:autTran}

\subsubsection{Automata}
An \emph{automaton}, in our context, is a triple $A=(X,Q_A,\pi_A)$, where
\begin{enumerate}
\item $X$ is a finite set called the \emph{alphabet} of $A$;
\item $Q_A$ is a finite set called the \emph{set of states} of $A$;
\item $\pi_A$ is a function $X\times Q_A\to Q_A$, called the \emph{transition
function}.
\end{enumerate}

The \emph{size} of an automaton $A$ is the cardinality of its state set.  We use the notation $|A|$ for the size of the $A$.

We regard an automaton $A$ as operating as follows. If it is in state $q$ and
reads symbol $x$ (which we suppose to be written on an input tape), it moves
into state $\pi_A(x,q)$ before reading the next symbol. As this suggests, we
can imagine that the automaton $A$ is in the middle of an input word, reads the next letter and  moves to the right, possibly changing state in the process.

Mainly we consider automaton over an alphabet $\xn$ although there are some instances where $\xn$ is not the alphabet.

We can extend the domain of the transition function as follows. For $w\in X^m$, let $\pi_A(w,q)$ be
the final state of the automaton which reads the word $w$ from initial state $q$. Thus, if $w=w_0w_1\ldots w_{m-1}$, then
\[\pi_A(w,q)=\pi_A(w_{m-1},\pi_A(w_{m-2},\ldots,\pi_A(w_0,q))\ldots)).\]
By convention, we take $\pi_A(\varepsilon,q)=q$.

An automaton $A$ can be represented by a labelled directed graph $\Gamma_A$, whose
vertex set $V_A$ is $Q_A$.  For this directed graph there is a directed edge labelled by $x\in X$ from
$p$ to $q$ if $\pi_A(x,p)=q$.  Representing this, we determine the set $E_A$ of edges of $\Gamma_A$ to be the set of triples
\[ E_A \seteq \{(p,x,q)\mid \exists p,q\in Q_A, x\in X,\textrm{ so that } \pi_A(x,p)=q\}.
\] 
We have a labelling map $\mathfrak{l}_A: E_A \to X$ by $\mathfrak{l}_A((p,x,q) = x$. Thus $\Gamma_A = (V_A, E_A, \mathfrak{l}_{A})$. Note that for each $p, \in Q_A$ and $s \in X$, there is one and only one edge $(p,s,q)$.

Write $G_A$ for the unlabelled digraph $(V_A, E_A)$. The graph $\Gamma_A$ will be referred to as the \emph{underlying labelled digraph for the automaton $A$}; the graph $G_A$ will be referred to as the \textit{underlying digraph of $A$.}

\subsubsection{Transducers}
For our purposes, a \emph{transducer} is a quadruple $T=(X,Q_T,\pi_T,\lambda_T)$, where
\begin{enumerate}
\item $(X,Q_T,\pi_T)$ is an automaton;
\item $\lambda_T:X \times Q_T\to X$ is the \emph{output function}.
\end{enumerate}
Formally, such a transducer is an automaton which can write as well as read. After
reading symbol $a$ in state $q$, it writes a new symbol $\lambda_T(a,q)$ on an
output tape, and makes a transition into state $\pi_T(a,q)$. 

The size of a transducer is the size of its underlying automaton.

As with automata, we extend the domain of the transition function $\pi_{T}$ to $X^{\ast}\times Q_T$. We similarly extend the domain of the output function $\lambda_{T}$: 
for an element  $ w \in X^{\ast} \sqcup X^{\N}$ and a state $q \in Q_T$ we inductively define $\lambda_{T}(w,q)$ by the rule:
$$\lambda_T(w,q):=\lambda_{T}(w_0, q)\lambda_{T}(w-w_0, \pi_{T}(w_0,q))$$ 
where $w_0 \in X$ and $w-w_0 \in X^{\ast} \sqcup X^{\N}$ are uniquely defined by the equality $w_0(w-w_0) = w$. 
By convention, we take $\lambda_{A}(\varepsilon, q) = \varepsilon$. 
Thus, for each state $q \in Q_T$, and each $k \in \N$, $\lambda_{T}(\cdot,q)$ is a map from $X^{k}$ to itself as well as a continuous map from the Cantor space $X^{\N}$ to itself (equipping $X$ with the discrete topology and taking the product topology on $X^{\N}$).

For ease of notation we will normally denote the map $\lambda_{T}(\cdot, q)$ by $T_{q}$. We will dually think of this map $T_{q}$ as the transducer $T$ with the distinguished initial state $q$.

A transducer $T$ can also be represented as an edge-labelled directed graph.
Again the vertex set is $Q_T$; now, if $\pi_T(x,q)=p$, we put an edge with
label $x|\lambda_T(x,q)$ from $q$ to $p$. In other words, the edge label describes both the input and the output associated with that edge. We call $x$ the \textit{input label} of the edge and $\lambda_{T}(x,q)$ the \textit{output label} of the edge.

For example, Figure~\ref{fig:shift2} describes a synchronous transducer over the alphabet
$X_2$.

\begin{figure}[htbp]
\begin{center}
\scalebox{0.85}{
 \begin{tikzpicture}[shorten >=0.5pt,node distance=3cm,on grid,auto]
 \tikzstyle{every state}=[fill=none,draw=black,text=black]
    \node[state] (q_0)   {$a_1$};
    \node[state] (q_1) [right=of q_0] {$a_2$};
     \path[->]
     (q_0) edge [loop left] node [swap] {$0|0$} ()
           edge [bend left]  node  {$1|0$} (q_1)
     (q_1) edge [loop right]  node [swap]  {$1|1$} ()
           edge [bend left]  node {$0|1$} (q_0);
 \end{tikzpicture}
 }
 \end{center}
 \caption{A transducer over $X_2$ \label{fig:shift2}}
\end{figure}

In what follows, we use the language \emph{automaton} only for those automata which are not transducers.

\subsubsection{Transducer toolkit}\label{sec:transducertoolkit}

Let $T$ and $U$ be (not necessarily distinct) transducers over the alphabet $X$ and $(p,q) \in Q_T \times Q_u$. The states $p$ and $q$ are said to be \emph{$\omega$-equivalent} if the
transducers $T_{p}$ and $U_{q}$ induce the same continuous map $X^{\N}$. (This can
be checked in finite time, see~\cite{GNSenglish}.)  In the case that $p$ and $q$ are $\omega$-equivalent states, we say that the two transducers $T_p$ and $U_{q}$ are \emph{$\omega$-equivalent}. Note that in this case, for any $x \in X^{\ast}$, $\lambda_T(x, p) = \lambda_U(x,q)$ and the states $\pi_{T}(x, p)$ and $\pi_{U}(x, q)$ are again $\omega$-equivalent states.

A transducer is said to be \emph{minimal} if no two of its states are
$\omega$-equivalent. 

 Two minimal non-initial transducers $T$ and $U$  are said to be  \emph{$\omega$-equal} if there is a bijection $f: Q_{T} \to Q_{U}$, such that for any $q \in Q_{T}$, $T_{q}$ is $\omega$-equivalent to $U_{(q)f}$. Notice that such a bijection must be unique. 
 
 Two  minimal initial transducers $T_{p}$ and $U_{q}$ are said to be $\omega$-equal if they are $\omega$-equal as non-initial transducers and for the bijection $f: Q_{T} \to Q_{U}$ witnessing this, $(p)f = q$. We use the symbol `$=$' to represent $\omega$-equality of initial and non-initial transducers. Two non-initial transducers $T$ and $U$ are said to be \emph{$\omega$-equivalent} if they have $\omega$-equal minimal representatives

In the class of synchronous transducers, the  $\omega$-equivalence class of  any transducer has a unique minimal representative.

Throughout this article, as a matter of convenience, we shall not distinguish between $\omega$-equivalent transducers. Thus, for example, we introduce various groups as though their group elements are transducers, whereas the elements of these groups are in fact  $\omega$-equivalence classes of transducers. 

Given two transducers $T=(X,Q_T,\pi_T,\lambda_T)$ and
$U=(X,Q_U,\pi_U,\lambda_U)$ with the same alphabet $X$, we define their
product $T*U$. The intuition is that the output for $T$ will become the input
for $U$. Thus we take the alphabet of $T*U$ to be $X$, the set of states
to be $Q_{T*U}=Q_T\times Q_U$, and define the transition and rewrite functions
by the rules
\begin{eqnarray*}
\pi_{T*U}(x,(p,q)) &=& (\pi_T(x,p),\pi_U(\lambda_T(x,p),q)),\\
\lambda_{T*U}(x,(p,q)) &=& \lambda_U(\lambda_T(x,p),q),
\end{eqnarray*}
for $x\in X$, $p\in Q_T$ and $q\in Q_U$.

In automata theory, a synchronous (not necessarily initial) transducer $T = (X, Q_{T}, \pi_{T}, \lambda_T)$ is \emph{invertible} if for any state $q$ of $T$, the map $\rho_q:=\lambda_{T}(\centerdot, q): X \to X$ is a bijection. In this case the inverse of $T$ is the transducer $T^{-1}$ with state set $Q_{T^{-1}}:= \{ q^{-1} \mid q \in Q_{T}\}$, transition function $\pi_{T^{-1}}: X \times Q_{T^{-1}} \to Q_{T^{-1}}$ defined by $(x,p^{-1}) \mapsto q^{-1}$ if and only if $\pi_{T}((x)\rho_{p}^{-1}, p) =q$, and output function  $\lambda_{T^{-1}}: X \times Q_{T^{-1}} \to X$ defined by  $(x,p) \mapsto (x)\rho_{p}^{-1}$. Thus, in the graph of the transducer $T$ we  simply switch the input labels with the output labels and append `${}^{-1}$' to the state names.

Since each state of an invertible transducer induces a permutation of $X$, one can think of an invertible transducer $T$ as `gluing'' together a pair of automata along a digraph isomorphism. Said differently, an invertible transducer can be split off as a \textit{domain automaton} $(X, Q_T, \pi_T)$ and a \textit{range automaton} $(X, Q_{T^{-1}},\pi_{T^{-1}})$ with the ``gluing'' isomorphism  induced by the mapping $^{-1}: Q_T \to Q_{T^{-1}}$. This is a point of view we exploit in this work. Typically, in this approach, we identify $Q_T$ with $Q_{T^{-1}}$, that is, we will not distinguish between a state $q \in Q_T$ and the corresponding state $q^{-1}$ in $Q_{T^{-1}}$. This makes sense when one considers the graph of an invertible transducer: the graph of the domain automaton is obtained by deleting the output on transitions (so an edge label $x|y$ becomes the label $x$); dually the graph of the range automaton is obtained by deleting inputs on transitions (an edge label $x|y$ becomes the label $y$.

We extend the usual index laws to transducers with the product defined above. For example, we write $T^{m}$ for the $m$-fold product $T \ast T \ast \ldots \ast T$.

We are concerned only with {\textbf{invertible, synchronous transducers}} in this article.

\subsection{Automorphisms of the one-sided shift and synchronizing automata }

We briefly outline how one can represent an automorphism of the dynamical system $(\xnN, \shift{n})$ using a transducer over the alphabet $\xn$ which satisfies a strong synchronization condition. We note briefly, that synchronization occurs in automata theory
in considerations around the \emph{\v{C}ern\'y conjecture}. In that context, a word $w$ is said to be a \emph{reset word} for $A$ if $\pi_A(w,q)$ is
independent of $q$; an automaton is called \emph{synchronizing} if it has
a reset word~\cite{Volkov2008,ACS}. The strong synchronization condition alluded to, requires that every word of length $k$ to be a reset word for the automaton. For a fuller exposition see \cite{BleakCameronOlukoyaI}.

\subsubsection{Synchronizing automata}
Given a natural number $k$, we say that an automaton $A$ with alphabet $X_n$
is \emph{synchronizing at level $k$} if there is a map
$\mathfrak{s}_{k}:X_n^k\mapsto Q_A$ such that for any $q \in Q_A$ and any 
$w\in X_n^k$, we have $\pi_A(w,q)=\mathfrak{s}_{k}(w)=: q_w$. In other words, $A$ is synchronizing at level $k$ if, after reading a word $w$ of length $k$ from a state $q$, the final state depends only on
$w$.  We
call $q_w$ the state of $A$ \emph{forced} by $w$ and $\mathfrak{s}_{k}$ the synchronizing map (of $A$) at level $k$. An automaton $A$ is called \emph{synchronizing} if it is synchronizing at level $k$ for some $k$.

Clearly if an automaton is synchronizing at level $k$, then it is synchronizing at level $l$ for all $l \ge k$. Thus, for a synchronizing automaton $A$, we may define a map $\mathfrak{s}_{A}: \xn^{\ge k} \to Q_A$ by $\mathfrak{s}_{A}(w) = \mathfrak{s}_{|w|}(w)$. Notice that the image 
$\im{\mathfrak{s}_{A}}$ of $\mathfrak{s}_{A}$ is an 
inescapable set of states of $A$: for any $q \in \im{\mathfrak{s}_{A}}$ and any $x \in \xn$, $\pi_{A}(x,q) \in \im{\mathfrak{s}_{A}}$.  The following definition consequently makes sense.

Let $A$ be a synchronizing automaton, define the
\emph{core} of $A$, denoted $\core(A)$, to be the sub-automaton $(\xn, \im{\mathfrak{s}_{A}}, \pi_A)$ of $A$ with state set equal to the image of the map
$\mathfrak{s}_{A}$. This is a well-defined synchronizing automaton that depends only on $A$. 

We say that a synchronizing automaton or transducer is \emph{core} if it is equal to its core. 

Let $T$ be a transducer which is invertible with inverse $T^{-1}$. If $T$ is synchronizing at level $k$, and $T^{-1}$ is synchronizing at level $l$, 
we say that $T$ is \emph{bisynchronizing} at level $(k,l)$; if $T$ is bi-synchronizing at level $(k,k)$ for some $k$, then simply say $T$ is bi-synchronizing at level $k$.

We note that the product of two synchronizing transducers is again synchronizing (\cite{AutGnr}).

\subsubsection{Automorphisms of the one-sided shift from synchronizing transducers}

Let $T$ be a transducer which (regarded as an
automaton) is synchronizing at level $k$, then the core of $T$ (similarly denoted 
 $\core(T)$) induces a continuous map $$f_T:\xnN\to\xnN$$ as follows. Let $x \in \xnN$ and set $y \in \xnN$ to be the sequence defined by $$y_{i} = \lambda_T(x_i, q_{x_{i-k}x_{i-(k-1)}\ldots x_{i-1}}).$$ Note that $$\pi_{T}(x_{i}, q_{x_{i-k}x_{i-(k-1)}\ldots x_{i-1}}) = q_{x_{i-(k-1)}\ldots x_{i-1}x_i}.$$ Set $$(x)f_{T} = y.$$ Thus, from the point of view of the transition function of $T$ we in fact begin processing $x$ at $-\infty$ and move towards $x_0$.  (This is in keeping with our interpretation of transducer as representing machines applying sliding block codes, where here, we are thinking of $\asnN$ as consisting of the sliding block code transformations that require past information only to determine what to do with a digit.) Note, moreover, that the map $f_T$ is independent of the (valid) synchronizing level chosen to define it. 
We have the following result:

\begin{prop}\cite{BleakCameronOlukoyaI}\label{prop:pntildeisinendo}
Let $T$ be a minimal transducer which is  synchronizing at 
level $k$ and which is core. Then $f_T\in\End(\xnN,\shift{n})$.
\end{prop}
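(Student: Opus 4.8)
The plan is to show that $f_T$ is a continuous, shift-commuting self-map of $\xnN$; being an endomorphism of the dynamical system $(\xnN,\shift{n})$ requires exactly these two properties. Continuity is essentially built into the construction: I would observe that the value $y_i = \lambda_T(x_i, q_{x_{i-k}\ldots x_{i-1}})$ depends only on the finite window $x_{i-k}x_{i-(k-1)}\ldots x_i$ of length $k+1$, so $f_T$ is a sliding block code with memory $k$ and anticipation $0$; any such map is continuous for the product topology on $\xnN$ (two sequences agreeing on a long enough suffix have images agreeing on a correspondingly long suffix). This is where the hypothesis that $T$ is \emph{core} is used: it guarantees that for every $x\in\xnN$ and every $i$, the state $q_{x_{i-k}\ldots x_{i-1}}=\mathfrak{s}_k(x_{i-k}\ldots x_{i-1})$ actually lies in the state set, so the defining formula makes sense everywhere and is consistent (synchronization ensures $\pi_T(x_i,q_{x_{i-k}\ldots x_{i-1}})=q_{x_{i-(k-1)}\ldots x_i}$, so the "running state" is well defined along the whole sequence).

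Next I would verify that $f_T$ commutes with the shift. Fix $x\in\xnN$ and write $x'=(x)\shift{n}$, so $x'_i=x_{i-1}$. Computing $(x')f_T$ at coordinate $i$ gives $\lambda_T(x'_i,q_{x'_{i-k}\ldots x'_{i-1}})=\lambda_T(x_{i-1},q_{x_{i-1-k}\ldots x_{i-2}})=y_{i-1}$, which is exactly the $i$-th coordinate of $(y)\shift{n}=((x)f_T)\shift{n}$. Hence $f_T\circ\shift{n}=\shift{n}\circ f_T$. I would also note the remark already made in the text that $f_T$ is independent of the chosen synchronizing level, so there is no ambiguity in the construction; this follows since synchronizing at level $k$ implies synchronizing at level $l$ for all $l\ge k$ with the forced states compatible under restriction.

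The only genuine content beyond bookkeeping is that $f_T$ is a well-defined map $\xnN\to\xnN$ at all, and here the main obstacle — really the only place one must be slightly careful — is the two-sided infinite nature of the input: the formula for $y_i$ refers to the state $q_{x_{i-k}\ldots x_{i-1}}$ forced by the block immediately to the \emph{left} of position $i$, and one must check this is consistent as $i$ ranges over all of $-\N$, i.e.\ that there is a single well-defined bi-infinite (here, left-infinite) run of $T$ over $x$ whose successive outputs are the $y_i$. Synchronization at level $k$ resolves this: for any position $i$, the state reached after reading $x_{i-k}\ldots x_{i-1}$ from \emph{any} starting state is the same, namely $q_{x_{i-k}\ldots x_{i-1}}$, and reading one more letter $x_i$ moves to $q_{x_{i-(k-1)}\ldots x_i}$, so the runs anchored at different left-cutoffs agree on their common (right) portion; taking the limit produces the desired bi-infinite run. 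With that in hand, continuity and shift-equivariance as above complete the proof that $f_T\in\End(\xnN,\shift{n})$. I would cite \cite{BleakCameronOlukoyaI} for the detailed verification, since the statement is quoted from there.
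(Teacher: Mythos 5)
Your proposal is correct, and it is the standard argument one would expect; the paper itself does not reprove the statement but cites \cite{BleakCameronOlukoyaI}, so there is no alternative in-paper proof to compare against. The three ingredients you identify --- (i) $f_T$ is a sliding block code with memory $k$ and anticipation $0$, hence continuous; (ii) the running state $q_{x_{i-k}\ldots x_{i-1}}$ is consistent along all of $-\N$ because synchronization makes the forced state independent of the starting state and $\pi_T(x_i,q_{x_{i-k}\ldots x_{i-1}})=q_{x_{i-(k-1)}\ldots x_i}$; and (iii) a direct index computation showing $(x\shift{n})f_T = ((x)f_T)\shift{n}$ --- are exactly what is needed, and your index bookkeeping in (iii) matches the paper's convention $y_i = x_{i-1}$ for $\shift{n}$.

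One small imprecision: you attribute the well-definedness of $q_{x_{i-k}\ldots x_{i-1}}$ to the \emph{core} hypothesis, but in fact the forced state $\mathfrak{s}_k(w)$ exists for every $w\in\xn^k$ as soon as $T$ is synchronizing at level $k$, and it automatically lies in $\core(T)\subseteq Q_T$. The role of the core hypothesis is rather to make $T$ coincide with its core, so that $f_T$ (which the paper defines via $\core(T)$) is literally the map induced by $T$ itself and no states of $T$ are extraneous; it is not what rescues the formula for $y_i$. This does not affect the validity of your argument, since with $T$ core the two readings agree.
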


The transducer in Figure~\ref{fig:shift2} induces the shift map on $\xnN$. 

In \cite{AutGnr}, the authors show that the set $\shn{n}$ of
minimal finite synchronizing invertible synchronous core transducers is a monoid; the monoid operation
consists of taking the product of transducers and reducing it by removing non-core states and identifying $\omega$-equivalent states to obtain a minimal and synchronous representative. 

Let $\mathcal{H}_n$ be the subset
of $\shn{n}$ consisting of transducers which are bi-synchronizing.  A result of \cite{BleakCameronOlukoyaI} is that $\asnN \cong \mathcal{H}_n$. 

\subsection{Finite order elements of \texorpdfstring{$\hn{n}$}{Lg} as automorphisms of folded de Bruijn graphs}\label{Sec:debruijnandfolded}

In this subsection we introduce the main way we view finite order elements  of $\hn{n}$. The results in this section are based on the article \cite{BleakCameronOlukoyaI}.

\subsubsection{de Bruijn graphs, foldings and synchronizing automata}
For integers
$m\ge1$ and $n\ge2$, the \emph{de Bruijn graph} $G(n,m)$ can be defined as follows. The vertex set is $X_n^m$, for vertices $v,w \in \xn^{m}$, there is a directed edge from $v$ to $w$ if and only if
$v_1v_2\ldots v_{m-1} = w_0w_1\ldots w_{m-2}$. In this case, the edge from $v$ to $w$ has label $w_{m-1}$.

Figure~\ref{fig-DB-3-2-straight} shows the de Bruijn graph $G(3,2)$.
\begin{figure}[htb]
\begin{center}
\scalebox{0.75}{
\begin{tikzpicture}[->,>=stealth',shorten >=1pt,auto,node distance=2.3cm,on grid,semithick,every state/.style={draw=black,text=black},scale=1]

   \node[at={(0,2.9)},state] (a) {$00$}; 
   \node[at={(-3.3,-3.3)},state] (b)  {$11$}; 
   \node[at={(3.3,-3.3)},state] (c) {$22$}; 
   \node[at={(-1.2,-0.5)},state] (d)   {$01$}; 
   \node[at={(-3.0,0.6)},state] (f)  {$10$}; 
   \node[at={(3.0,0.6)},state] (e) {$02$}; 
   \node[at={(1.2,-0.5)},state] (h)  {$20$}; 
   \node[at={(0,-2.8)},state] (g)  {$12$}; 
   \node[at={(0,-4.7)},state] (i){$21$}; 

    \path (a) edge [out=70,in=110,loop,min distance=0.5cm]node [swap]{$0$} (a);
    \path (a) edge node [swap]{$1$} (d);
    \path (a) edge node {$2$} (e);
    \path (b) edge [out=205,in=245,loop,min distance=0.5cm]node [swap]{$1$} (b);
    \path (b) edge node {$0$} (f);
    \path (b) edge node [swap] {$2$} (g);
    \path (c) edge [out=-65,in=-25,loop,min distance=0.5cm]node [swap]{$2$} (c);
    \path (c) edge node {$1$} (i);
    \path (c) edge node [swap]{$0$} (h);
    \path (d) edge node [swap] {$1$} (b);
    \path (d) edge [bend right=15] node [swap]{$0$} (f);
    \path (d) edge node[swap] {$2$} (g);
    \path (e) edge [bend left=100,min distance=4.45cm] node {$1$} (i);
    \path (e) edge [bend right=15] node [swap] {$0$} (h);
    \path (e) edge  node {$2$} (c);
    \path (f) edge node {$0$} (a);
    \path (f) edge [bend right=15] node [swap]{$1$} (d);
    \path (f) edge [bend left=100,min distance=4.45cm] node {$2$} (e);
    \path (g) edge node[swap] {$0$} (h);
    \path (g) edge node [swap] {$2$} (c);
    \path (g) edge [bend right=15] node [swap] {$1$} (i);
    \path (h) edge [bend right=15] node [swap] {$2$} (e);
    \path (h) edge node [swap] {$0$} (a);
    \path (h) edge node [swap] {$1$} (d);
    \path (i) edge [bend left=100,min distance=4.45cm] node {$0$} (f);
    \path (i) edge [bend right=15] node [swap] {$2$} (g);
    \path (i) edge node {$1$} (b);
\end{tikzpicture}
}
\end{center}
\caption{The de Bruijn graph $G(3,2)$.\label{fig-DB-3-2-straight}}
\end{figure}

Observe that the de Bruijn graph $G(n,m)$ describes an automaton over the
alphabet $X_n$. Moreover, this automaton is synchronizing at level $m$: for $w \in \xn^{m}$, $q_{w} = w$.

The de Bruijn graph is the universal automaton (with respect to \textit{foldings})
over $X_n$ which is synchronizing at level $m$.

A \emph{folding} of an automaton $A$ over the alphabet $X_n$ is
an equivalence relation $\equiv$ on the state set of $A$ with the property
that reading the same letter from equivalent states takes the automaton
to equivalent states. More precisely, if $v\equiv v'$ and $\pi_A(x,v)=w$, $\pi_A(x,v')=w'$, then $w\equiv w'$.
If $\equiv$ is a folding of $A$, then we can uniquely
define the \emph{folded automaton} $A/{\equiv}$: the state set is the set
of $\equiv$-classes of states of $A$, and, denoting the $\equiv$-class of
$w$ by $[w]$, we have $\pi_{A/{\equiv}}(x,[w])=[\pi_A(x,w)]$.

\begin{prop}\cite{BleakCameronOlukoyaI}
The following are equivalent for an automaton $A$ on the alphabet $X_n$:
\begin{itemize}\itemsep0pt
\item $A$ is synchronizing at level $m$, and is core;
\item $A$ is the folded automaton from a folding of the de Bruijn graph $G(n,m)$.
\label{p:fold}
\end{itemize}
\end{prop}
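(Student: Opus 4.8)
The plan is to prove the two implications separately, in each case making explicit the bijection between the states of $A$ and either the vertices of $G(n,m)$ or their classes under a suitable folding.

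The implication \emph{folding $\Rightarrow$ synchronizing at level $m$ and core} is the easy one. Suppose $A = G(n,m)/{\equiv}$ for a folding $\equiv$, and write $[v]$ for the $\equiv$-class of $v\in\xn^m$. In $G(n,m)$, reading any word $w\in\xn^m$ from any vertex terminates at the vertex $w$; pushing this computation through the quotient, reading $w$ from any state $[v]$ of $A$ terminates at $[w]$, so $A$ is synchronizing at level $m$ with $\mathfrak{s}_m(w)=[w]$. Since every vertex of $G(n,m)$ is itself a word of length $m$, every state of $A$ has the form $[v]=\mathfrak{s}_m(v)$, hence lies in $\im{\mathfrak{s}_A}$, and therefore $A$ is core.

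For the converse, assume $A$ is synchronizing at level $m$ and core. First I would record that coreness of $A$ is equivalent to surjectivity of the forcing map $\Phi\colon\xn^m\to Q_A$, $w\mapsto q_w$: reading any word of length $\ell\ge m$ from any state lands at the state forced by its length-$m$ suffix, so $\im{\mathfrak{s}_A}=\im{\Phi}$, and $\core(A)=A$ says exactly $Q_A=\im{\mathfrak{s}_A}$. Then define an equivalence on $\xn^m$ by $v\equiv v'$ iff $q_v=q_{v'}$. The computational heart of the argument --- and the step I expect to be the main (if modest) obstacle --- is the identity
\[ q_{v_1 v_2\cdots v_{m-1}x} \;=\; \pi_A(x,q_v)\qquad (v\in\xn^m,\ x\in\xn), \]
obtained by reading the length-$(m+1)$ word $vx$ from an arbitrary state of $A$ in two ways: peel off the first letter $v_0$ and apply the level-$m$ synchronizing property to the remaining length-$m$ word $v_1\cdots v_{m-1}x$, versus read the length-$m$ prefix $v$ to reach $q_v$ and then read $x$. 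Granting this, the folding condition is immediate: if $v\equiv v'$ and the $x$-labelled edges of $G(n,m)$ leaving $v$ and $v'$ end at $w=v_1\cdots v_{m-1}x$ and $w'=v'_1\cdots v'_{m-1}x$, then $q_w=\pi_A(x,q_v)=\pi_A(x,q_{v'})=q_{w'}$, so $w\equiv w'$; thus $\equiv$ is a folding of $G(n,m)$. Finally, $[v]\mapsto q_v$ is well defined and injective by the definition of $\equiv$ and surjective by coreness, and the displayed identity shows it intertwines the transition functions, since $\pi_{G(n,m)/{\equiv}}(x,[v])=[v_1\cdots v_{m-1}x]$ is sent to $q_{v_1\cdots v_{m-1}x}=\pi_A(x,q_v)$. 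Hence $A\cong G(n,m)/{\equiv}$, completing the proof.
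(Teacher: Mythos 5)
The paper cites this proposition from \cite{BleakCameronOlukoyaI} without reproducing a proof, so there is no in-text argument to compare against; I have therefore checked your proposal on its own terms. It is correct and complete: the forward direction correctly transports the universal synchronizing property of $G(n,m)$ (namely $q_w=w$) through the quotient map, and your identification of coreness with surjectivity of $w\mapsto q_w$ is exactly right since any word of length at least $m$ synchronizes to the state forced by its length-$m$ suffix. The central identity $q_{v_1\cdots v_{m-1}x}=\pi_A(x,q_v)$ is precisely what is needed and your two-ways-of-reading argument establishes it cleanly; it simultaneously yields the folding condition for $\equiv$ and the fact that $[v]\mapsto q_v$ intertwines the transition functions. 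This is the natural direct argument and I see no gap.
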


We may think of a de Bruijn graph $G(n,m)$ as determining a finite category, with objects the foldings of $G(n,m)$ and with arrows digraph morphisms which commute with the transition maps of the given automata.  It is immediate in that point of view that all such arrows are surjective digraph morphisms (and indeed, these are folding maps).  

\subsubsection{Finite order elements of \texorpdfstring{$\hn{n}$}{Lg} as automorphisms of synchronizing automata}\label{sec:FiniteSubgroups}\label{sec:finiteByGraphAuto}

Let $A$ be a finite automaton on edge-alphabet $X_n$. Let $E_{A} \subset    Q_{A} \times X_n\ \times Q_{A}$ be the set of edges of $\Gamma_{A}$, and $G_{A}$ be the underlying unlabelled digraph of $A$. Recall, that an edge $(p, x, q) \in E_{A}$ has the label `$x$' in $\Gamma_{A}$.

Let $\phi$ be an automorphism of the directed graph $G_{A}$. The action of $\phi$ on $E_A$ uniquely determines a function, 
$\lambda_{\mathscr{H}(A,\phi)}: \xn \times Q_{A} \to \xn$ by the rule $\lambda_{\mathscr{H}(A,\phi)}(x, p)= y$ if and only if for the unique $q \in Q_A$ such that $(p,x,q) \in E_A$, $(p,x,q)\phi = (r,y,s)$. 

The transducer $\mathscr{H}(A, \phi)$, which  is invertible can be thought of as the result of gluing the automaton $A$ to a copy of itself along the map $\phi$ (see the discussion the definition of an invertible transducer in Section~\ref{sec:transducertoolkit}). In particular, if $(p,s,q) \in E_{A}$, and $(p,x,q)\phi = (p\phi,y\phi,q\phi)$, then 
\begin{itemize}
    \item the vertices $p$ and $(p)\phi$ are identified;
    \item the vertices $q$ and $(q)\phi$ are identified;
     \item the edges $(p,x,q)$ and are identified to a single edge $((p,x,q))\phi$ labelled $x | y$. 
\end{itemize}

Note that the domain and range automaton of $\mathscr{H}(A,\phi)$ are both equal to $A$.

\begin{rem}\label{rem:representationProps}
Suppose $A$ is synchronizing at level $k$ and core. We make a few observations. 
\begin{enumerate}
	
	\item  Both $\mathscr{H}(A, \phi)$ and $\mathscr{H}(A, \phi)^{-1} = \mathscr{H}(A, \phi^{-1})$ are synchronizing at level $k$ hence the minimal  representative $\overline{\mathscr{H}(A, \phi)}$ of $\mathscr{H}(A, \phi)$ is an  element of $\hn{n}$. 
	
	\item In fact, for a state $q \in Q_{A}$, if $$W_{k,q}:= \{ w \in X_{n}^{k}: \pi_{\mathscr{H}(A, \phi)}(w,q) = q \}$$ is the set of words of length $k$ that force the state $q$,  then $$\{\lambda_{\mathscr{H}(A, \phi)}(w, p) \mid w \in W_{k,q}, p \in Q_{\mathscr{H}(A, \phi)} \}= W_{k, (q)\phi}$$ is the set of words of length $k$ that force the state $(q)\phi$ of $A$,
	
		\item \label{rem:repProp-finOrder} An element of $\hn{n}$ which can be represented by a transducer $\mathscr{H}(A,\phi)$ for some folded de Bruijn graph $A$ and digraph automorphism $\phi$ of $G_A$ must have finite order. 
\end{enumerate}
\end{rem}

Let $H\in \hn{n}$. Suppose $A$ is a synchronizing and core automaton admitting a digraph automorphism $\phi: G_A\to G_A$ so that $H$ and $\mathscr{H}(A,\phi)$ represent the same transformation of $\xnN$ then we say \emph{$H$ is induced from $(A,\phi)$}. We further  say that the synchronizing automaton \emph{$A$ is an automaton supporting/is a support of $H$ (realised by $\phi$)}.

\section{Minimal digraphs for finite order elements of \texorpdfstring{$\hn{n}$}{Lg}} 

In this  section, we demonstrate constructively that for a finite order element $H \in \hn{n}$, there is a unique smallest synchronizing automaton $A$ supporting $H$. 

The proof of this result really is a strengthening of Theorem 4.5 of \cite{BleakCameronOlukoyaI} and, as in that article, we require some facts about duals of synchronizing transducers. For more on duals of general transducers  one can consult \cite{Klimann2012,AkhaviKlimannLombardyMairessePicantin2012,NekrashevychSSG}; for more on duals of synchronizing transducers one can consult \cite{OlukoyaOrder}.

\subsection{The dual transducer}\label{subsec:alphaAndDual}

Fix $T = (\xn, Q_T, \lambda_{T}, \pi_{T})$ a transducer. Define functions $\dpi_{T}: Q_{T} \times \xn \to \xn$ and $\dlambda_{T}: Q_T \times \xn \to \xn$ as follows. For $q \in Q_{T}$ and $x \in \xn$, $\dpi_{{T}}(q, x) = y$ if and only if $\lambda_{T}(x, q) = y$ and $\dlambda_{{T}}(q, x) = p$ if and only if $\pi_{T}(x,q) = p$.  The transducer $\dual{T} = \gen{Q_{T}, X_{n}, \dpi_{T}, \dlambda_{T}}$ is the \textit{dual transducer of $T$}. The state set of $\dual{T}$ is the set $X_{n}$, the alphabet of $\dual{T}$ is the state set $Q_{T}$ of $T$.

We often need to consider powers (that is powers under the transducer product defined in Section~\ref{sec:transducertoolkit}) of the dual transducer $\dual{T}$,  and the following construction, the `paths to letters' construction,  simplifies how to compute this from $T$ (see \cite{AkhaviKlimannLombardyMairessePicantin2012} where it plays a key role).

Let $m \in \N_{1}$.  Define functions $\pi_{T(m)}: \xn^{m} \times Q_{T} 
\to Q_{T}$ by $\pi_{T(m)}(w, q) = \pi_{T}(x, q)$ and 
$\lambda_{T(m)}: \xn^{m} \times Q_{T}$ by $\lambda_{T(m)}(w,q) = 
\lambda_{T}(w,q)$. The transducer $T(m) = (\xn^{m}, Q_{T}, 
\pi_{T(m)}, \lambda_{T(m)})$ is then the transducer $T$ where 
the input alphabet is words over the alphabet $m$. The $m$\textsuperscript{th} power of $\dual{T}$, is the dual of $T(m)$.

\begin{lemma}
    Let $T$ be a synchronous transducer over alphabet $X_n$.  For positive natural $m$, we have $(\dual{T})^{m}=\dual{T(m)}$.
\end{lemma}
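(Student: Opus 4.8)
The plan is to unwind all the definitions and verify that the two sides agree entry by entry, by induction on $m$. First I would observe that both $(\dual{T})^{m}$ and $\dual{T(m)}$ are transducers over the alphabet $Q_{T}$ --- for $\dual{T(m)}$ this is the definition of a dual, and for $(\dual{T})^{m}$ it is because a product of transducers over a common alphabet stays over that alphabet, and $\dual{T}$ has alphabet $Q_{T}$. Their state sets are $\xn^{m}$ (words of length $m$) in the case of $\dual{T(m)}$, and the $m$-fold Cartesian power of the state set $\xn$ of $\dual{T}$ in the case of $(\dual{T})^{m}$; I would identify the tuple $(p_{0},\ldots,p_{m-1})$ with the word $p_{0}p_{1}\cdots p_{m-1}$ --- precisely the identification already implicit in writing ``the $m$-fold product''. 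Under it, what remains is to show that the transition functions $\pi$ and the output functions $\lambda$ of the two transducers coincide.

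I would prove this by induction on $m$. The base case $m=1$ is immediate since $T(1)=T$. For the inductive step, write $(\dual{T})^{m+1}=(\dual{T})^{m}\ast\dual{T}$ (associativity of $\ast$, modulo reassociating Cartesian powers) and use the inductive hypothesis to replace $(\dual{T})^{m}$ by $\dual{T(m)}$. Feeding a letter $a\in Q_{T}$ to $\dual{T(m)}\ast\dual{T}$ from a state $(w,p)$ with $w\in\xn^{m}$, $p\in\xn$: the first factor outputs $\dlambda_{T(m)}(a,w)=\pi_{T}(w,a)$ and moves to $\dpi_{T(m)}(a,w)=\lambda_{T}(w,a)$; the second factor then reads the letter $\pi_{T}(w,a)$ from $p$, outputting $\dlambda_{T}(\pi_{T}(w,a),p)=\pi_{T}(p,\pi_{T}(w,a))$ and moving to $\dpi_{T}(\pi_{T}(w,a),p)=\lambda_{T}(p,\pi_{T}(w,a))$. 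The ``append on the right'' recursions $\pi_{T}(wp,a)=\pi_{T}(p,\pi_{T}(w,a))$ and $\lambda_{T}(wp,a)=\lambda_{T}(w,a)\,\lambda_{T}(p,\pi_{T}(w,a))$ for the extended transition and output functions then identify the output of $(\dual{T})^{m+1}$ on $(a,(w,p))$ with $\pi_{T}(wp,a)=\dlambda_{T(m+1)}(a,wp)$ and its new state with the word $\lambda_{T}(wp,a)=\dpi_{T(m+1)}(a,wp)$, under the identification $(w,p)\leftrightarrow wp$. This closes the induction.

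I do not expect a genuine mathematical obstacle here; the argument is bookkeeping. The one place to be careful is keeping the conventions aligned: the order of the two arguments in $\pi_{T},\lambda_{T}$ versus in $\dpi_{T},\dlambda_{T}$, the ``output-of-the-first-factor-is-the-input-of-the-second'' orientation of $\ast$, and --- crucially --- that chaining the running letter through successive copies of $\dual{T}$ corresponds to appending letters of $w$ on the right, so that $m$ applications of $\pi_{T}$ assemble into $\pi_{T}(w,\cdot)$ with $w=w_{0}\cdots w_{m-1}$; this is what pins down the tuple-to-word identification as $(p_{0},\ldots,p_{m-1})\mapsto p_{0}\cdots p_{m-1}$ rather than its reverse. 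Synchronicity of $T$ is used only to ensure $T(m)$ (and hence its dual) is a well-defined transducer over $\xn^{m}$; the verification itself is formal, and yields equality of the two transducers on the nose (modulo the stated identification), which a fortiori gives any weaker sameness one might read into ``$=$''.
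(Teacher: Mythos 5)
Your proof is correct. The paper itself states this lemma without proof, so there is nothing to compare against; your direct verification by induction on $m$, tracing both $\pi$ and $\lambda$ through one extra factor of $\dual{T}$ and matching the result against the ``append-on-the-right'' recursions for $\pi_T$ and $\lambda_T$ on words, is exactly the natural argument, and your identification $(p_0,\ldots,p_{m-1})\leftrightarrow p_0p_1\cdots p_{m-1}$ (rather than the reverse) is correctly forced by the orientation of the $\ast$-product, in which the output of factor $i$ feeds factor $i+1$.
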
 
To lighten our notation below, we use the notation $\dual{T}_{m}$ for the transducer $\dual{T(m)}$.

Also observe that $\dual{T^{-1}}$ is obtained from $\dual{T}$ by `reversing the arrows'. That is, for $x,y \in \xn$, $q, p \in Q_{T}$  such that $\dpi_{T}(q, x) = y$ and $\dlambda(q, x) =p$, then $\dpi_{T^{-1}}(q^{-1}, y) = x$ and  $\dlambda(q^{-1}, y) =p^{-1}$.

\subsection{Finite order elements of \texorpdfstring{$\hn{n}$}{Lg}}

 We are now in a position to construct for a finite order element $H \in \hn{n}$ and corresponding $\phi_H \in \asnN$, the minimal synchronizing automaton $A$ admitting an automorphism $\phi$ such that $H$ is induced from  $(A,\phi)$.

We have some potential collisions of notation that we ought to clarify.  

Firstly $\phi\in \asnN$, then we can represent $\phi$ by a (minimal) transducer $H_\phi\in \hn{n}$. (Determining $H_\phi\in \hn{n}$ from a given element $\phi\in \asnN$ is not difficult --- one simply associates states to maps determined by local actions of the sliding block code. 
 See \cite{BleakCameronOlukoyaI} or \cite{BleakCameronOlukoyaII}  for details.)

Secondly, if $H\in \hn{n}$, then $H$ represents an element $\phi_H\in \asnN$.

Finally, if $\phi\in \asnN$ has finite order, then as we will see from Theorem \ref{thm:representations} there is a synchronizing automaton $A$ and an automorphism $\psi$ of the underlying digraph $G_{A}$ so that $H_\phi$ and $\mathscr{H}(A,\psi)$ represent the same element.  It happens that there is a way to define $\psi$ from $\phi$, and also,  from $H_\phi$.  Similarly, we could have begun this paragraph with an element $H\in \hn{n}$, in which case $\psi$ would be defined from both $\phi_H$ and from $H$.

To unify our notation, we will write $\phi_H$ for both the maps $\psi$ and $\phi_H$ as in the above situation.  This of course means that $\phi_H$ will represent two different things (an automorphism of the one-sided shift, or alternatively, an automorphism of a digraph underlying a folded de Bruijn graph).  It will be clear what is meant from the context.

For the remainder of this section, \textbf{fix a finite order element $H \in \hn{n}$}. We begin by constructing an automaton $\duA{H}$ admitting an automorphism $\phi$ such that $H$ is induced from $(\duA{H},\phi)$.  

\subsubsection{Building \texorpdfstring{$\duA{H}$}{Lg} from \texorpdfstring{$H$}{Lg}}\label{Subsection:buildingautomatonandautomorphism}

The construction that follows is illustrated in Example~\ref{Example:minrepfromdual} for an element of $\hn{6}$ of order $6$.

 Consider the semigroup $ \{\overline{\dual{H}_{i}} : i \in \N_1\}=:\gen{\overline{\dual{H}}}$ of minimal representatives (as in Subsection~\ref{sec:transducertoolkit}) of powers of $\dual{H}$. In 
\cite[Proposition 4.15]{OlukoyaOrder} it is shown that there is a $k \in \N$ 
such that $\rdual{H}$ is the zero $\gen{\overline{\dual{H}}}$, that is, $\overline{\dual{H}_{j}} = \overline{\dual{H}_{k}}$ for all $j \in \N_{k}$. Fix the minimal $k \in \N$ where this happens. By construction, for $\gamma \in \xn^{k}$, a state $[\gamma]$ of $\rdual{H}$ is the $\omega$-equivalence class of the state $\gamma$ of $\dual{H}_{k}$. Since $\overline{\dual{H}_{j}} = \overline{\dual{H}_{k}}$ for all $j \in \N_{k}$, we may in fact identify a state $[\gamma]$ of $\rdual{H}$ with the set of all words $\delta \in \xn^{\ge k}$ such that the initial transducers $(\dual{H}_{|\delta|})_{\delta}$ and $(\rdual{H})_{[\gamma]}$ are $\omega$-equivalent.

The following is a {\bf very useful fact} from \cite{OlukoyaOrder}: for every state $[\gamma]$ of the zero
$\overline{\dual{H}_{k}}$, there is a word $W([\gamma]) \in Q_{H}^{+}$ such that for any input word $s \in Q_{H}^{+}$, the output when $s$ is processed from the state $[\gamma]$ 
of $\overline{\dual{H}_{k}}$ is the word 
$(W([\gamma]))^{l} W([\gamma])_{[0,m-1]}$, where, $|s|  = l|W([\gamma])| + m$ and $W([\gamma])_{[0,m-1]}$ is the length $m$ prefix of $(W([\gamma]))$.  It follows from this that $\overline{\dual{H}_{k}}$ has the following structure:  for each state $[\gamma]$ of $\overline{\dual{H}_{k}}$  there is a $q_{[\gamma]}\in Q_H$ so that for any $p\in Q_H$ 

\begin{itemize}
    \item $\pi_{\overline{\dual{H}_{k}}}(p,[\gamma])= [\lambda_{H}(\gamma, p)] = [\gamma] H$, and
    \item $\lambda_{\overline{\dual{H}_{k}}}(p,[\gamma])= q_{\gamma} = q_{[\gamma]}$.
    \end{itemize}

Form the automaton $\duA{H}$ as follows. The states of $\duA{H}$ are the states $[\gamma]$ of $\overline{\dual{H}_{k}}$ and the transitions are given by the rule that for $x \in \xn$, and $[\gamma]$ a state of 
$\overline{\dual{H}_{k}}$, we set  $$\pi_{\duA{H}}(x, [\gamma]) =  
[\gamma_{[1, |\gamma|-1]}x].$$ By the observation above, it does not matter which $\gamma\in \xn^{\ge k}$ one picks from the class $[\gamma]$. 

The following lemma is immediate from the definitions:
\begin{lemma}\label{lem:duAA_Strongly_Synchronizing}
    Let $H\in \hn{n}$ be an element of finite order then the automaton $\duA{H}$ is synchronizing at level $k$.
\end{lemma}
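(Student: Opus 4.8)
The statement asserts that the automaton $\duA{H}$, built from the zero $\overline{\dual{H}_{k}}$ of the semigroup $\gen{\overline{\dual{H}}}$ of minimal powers of the dual transducer, is synchronizing at level $k$. My plan is to verify directly from the definition of "synchronizing at level $k$": I must produce a map $\mathfrak{s}_k : \xn^k \to Q_{\duA{H}}$ such that for every state $[\gamma]$ and every word $w \in \xn^k$, reading $w$ from $[\gamma]$ in $\duA{H}$ lands in $\mathfrak{s}_k(w)$, independently of $[\gamma]$.

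**Key steps.**

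First, I would unwind the transition rule of $\duA{H}$. By definition, for $x \in \xn$ we have $\pi_{\duA{H}}(x, [\gamma]) = [\gamma_{[1,|\gamma|-1]}\,x]$; iterating this for a word $w = w_0 w_1 \cdots w_{k-1} \in \xn^k$, the successive states are obtained by deleting a letter from the front of (a representative of) $\gamma$ and appending one letter of $w$ at each step. After $k$ steps, every letter of the original representative $\gamma$ has been shifted off the front, so the resulting state is $[\,w\,]$ — the $\omega$-equivalence class of the length-$k$ word $w$ regarded as a state of $\dual{H}_k$ (using that $k$ is the stabilization index, so $\overline{\dual{H}_j} = \overline{\dual{H}_k}$ for $j \ge k$, hence $w \in \xn^k \subseteq \xn^{\ge k}$ indeed names a state of $\overline{\dual{H}_k}$). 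The one point requiring care is well-definedness: I need that the class $[\gamma_{[1,|\gamma|-1]}\,x]$ does not depend on the choice of representative $\gamma$ of $[\gamma]$; this is exactly the "it does not matter which $\gamma \in \xn^{\ge k}$ one picks" remark preceding the lemma, which in turn rests on the structural fact imported from \cite{OlukoyaOrder} that $\pi_{\overline{\dual{H}_k}}(p,[\gamma]) = [\gamma]H$ is independent of $p \in Q_H$, so that $\omega$-equivalence of $\gamma, \gamma'$ as states of $\dual{H}_k$ forces $\omega$-equivalence of $\gamma_{[1,|\gamma|-1]}x$ and $\gamma'_{[1,|\gamma'|-1]}x$.

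**Completing the argument.**

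I would then simply set $\mathfrak{s}_k(w) := [\,w\,]$ for $w \in \xn^k$, and the computation of the previous paragraph shows $\pi_{\duA{H}}(w, [\gamma]) = \mathfrak{s}_k(w)$ for every state $[\gamma]$, which is precisely the synchronization condition at level $k$. The argument is genuinely "immediate from the definitions" as the paper claims; the only substantive ingredient is the well-definedness of the transitions of $\duA{H}$, which is not re-proved here because it is quoted from \cite{OlukoyaOrder} and already stated in the paragraph preceding the lemma. So I do not expect a real obstacle — if anything, the subtle point to state carefully is that $w$ itself, as an element of $\xn^k = \xn^{\ge k} \cap \xn^{k}$, legitimately names a state of $\overline{\dual{H}_k}$ (this uses minimality of $k$), and that the $k$-fold shift-and-append operation applied to any representative of any state terminates at $[\,w\,]$ regardless of where it started.
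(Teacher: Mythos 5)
Your proposal is correct and is essentially the argument the paper has in mind when it labels the lemma ``immediate from the definitions'' (the paper gives no written proof at all). You correctly reduce the claim to the observation that the transition rule $\pi_{\duA{H}}(x,[\gamma]) = [\gamma_{[1,|\gamma|-1]}x]$ acts by shift-and-append on length-$k$ representatives, so that reading any $w\in\xn^{k}$ from any state lands in $[w]$, and you correctly locate the one substantive prerequisite --- well-definedness of the transition independent of the chosen representative --- in the paragraph preceding the lemma (which rests on the cited structural fact from \cite{OlukoyaOrder}) rather than re-deriving it.
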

We also have the following translation (into our context) of the statement of Theorem 4.5 of  \cite{BleakCameronOlukoyaI}: $A(G)$ in that theorem corresponds to $\duA{H}$ here, and  $G$ there is the group $\langle H\rangle$ here.

\begin{theorem}\label{thm:representations}
Let $H\in \hn{n}$ be an element of finite order. Then
\begin{enumerate}
	\item $H$ acts as an automorphism $\phi_{H}$ of the digraph underlying $\duA{H}$ by mapping an edge $([\gamma], x, [\gamma_{[1,|\gamma|-1]}x])$ to the edge $(([\gamma])H, \lambda_{H}(x, q_{\gamma}), ([\gamma_{[1,|\gamma|-1]}x])H )$ where $([\gamma])H = [\lambda_{H}(\gamma, q)]$ for some $q \in Q_{H}$;
	\item the minimal representative of the transducer $\mathscr{H}(\duA{H},\phi_{H}^{i})$ is the element $H^i \in \hn{n}$.
\end{enumerate}
\end{theorem}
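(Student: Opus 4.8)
The plan is to unpack the two claims directly from the structure of $\overline{\dual{H}_{k}}$ established in Section~\ref{Subsection:buildingautomatonandautomorphism}, together with the duality between a transducer and its dual. First I would verify that the prescribed assignment on edges is well-defined and is a digraph automorphism of $G_{\duA{H}}$. Well-definedness on edges requires checking that the target edge $(([\gamma])H, \lambda_{H}(x,q_{\gamma}), ([\gamma_{[1,|\gamma|-1]}x])H)$ is genuinely an edge of $G_{\duA{H}}$ — that is, that its endpoints differ by the shift-by-one relation defining $\pi_{\duA{H}}$ — and that the formula does not depend on the representative $\gamma \in \xn^{\ge k}$ chosen from the class $[\gamma]$. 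Both follow from the bulleted identities $\pi_{\overline{\dual{H}_k}}(p,[\gamma]) = [\gamma]H$ and $\lambda_{\overline{\dual{H}_k}}(p,[\gamma]) = q_{[\gamma]}$ being independent of $p$, combined with the observation that $([\gamma_{[1,|\gamma|-1]}x])H$ is computed by reading $\gamma_{[1,|\gamma|-1]}x$ through $H$, whose last output letter depends only on the state forced by the already-read prefix; so the two descriptions of the target vertex cohere. That $\phi_H$ is a bijection on vertices follows because $H$ has finite order and acts on the set of classes $\{[\gamma]\}$ via $[\gamma] \mapsto [\gamma]H$, which is a permutation (its inverse being $[\gamma] \mapsto [\gamma]H^{-1}$, using $H^{-1} \in \hn{n}$ and the analogous construction for $H^{-1}$, or more directly from $\dual{H^{-1}}$ being $\dual{H}$ with arrows reversed); and on edges it is bijective because the label $\lambda_H(x,q_\gamma)$ ranges bijectively over $\xn$ as $x$ does, since each state of an invertible transducer induces a permutation of $\xn$.

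For part (2), the strategy is to show that $\mathscr{H}(\duA{H}, \phi_H^i)$ and $H^i$ induce the same continuous map on $\xnN$, so that by minimality (Proposition~\ref{prop:pntildeisinendo} and the uniqueness of minimal representatives for synchronous transducers) they have the same minimal representative in $\hn{n}$. It suffices to treat $i=1$: once $\mathscr{H}(\duA{H},\phi_H)$ is shown to represent $H$, the identity $\mathscr{H}(A,\psi)^i$ having the same minimal representative as $\mathscr{H}(A,\psi^i)$ — which one reads off from the gluing description of $\mathscr{H}$ in Section~\ref{sec:finiteByGraphAuto} — gives the general case, since $(H)^i = H^i$. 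To compare $\mathscr{H}(\duA{H},\phi_H)$ with $H$ on a point $x = \ldots x_{-2}x_{-1}x_0 \in \xnN$: by Lemma~\ref{lem:duAA_Strongly_Synchronizing} the automaton $\duA{H}$ is synchronizing at level $k$, with the word $w = x_{i-k}\ldots x_{i-1}$ forcing the state $[w] \in \duA{H}$ (viewing $w$ as an element of $\xn^k$, hence of $\xn^{\ge k}$, representing a class). The output letter $\lambda_{\mathscr{H}(\duA{H},\phi_H)}(x_i, [w])$ is, by definition of $\mathscr{H}$ and the edge formula in part (1), the label $\lambda_H(x_i, q_{[w]})$. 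On the other hand, $H$ itself, read as an element of $\hn{n}$ synchronizing at some level, computes the $i$-th output letter of $(x)\phi_H$ as $\lambda_H$ applied to $x_i$ at the state of $H$ forced by $x_{i-k'}\ldots x_{i-1}$ for a suitable $k'$; the content of the "very useful fact" from \cite{OlukoyaOrder} — namely that $q_{[\gamma]} \in Q_H$ is exactly the state of $H$ forced by (any long enough) $\gamma$ — identifies these two, so the outputs agree letter by letter and the maps coincide.

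The main obstacle I anticipate is the bookkeeping in part (1) around the two competing formulas for the image vertex $([\gamma_{[1,|\gamma|-1]}x])H$: one must check that reading $\gamma_{[1,|\gamma|-1]}x$ through $H$ and then taking the class, versus applying the vertex-permutation $[\,\cdot\,]\mapsto[\,\cdot\,]H$ directly, give the same class, and that this class is the correct endpoint $\pi_{\duA{H}}(\lambda_H(x,q_\gamma), ([\gamma])H)$ of the image edge — i.e. that $\phi_H$ really does commute with $\pi_{\duA{H}}$. This is where the precise form of the structure result for $\overline{\dual{H}_k}$ (the shift action on classes, and the $p$-independence of both transition and output) does all the work, and it needs to be invoked carefully rather than waved at; the rest is essentially a matter of tracing definitions. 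Once commutation with the transition map is in hand, the standard fact that a label-preserving digraph map commuting with the transitions of a core synchronizing automaton and bijective on vertices is a digraph automorphism completes part (1), and part (2) follows as sketched.
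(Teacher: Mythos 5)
The paper provides no proof of Theorem~\ref{thm:representations}; it is stated as a translation of Theorem~4.5 of \cite{BleakCameronOlukoyaI} (with $A(G)$ there corresponding to $\duA{H}$ here and $G$ there being $\langle H\rangle$), so there is no in-text argument to compare against. Your proposal is substantively correct and reconstructs the intended argument: verify that $\phi_H$ commutes with $\pi_{\duA{H}}$ using the $p$-independence of the transition and output functions of $\rdual{H}$; deduce bijectivity (on vertices because $H$ has finite order, and on outgoing edges at each vertex because $\lambda_H(\cdot,q_\gamma)$ permutes $\xn$); and then match $\mathscr{H}(\duA{H},\phi_H)$ and $H$ letter by letter via the respective synchronizing maps. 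Two points are worth tightening. First, the identification of $q_{[\gamma]}$ with the state of $H$ forced by any sufficiently long $\gamma$ is not really a consequence of the periodic-output fact from \cite{OlukoyaOrder} that you invoke; it is immediate from the definition of the dual, since $\dlambda_{H(k)}(q,\gamma)=\pi_{H}(\gamma,q)$, together with $H$ being synchronizing at level $k$. Second, your reduction of part~(b) to $i=1$ asserts that $\mathscr{H}(A,\psi)^{i}$ and $\mathscr{H}(A,\psi^{i})$ share a minimal representative; this is true but deserves a line of argument: by item~(b) of Remark~\ref{rem:representationProps}, a length-$k$ word forcing the state $q$ maps under $\lambda_{\mathscr{H}(A,\psi)}$ to a word forcing $(q)\psi$, so the core of the $i$-fold transducer product is the diagonal $\{(q,(q)\psi,\ldots,(q)\psi^{i-1}) : q\in Q_A\}$, and on that diagonal the transitions and outputs agree with those of $\mathscr{H}(A,\psi^{i})$.
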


Ahead of an illustrative example, we make the following note. A circuit of length $k$ in an automaton $A$ is carried by $k$ directed edges $(e_0,e_1,\ldots, e_{k-1})$ in $G_A$  where the target of $e_i$ is the source of $e_{i+1}$ (indices modulo $k$) for each index $i$.  In what follows, an automorphism $\phi$ of $G_A$ carries a circuit $\mathcal{C}$ to itself if and only if the image of each edge of $\mathcal{C}$ is itself under the automorphism.  Specifically, we do not consider a \textbf{``rotation'' of a circuit to be the circuit itself}.

\begin{example}\label{Example:minrepfromdual}
The transducer $H$ depicted in Figure \ref{fig:ExampleACallup} is  an element of $\hn{6}$ and bi-synchronizing at level 2.
\begin{figure}[htb]
\begin{center}
\scalebox{0.85}{
\begin{tikzpicture}[shorten >=0.5pt,node distance=3cm,on grid,auto] 
    \node[state] (q_0)   {$a_0$}; 
    \node[state] (q_1) [xshift=4cm,yshift=0cm] {$a_1$};
   \node[state] (q_3) [xshift=4cm,yshift=-3cm] {$a_2$}; 
   \node[state] (q_5)[xshift=8cm,yshift=-3cm] {$a_3$}; 
    \path[->] 
    (q_0) edge node {$1|4$ $0|5$} (q_1)
          edge [out=330,in=135]  node [xshift=-0.3cm,yshift=0.cm] {$2|0$} node  [xshift=0.3cm,yshift=-0.4cm] {$3|1$} (q_3)
          edge [out=50,in=85]  node [xshift=-0.2cm,yshift=-0.15cm] {$4|2$} node [xshift=0.5cm,yshift=-0.45cm] {$5|3$} (q_5)
    (q_1) edge  node [xshift=-0.5cm,yshift=0.5cm,swap]  {$4|2$} node [xshift=0.1cm,yshift=0.15cm,swap]  {$5|3$} (q_5)
          edge [in=50,out=40, loop] node [xshift=-0.8cm,yshift=-0.1cm,swap] {$0|4$} node [xshift=0cm,yshift=-0.4cm,swap] {$1|5$} ()
          edge  node [xshift=0cm,yshift=0.3cm,swap] {$2|1$} node [xshift=0cm,yshift=-0.3cm,swap]  {$3|0$} (q_3)
    (q_3) edge  node [xshift=0cm, yshift=0.15cm] {$1|4$}  node [xshift=0.5cm, yshift=-0.2cm] {$0|5$} (q_0)
          edge node [xshift=0.15cm, yshift=-0.1cm]  {$4|2$ $5|3$} (q_5) 
          edge [in=330,out=320, loop] node [xshift=-0.5cm,yshift=0.05cm,swap] {$2|1$} node [xshift=0.1cm, yshift=0.05cm,swap] {$3|0$} ()
    (q_5) edge [in=330,out=135]  node [xshift=-0.2cm,yshift=0cm,swap] {$1|5$} node [xshift=0.2cm, yshift=-0.4cm,swap] {$0|4$} (q_1)
          edge [out=185,in=355] node [xshift=0.1cm,yshift=0.05cm] {$2|0$ $3|1$} (q_3) 
          edge [in=330,out=320, loop] node [xshift=-0.5cm, yshift=0.05cm,swap] {$4|2$} node [xshift=0.2cm, yshift=0.05cm,swap] {$5|3$} ();
\end{tikzpicture}
}
\end{center}
\caption{An element $H \in \hn{6}$ of order $6$.}\label{fig:ExampleACallup}
\end{figure}
 
The level 2 dual has 36 states but only $6$ up-to $\omega$-equivalence. This can be verified either by direct calculation or in GAP \cite{GAP4} using  the AutomGrp package \cite{AutomGrp1.3.2} and its function ``MinimizationOfAutomaton( )'', which returns the minimal $\omega$-equivalent automaton to the input. Figure \ref{fig:zeroOfDual} shows the automaton $\overline{\dual{H}_{2}}$. This is in fact the zero of the semigroup $\{\dual{H}_{k}: k \in \N_{1}\}$).

\begin{figure}[htb]
\begin{center}
\scalebox{0.85}{
\begin{tikzpicture}[shorten >=0.5pt,node distance=3cm,on grid,auto] 
  \begin{scope}[xshift=0cm]
      \node[state] (q_0)   {$q_0$}; 
   \node[state] (q_1) [below left=of q_0] {$q_1$}; 
   \node[state] (q_2) [below right=of q_0] {$q_2$}; 
    \path[->] 
    (q_0) edge [bend right] node [swap] {$\ast|a_1$} (q_1)
    (q_1) edge [bend right] node [swap]  {$\ast|a_3 $} (q_2)
    (q_2) edge [bend right] node [swap] {$\ast|a_2 $} (q_0);
  \end{scope}
  \begin{scope}[xshift=6cm]
      \node[state] (q_0)   {$p_0$}; 
   \node[state] (q_1) [below left=of q_0] {$p_1$}; 
   \node[state] (q_2) [below right=of q_0] {$p_2$}; 
    \path[->] 
    (q_0) edge [bend right] node [swap] {$\ast|a_0$} (q_1)
    (q_1) edge [bend right] node [swap]  {$\ast|a_3 $} (q_2)
    (q_2) edge [bend right] node [swap] {$\ast|a_2 $} (q_0);
  \end{scope}
   
\end{tikzpicture}
}
\end{center}
\caption{The level 2 dual of $H$.}
\label{fig:zeroOfDual}
\end{figure}
\end{example}

The states $\{q_0,q_1, q_2, p_0, p_1, p_2\}$ of $\overline{\dual{H}_2}$ partition the $X_{6}^2$ as in Table~\ref{tab:Reduceddual}. One way to see this is to notice that the action of $H$ on the sets is as depicted in Figure~\ref{fig:zeroOfDual}.  Using Table~$\ref{tab:Reduceddual}$, it is straightforward to construct the automaton $A:=\Duak{H}{2}$ which is depicted in Figure~\ref{fig:Exmdigraph} (note that each drawn edge represents two edges with labels as listed). The automorphism $\phi_H$ of the underlying digraph of $A$ can be determined using Theorem~\ref{thm:representations}; in practise it can be more easily constructed using the observation that $\mathscr{H}(A, \phi_H)$ has minimal representative $H$. In particular, on the vertices on $G_A$, $\phi_H$ induces  the permutation which in cycle notation is 
$$ (p_0 \; p_1 \; p_2 ) (q_0 \; q_1\; q_2 ).$$
 We refer to the vertices $q_0, q_1, q_2$ as the vertices of the ``inner triangle'' and the vertices $p_0, p_1, p_2$ as the vertices of the ``outer triangle''.
 
\begin{table}[H]
\begin{align*}
q_0&=\{00,01,10,11,40,41,50,51\} \qquad \quad p_0=\{20,21,30,31\}\\
q_1&=\{24,25,34,35,44,45,54,55\} \qquad \quad p_1=\{04,05,15,15\} \\
q_2&=\{02,03,12,13,22,23,32,33\} \qquad \quad p_2=\{42,43,52,53\}.
\end{align*}
\caption{Partition of $X_6^2$ induced by states of 
$\dual{H}_{2}$.} \label{tab:Reduceddual}
\end{table}
\noindent %
\begin{figure}[htb]
\begin{center}
\scalebox{0.8}{
\begin{tikzpicture}[shorten >=0.5pt,node distance=3cm,on grid,auto] 
\begin{scope}[scale=0.8]
    \node[state] (q_0)   {$p_0$}; 
    \node[state] (q_1) [xshift=4cm,yshift=0cm] {$q_0$};
   \node[state] (q_3) [xshift=4cm,yshift=-3cm] {$q_2$}; 
   \node[state] (q_5)[xshift=8cm,yshift=-3cm] {$q_1$}; 
   \node[state] (q_4)[xshift=8cm,yshift=0cm] {$p_1$};
   \node[state] (q_2)[xshift=8cm,yshift=-6cm] {$p_2$};
    \path[->] 
    (q_0) edge node {$1$ \quad $0$} (q_1)
          edge [out=330,in=135]  node [xshift=-0.3cm,yshift=0.cm] {$2$} node  [xshift=0.3cm,yshift=-0.4cm] {$3$} (q_3)
          edge [out=90,in=90]  node [xshift=-0.2cm,yshift=0.15cm] {$4$} node [xshift=0.5cm,yshift=0.15cm] {$5$} (q_4)
    (q_1) edge [in=170,out=10]  node [xshift=-0.5cm,yshift=0.5cm,swap]  {$4$} node [xshift=0.1cm,yshift=0.5cm,swap]  {$5$} (q_4)
          edge [in=95,out=85, loop] node [xshift=-0.4cm,yshift=-0.3cm,swap] {$0$} node [xshift=0.4cm,yshift=-0.3cm,swap] {$1$} ()
          edge  node [xshift=0cm,yshift=0.3cm,swap] {$2$} node [xshift=0cm,yshift=-0.3cm,swap]  {$3$} (q_3)
    (q_3) edge  node [xshift=0cm, yshift=0.15cm] {$1$}  node [xshift=0.5cm, yshift=-0.2cm] {$0$} (q_0)
          edge node [xshift=0.15cm, yshift=-0.1cm]  {$4$ \quad $5$} (q_5) 
          edge [in=275,out=265, loop] node [xshift=-0.3cm,yshift=0.2cm,swap] {$2$} node [xshift=0.3cm, yshift=0.2cm,swap] {$3$} ()
    (q_5) edge [in=330,out=135]  node [xshift=-0.2cm,yshift=0cm,swap] {$1$} node [xshift=0.2cm, yshift=-0.4cm,swap] {$0$} (q_1)
          edge [out=260,in=100] node [xshift=0cm,yshift=-0.4cm,swap] {$2$} node [xshift=0cm,yshift=0.4cm,swap] {$3$} (q_2) 
          edge [in=355,out=5, loop] node [xshift=0cm, yshift=0.3cm,swap] {$4$} node [xshift=0cm, yshift=-0.3cm,swap] {$5$} ()
    (q_4) edge [in=0,out=0]  node [xshift=0cm,yshift=-0.4cm] {$2$} node [xshift=0cm, yshift=0.4cm] {$3$} (q_2)
          edge  node [xshift=0cm,yshift=-0.4cm] {$5$} node [xshift=0cm,yshift=0.4cm] {$4$} (q_5) 
          edge node [xshift=-0.5cm, yshift=0.05cm] {$1$} node [xshift=0.2cm, yshift=0.05cm] {$0$} (q_1)
    (q_2) edge [in=225,out=225]  node [xshift=-0.4cm,yshift=0.2cm] {$0$} node [xshift=0.4cm, yshift=-0.2cm] {$1$} (q_0)
          edge  node [xshift=-0.4cm,yshift=0.33cm] {$3$} node [xshift=0.4cm,yshift=-0.1cm] {$2$} (q_3) 
          edge node [xshift=0cm, yshift=-0.4cm,swap] {$4$} node [xshift=0cm, yshift=0.4cm,swap] {$5$} (q_5);
\end{scope}
\end{tikzpicture}
}
\end{center}
\caption{The automaton $\Duak{H}{2}$}\label{fig:Exmdigraph}
\end{figure}
Notice that both the domain and range automaton of $H$ are foldings of $A$. This phenomenon generalises, that is, for a synchronizing transducer $H$ representing an element of $\hn{n}$ of finite order, both the domain and range automata of $H$ are foldings of $\duA{H}$. 

We claim that $H$ is an element of order $6$, and in fact, in its action on the Cantor space $X_6^{-\N}$, all points are on orbits of length $6$.  It is enough to see that every circuit of $G_A$ is on orbit of length $6$ (any sufficiently long path must then be on an orbit of length $6$). This can be seen as follows.  A circuit of $G_A$ which is not formed by repeating the circuit (or a cyclic rotation of it) $p_0 \to p_1 \to p_2 \to p_0$ a finite number of times, must have an edge leaving a vertex in the inner triangle --- any such edge has orbit length $6$, and any circuit $p_0 \to p_1 \to p_2 \to p_0$ also has orbit length $6$. Therefore $H$ satisfies one of the equivalent conditions in the statement of Theorem~\ref{thm:mainresultIntro}.

We revisit this example throughout the course of this article (Sections~\ref{sec:application1}, \ref{Sec:Shadowstates} and \ref{sec:conclusion}), indeed after every major construction,  at the conclusion of which we will have demonstrated that $H$ is conjugate to a $6$-cycle.
$\bigcirc$

\subsubsection{\texorpdfstring{$\Duak{H}{k}$}{Lg} is minimal}

In this section, we prove that $\duA{H}$ is the unique (up-to automata isomorphism) minimal support for $H$. By minimal we mean that there is no synchronizing automaton $A$ which is a folding of $\duA{A}$ and supports $H$.

Our first step is the following useful lemma about the automaton $\duA{H}$.  It says that if two states $[\delta]$ and $[\gamma]$ are distinct in $\duA{H}$ but their two transition functions are the same, then by following the cycles of the level $k$ dual (by iteratively acting by $H$), we will eventually get to a pair of states which have different output letters in the level $k$ dual, and at that pair of locations, the states of $\duA{H}$ will still transition the same way, but the output functions of $\mathscr{H}(\duA{H},\phi_H)$ at these states will disagree on $\xn$.

\begin{lemma}\label{Lemma:findingdisagreement}
    Let $H\in \hn{n}$ be finite order.
	Let $\gamma, \delta \in \xn^{k}$ be such that the states $[\gamma], [\delta]$ of $\duA{A}$ are distinct. Suppose moreover that the maps $\pi_{\duA{H}}(\cdot, [\gamma])$ and $\pi_{\duA{H}}(\cdot, [\delta])$ coincide. 
	Then there is a natural $i$ with $0\leq i<o(H)$ and $x, y, y' \in \xn$ such that $y \ne y'$, $\pi_{\duA{H}}(\cdot, [\gamma]H^{i}) = \pi_{\duA{H}}(\cdot, [\delta]H^{i})$ but $H$ maps the edges $$([\gamma]H^{i},x, \pi_{\duA{H}}(x, [\gamma]H^{i})) \textrm{ and } ([\delta]H^{i},x, \pi_{\duA{H}}(x, [\delta]H^{i}))$$ respectively to the edges $$([\gamma]H^{i+1},y, \pi_{\duA{H}}(y, [\gamma]H^{i+1})) \textrm{ and }([\delta]H^{i+1},y', \pi_{\duA{H}}(y', [\delta]H^{i+1})).$$ 
	\end{lemma}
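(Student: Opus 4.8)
The plan is to leverage three structural facts about $\duA{H}$ and its automorphism $\phi_{H}$ (recall $[\mu]H=[\mu]\phi_{H}$). \textbf{(F1)} By Theorem~\ref{thm:representations}, $\phi_{H}$ sends the edge $([\mu],x,\pi_{\duA{H}}(x,[\mu]))$ to the edge leaving $[\mu]H$ with label $\lambda_{H}(x,q_{[\mu]})$, and each $\lambda_{H}(\cdot,q)$ is a bijection of $\xn$; equivalently, $\pi_{\duA{H}}(\lambda_{H}(x,q_{[\mu]}),[\mu]H)=\pi_{\duA{H}}(x,[\mu])H$. \textbf{(F2)} $\rdual{H}$ is minimal, and the word it outputs at state $[\mu]$ on any length-$m$ input is $q_{[\mu]}\,q_{[\mu]H}\cdots q_{[\mu]H^{m-1}}$ (each transition applies $\phi_{H}$ and outputs $q_{[\cdot]}$). \textbf{(F3)} The map $[\mu]\mapsto q_{[\mu]}$ is the folding of $\duA{H}$ onto the domain automaton of $H$ (see the text following Example~\ref{Example:minrepfromdual}), so it commutes with transitions: $q_{\pi_{\duA{H}}(x,[\mu])}=\pi_{H}(x,q_{[\mu]})$ for all states $[\mu]$ and all $x\in\xn$.

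First I would record a \emph{propagation lemma}: if states $[\mu],[\nu]$ of $\duA{H}$ satisfy $\pi_{\duA{H}}(\cdot,[\mu])=\pi_{\duA{H}}(\cdot,[\nu])$ and $\lambda_{H}(\cdot,q_{[\mu]})=\lambda_{H}(\cdot,q_{[\nu]})$ --- in particular if $q_{[\mu]}=q_{[\nu]}$ --- then $\pi_{\duA{H}}(\cdot,[\mu]H)=\pi_{\duA{H}}(\cdot,[\nu]H)$. This follows from (F1): the identities $\pi_{\duA{H}}(\lambda_{H}(x,q_{[\mu]}),[\mu]H)=\pi_{\duA{H}}(x,[\mu])H$ and $\pi_{\duA{H}}(\lambda_{H}(x,q_{[\nu]}),[\nu]H)=\pi_{\duA{H}}(x,[\nu])H$ have equal right-hand sides and equal inputs $\lambda_{H}(x,q_{[\mu]})=\lambda_{H}(x,q_{[\nu]})$, and $x\mapsto\lambda_{H}(x,q_{[\mu]})$ is onto $\xn$.

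Second, I would pin down the index. Since $[\gamma]\ne[\delta]$ are distinct states of the minimal transducer $\rdual{H}$, they are not $\omega$-equivalent there, so by (F2) there is $i\ge 0$ with $q_{[\gamma]H^{i}}\ne q_{[\delta]H^{i}}$. As $\phi_{H}$ has order $o(H)$ --- a label-preserving automorphism of a core synchronizing automaton fixes every state and edge, so $H^{o(H)}=\id$ forces $\phi_{H}^{o(H)}=\id$ --- the sequences $(q_{[\gamma]H^{i}})_{i\ge0}$ and $(q_{[\delta]H^{i}})_{i\ge0}$ are periodic with period dividing $o(H)$, so we may take the least such $i$ to lie in $\{0,\dots,o(H)-1\}$; call it $i_{0}$. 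For $0\le i<i_{0}$ we have $q_{[\gamma]H^{i}}=q_{[\delta]H^{i}}$, so starting from $\pi_{\duA{H}}(\cdot,[\gamma])=\pi_{\duA{H}}(\cdot,[\delta])$ and applying the propagation lemma repeatedly yields $\pi_{\duA{H}}(\cdot,[\gamma]H^{i})=\pi_{\duA{H}}(\cdot,[\delta]H^{i})$ for every $0\le i\le i_{0}$.

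Finally, I would upgrade ``the output letters differ at $i_{0}$'' to ``the local permutations differ at $i_{0}$'', which is the real content. Put $p=q_{[\gamma]H^{i_{0}}}$ and $p'=q_{[\delta]H^{i_{0}}}$, so $p\ne p'$ in $Q_{H}$. By (F3) and $\pi_{\duA{H}}(\cdot,[\gamma]H^{i_{0}})=\pi_{\duA{H}}(\cdot,[\delta]H^{i_{0}})$ we get $\pi_{H}(x,p)=q_{\pi_{\duA{H}}(x,[\gamma]H^{i_{0}})}=q_{\pi_{\duA{H}}(x,[\delta]H^{i_{0}})}=\pi_{H}(x,p')$ for every $x\in\xn$. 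If also $\lambda_{H}(\cdot,p)=\lambda_{H}(\cdot,p')$, then reading any letter from $p$ and from $p'$ gives the same output and lands at the same state, so $p$ and $p'$ induce the same continuous self-map of $\xnN$; minimality of $H$ then forces $p=p'$, a contradiction. Hence there is $x\in\xn$ with $y:=\lambda_{H}(x,p)\ne\lambda_{H}(x,p')=:y'$, and by (F1) $\phi_{H}$ carries $([\gamma]H^{i_{0}},x,\pi_{\duA{H}}(x,[\gamma]H^{i_{0}}))$ and $([\delta]H^{i_{0}},x,\pi_{\duA{H}}(x,[\delta]H^{i_{0}}))$ to edges with labels $y,y'$ based at $[\gamma]H^{i_{0}+1},[\delta]H^{i_{0}+1}$ respectively --- the assertion for $i=i_{0}$. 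I expect this last upgrade to be the main obstacle: the hypothesis controls only the \emph{transition} functions of $\duA{H}$, whereas the conclusion concerns the \emph{output} functions of $\mathscr{H}(\duA{H},\phi_{H})$, and bridging them needs exactly (F3) together with the minimality of $H$.
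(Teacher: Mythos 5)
Your proof is correct and takes essentially the same approach as the paper: you locate the first index $i_0$ at which the output sequences $(q_{[\gamma]H^{j}})_{j}$ and $(q_{[\delta]H^{j}})_{j}$ of the level-$k$ dual diverge (this is exactly the length $|u|$ of the common prefix $u$ of $W([\gamma])$ and $W([\delta])$ in the paper's proof), propagate the agreement of the transition functions of $\duA{H}$ along the orbits up to $i_0$ (the paper's induction on $j\le i-1$), and then use minimality of $H$ to upgrade $q_{[\gamma]H^{i_0}}\ne q_{[\delta]H^{i_0}}$ with equal transition maps into disagreement of their local output permutations. If anything your writeup is a touch more careful about which index and which input letter are produced at the end.
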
 
	\begin{proof}
	Let $w = W([\gamma])$ and $v=W([\delta])$. Since $[\gamma] \ne [\delta]$, we may find words $u, w_2, v_2 \in Q_{H}^{\ast}$ and letters  $t \ne t' \in Q_{H}$ such that $w= u t w_2$ and $v = u t' v_2$. Set $i-1 := |u|$. 
	We note that for any $ j \in \N$ with $j \le i-1$, a straightforward induction argument shows, the edges $([\gamma], a, \pi_{\duA{H}}(a, [\gamma])H)$ and $([\delta], a, \pi_{\duA{H}}(q, [\delta]))$ map respectively under $H^{j}$ to edges $([\gamma]H^{j}, b, \pi_{\duA{H}}(a, [\gamma])H^{j})$ and $([\delta]H^{j}, b, \pi_{\duA{H}}(q, [\delta])H^{j})$, where $b$ = $\lambda_{H^{j}}(a, u_{[1,j]})$ (if $j=0$, take $b = a$). In particular it follows that $\pi_{H}(\cdot, t) = \pi_{H}(\cdot, t')$ and so, since $t \ne t'$, there is an   $a \in \xn$ be such that $y:=\lambda_{H}(a, t) \ne \lambda_{H}(a, t') = y'$. Let $x \in \xn$ be such that $\lambda_{H^{i-1}}(x, u) = a$. 
	Then it follows that the edges $$([\gamma]H^{i},x, \pi_{\duA{H}}(x, [\gamma]H^{i})) \mbox{ and }  ([\delta]H^{i},x, \pi_{\duA{H}}(x, [\delta]H^{i}))$$ are mapped respectively under $H$, to the edges $$([\gamma]H^{i+1},y, \pi_{\duA{H}}(y, [\gamma]H^{i+1})) \mbox{ and } ([\delta]H^{i+1},y', \pi_{\duA{H}}(y', [\delta]H^{i+1})).$$
\end{proof}

In the next lemma, we show that there is precisely one minimal automaton (up to isomorphism of automata) supporting a finite order element $H$ of $\hn{n}.$ 

\begin{prop}\label{Lemma:minimalityofdualaut}
	Let $H \in \hn{n}$ be an element of finite order. Then (up to isomorphism of automata) $\duA{H}$ is the minimal synchronizing automaton supporting $H$.  Furthermore, $\phi$ is the automorphism $\phi_H$ of Theorem \ref{thm:representations}.
\end{prop}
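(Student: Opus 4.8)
The plan is to show that $\duA{H}$ is minimal by proving that it admits no proper folding that still supports $H$, and that the automorphism realising $H$ on it must be $\phi_H$. I would begin with the second, easier claim: if $A'$ is any synchronizing automaton with a digraph automorphism $\phi'$ so that $\mathscr{H}(A',\phi')$ has minimal representative $H$, then Remark~\ref{rem:representationProps} tells us $\mathscr{H}(A',\phi')$ and $\mathscr{H}(A',\phi')^{-1}$ are synchronizing at the common level of $A'$, and the action of $\phi'$ on the core of $A'$ is forced by the action of $H$ on periodic points (equivalently, by the level $k$ dual structure). In particular, since the transition function of $\duA{H}$ at a state $[\gamma]$ was literally read off from $\overline{\dual{H}_k}$ and the output of $\mathscr{H}(\duA{H},\phi_H)$ from the distinguished letters $q_{[\gamma]}$, any realising automorphism of $G_{\duA{H}}$ inducing $H$ must coincide with $\phi_H$ on edges; this gives the "furthermore" clause once we know $\duA{H}$ itself supports $H$, which is Theorem~\ref{thm:representations}.

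For minimality proper, suppose toward a contradiction that there is a nontrivial folding $\equiv$ of $\duA{H}$ whose folded automaton $A' = \duA{H}/{\equiv}$ is still synchronizing (at some level, possibly larger) and still supports $H$, realised by the induced automorphism $\bar\phi$ of $G_{A'}$. A nontrivial folding identifies two distinct states $[\gamma] \ne [\delta]$ of $\duA{H}$; since a folding must respect transitions, the maps $\pi_{\duA{H}}(\cdot,[\gamma])$ and $\pi_{\duA{H}}(\cdot,[\delta])$ must agree after passing to $\equiv$-classes, and by iterating the folding closure one can arrange (or directly use) that there is such a pair with $\pi_{\duA{H}}(\cdot,[\gamma])$ and $\pi_{\duA{H}}(\cdot,[\delta])$ literally equal as maps into $Q_{\duA{H}}$ — this is exactly the hypothesis of Lemma~\ref{Lemma:findingdisagreement}. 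Here I would need to be a little careful: a folding need not immediately produce states whose transition maps are equal on the nose rather than modulo $\equiv$, so the argument should either (i) choose the folding class minimally/cleverly, or (ii) restate Lemma~\ref{Lemma:findingdisagreement} to allow "equal modulo $\equiv$"; I expect option (ii), tracking the folding throughout Lemma~\ref{Lemma:findingdisagreement}'s induction, is the cleanest route.

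Granting the setup, Lemma~\ref{Lemma:findingdisagreement} now furnishes an index $i$ with $0 \le i < o(H)$, a letter $x \in \xn$, and letters $y \ne y'$ such that $H$ maps the two edges out of $[\gamma]H^i$ and $[\delta]H^i$ on input $x$ to edges with distinct output labels $y$ and $y'$ — while the underlying transition behaviour at $[\gamma]H^i$ and $[\delta]H^i$ remains identical (again modulo $\equiv$, since $\equiv$ is $\phi_H$-invariant as the folding must be compatible with the realising automorphism). But in $A'$ the states $[\gamma]H^i$ and $[\delta]H^i$ are identified into a single state $r$; then the transducer $\mathscr{H}(A',\bar\phi)$ reading $x$ from state $r$ would have to output both $y$ and $y'$, contradicting that $\mathscr{H}(A',\bar\phi)$ is a well-defined (invertible, synchronous) transducer. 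Hence no nontrivial folding supporting $H$ exists, so $\duA{H}$ is the minimal such automaton, and it is unique up to automaton isomorphism because any other minimal support would have to be a common folding target, forcing an isomorphism.

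The main obstacle I anticipate is the bookkeeping around foldings versus on-the-nose equality of transition maps: Lemma~\ref{Lemma:findingdisagreement} as stated assumes $\pi_{\duA{H}}(\cdot,[\gamma]) = \pi_{\duA{H}}(\cdot,[\delta])$ exactly, whereas a folding only gives equality after quotienting. Resolving this — either by showing one may always reduce to an on-the-nose collision by choosing a suitable pair in the folding, or by carrying the equivalence relation through the inductive argument in Lemma~\ref{Lemma:findingdisagreement} so that the "disagreement in output letters" conclusion survives the quotient — is the crux. Everything after that (deriving the contradiction with well-definedness of the folded transducer, and upgrading "no proper folding" to "unique minimal up to isomorphism") is routine.
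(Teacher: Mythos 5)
Your plan overlaps with the paper's proof in using Lemma~\ref{Lemma:findingdisagreement} to derive a contradiction, and the ``Furthermore'' clause is handled reasonably. But there is a genuine gap, and a structural difference in how the argument is set up, which together make the proposal incomplete.

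The paper does \emph{not} begin from a folding of $\duA{H}$. It fixes an arbitrary \emph{minimal} support $A$ (minimal means $A$ has no proper folding supporting $H$) and then shows $A \cong \duA{H}$, by a two-case argument. Case~1: if every state of $A$ refines a state of $\duA{H}$ (meaning the length-$l$ synchronizing blocks of $A$ nest inside those of $\duA{H}$), then $\duA{H}$ is itself a folding of $A$, and minimality of $A$ forces $A \cong \duA{H}$. Case~2: otherwise there are $\gamma,\delta$ of length $l$ which force the \emph{same} state of $A$ but \emph{distinct} states $[\gamma]\ne[\delta]$ of $\duA{H}$, and this is what feeds into Lemma~\ref{Lemma:findingdisagreement}. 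Your setup starts from a folding $\equiv$ of $\duA{H}$; this only rules out proper foldings of $\duA{H}$ supporting $H$, and does not show that $\duA{H}$ is a folding of \emph{every} support. Your closing sentence about ``any other minimal support would have to be a common folding target'' is exactly what needs proof — it is not automatic that the meet of two supports in the folding lattice still supports $H$ — and it is precisely what the paper's Case~1 delivers. Without an analogue of Case~1 the uniqueness-up-to-isomorphism claim is unproven.

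On the ``on-the-nose equality'' issue: you correctly identify the gap, but leave it unresolved and even suggest preferring option~(ii) (carrying $\equiv$ through Lemma~\ref{Lemma:findingdisagreement}). The paper takes option~(i), and it is actually short: because $\duA{H}$ is synchronizing at level $k$, the set of words $\nu$ with $\pi_{\duA{H}}(\nu,[\gamma]) \ne \pi_{\duA{H}}(\nu,[\delta])$ is contained in $\xn^{<k}$, so one can take $\nu$ of maximal length with this property; the resulting pair $([\gamma'],[\delta']) = (\pi_{\duA{H}}(\nu,[\gamma]),\pi_{\duA{H}}(\nu,[\delta]))$ is distinct yet has identical transition maps on $\xn$, and the collision in the competing automaton persists (since forcing is preserved under appending $\nu$). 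You should execute this explicitly rather than defer it; the inductive step in Lemma~\ref{Lemma:findingdisagreement} relies on genuine equality of the transition maps, and the argument that pushes the outputs of $\mathscr{H}(A',\cdot)$ into conflict at a single state does too.

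Finally, one further subtlety the paper addresses that you elide: in Case~2 one must propagate the collision along the $\phi_H$-orbit, verifying at each step that $\pi_{\duA{H}}(\cdot,([\gamma])H^{i}) = \pi_{\duA{H}}(\cdot,([\delta])H^{i})$ and that the edge images under $H$ agree, before Lemma~\ref{Lemma:findingdisagreement} can be invoked to produce a contradiction. You assert this as ``the folding is $\phi_H$-invariant,'' which is in the right spirit, but it deserves the short induction the paper gives, since it is the bridge between ``collision in $A$'' and the hypothesis of that lemma.
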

\begin{proof}
	Let $H\in \hn{n}$ be finite order of order $o(H)$.  We note that by results in \cite{OlukoyaOrder, BleakCameronOlukoyaI} $k$ is minimal such that all of the elements $H, H^{2}, \ldots, H^{o(H)-1}$ are synchronizing at level $k$ ($H^{i}$ being the product in $\hn{n}$ of $H$ with itself $i$ times).
	
	It follows from Theorem \ref{thm:representations} that $\duA{H}$ is an automaton supporting $H$ and indeed that $(\duA{H},\phi_H)$ induces $H$.  We argue below that $\duA{H}$ is a minimal such automaton: it is (automata isomorphic to) a folding of any synchronizing support of $H$.

	Suppose $A$ is a minimal support of $H$ realised by an automorphism $\psi_H$ of $G_A$.  We note that the minimal synchronizing level $l$ of $A$ is greater than or equal to $k$ since $\mathscr{H}(A, \psi_{H}^{i})$ is synchronizing at level $l$ and has minimal representative $H^{i}$.

	Suppose for a contradiction that $A$ is not equal  (i.e isomorphic as an automaton) to $\duA{H}$. 
	There are two cases. 
	
	Firstly for any state $q \in Q_{A}$, there is a state $p \in Q_{H}$ such that the set $W(q,l)$ of words of length $l$ which force $q$ is contained in the set $W(p,l)$ of words of length $l$ which force the state $p$ of $\duA{H}$. In this case, one observes that $\duA{H}$ is (isomorphic to) a folding of $A$ contradicting the minimality of $A$.

	The other case is the negation of the first. We assume that there is a pair of words $\gamma, \delta \in  \xn^{l}$ such that the state of $A$ forced by $\gamma$ is the same as the state of $A$ forced by  $\delta$ but $\gamma$ and $\delta$ force different states $[\gamma], [\delta]$ of $\duA{H}$. We may further assume that the states $[\gamma], [\delta]$ of $\duA{H}$ also satisfy $\pi_{\duA{H}}(\cdot, [\gamma]) = \pi_{\duA{H}}(\cdot, [\delta])$. This is because if $\pi_{\duA{H}}(\cdot, [\gamma]) \ne \pi_{\duA{H}}(\cdot, [\delta])$, then we may find a word $\nu \in \xnp$ such that $[\gamma'] := \pi_{\duA{H}}(\nu, [\gamma]) \ne \pi_{\duA{H}}(\nu, [\delta]) = [\delta']$, satisfy $\pi_{\duA{H}}(\cdot, [\gamma']) = \pi_{\duA{H}}(\cdot, [\delta]')$. Thus $\gamma\nu$ and $\delta \nu$ force the same state of $A$ but force, respectively, the states  $[\gamma']$ and $[\delta']$ of $\duA{H}$. We therefore replace $\gamma, \delta$ with $\gamma', \delta'$.
	
	Let $z_1$ be the state of $A$ forced by $\gamma$ and $\delta$ and let $z_1, z_2, \ldots, z_{o(H)}$ be the orbit of $z_1$ under the action $\psi_H$ of $H$. As $\mathscr{H}(A, \psi_{H}) = \mathscr{H}(\duA{H}, \phi_{H})$,  it must be the case that if $a, b \in \xn$ are such that the edge $([\gamma],a, [\Gamma])$ maps to $(([\gamma])H,b, ([\Gamma])H)$, then the edge $([\delta],a, [\Gamma])$ also maps to $(([\delta])H,b, ([\Gamma])H)$. Thus we conclude that $\pi_{\duA{H}}(\cdot,([\gamma])H) = \pi_{\duA{H}}(\cdot, ([\delta])H)$. Now observe that since $\gamma$ and $\delta$ are representatives of $[\gamma]$ and $[\delta]$, respectively, and since for any $q \in Q_{A}$, the state of $A$ forced by $\lambda_{H}(\gamma, q)$ is equal to the state of $A$ forced by $\lambda_{H}(\delta, q)$ is equal to $z_2$, it follows that there are  representatives of $([\gamma])H$ and $([\delta])H$ respectively such that the states of $A$ forced by these representative is $z_2$. We may thus repeat the argument in the $z_1$ case. By induction we therefore see that for any $1 \le i \le o(H)$,  the points $([\gamma])H^{i}$ and $([\delta])H^{i}$ satisfy the following:
 \begin{itemize}
     \item $\pi_{\duA{H}}(\cdot, ([\gamma])H^{i} ) = \pi_{\duA{H}}(\cdot, ([\delta])H^{i})$, and,
     \item whenever there are  $a,b \in \xn$, such that $(([\gamma])H^{i}, a, \nu)$ is an edge mapping under $H$ to the edge $(([\gamma])H^{i+1}, b (\nu)H$, then the edge $(([\delta])H^{i}),a, \nu)$ also maps under $H$ to $(([\delta])H^{i+1}),b,(\nu)H)$.
 \end{itemize}
  This contradicts Lemma~\ref{Lemma:findingdisagreement}. 
\end{proof}

\subsubsection{Building \texorpdfstring{$\duA{H}$}{Lg} on the fly}

Having established the uniqueness of $\duA{H}$, we introduce the following ad hoc method for constructing it.

Let $H \in \hn{n}$. Build an automaton $A$ inductively as follows:

\begin{enumerate}[label=Step \arabic*.]
    \item Set $Q_{A} = \{ q \in Q_A^{\ast} : q \text{ is prime and } \exists x \in \xn, \pi_{H^{|q|}}(x,q) = q\}$;
    \item For each $q \in Q_{A}$ do the following. For $x \in X_n$ let $m \in \N$ be the length of the cycle containing $x$ in the permutation $\lambda_{H^{|q|}}(\ast, q): \xn \to \xn$. Set $p' := \pi_{H^{m|q|}}(x, q^{m})$. Let $p \in Q_H^{\ast}$ be the prime word such that $p'$ is a power of $p$. If $p$ is not in $Q_{A}$ already, add it in. Add a transition $\pi_{A}(x, q) = p$.
    \item Repeat Step $2$.
\end{enumerate}

We have the following lemma.

\begin{lemma}
    Let $H \in \hn{n}$ be an element of finite order and let $A$ be the automaton constructed as above. Then $A$  is isomorphic as an automaton to $\duA{A}$. Indeed the map $\iota: Q_{\duA{H}} \to Q_{A}$ by $[\gamma] \mapsto W([\gamma])$ induces an automaton isomorphism from $\duA{H}$ to $A$.
\end{lemma}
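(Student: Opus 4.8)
The plan is to show that the ad hoc construction is just an explicit unwinding of the definition of $\duA{H}$, with the map $\iota\colon[\gamma]\mapsto W([\gamma])$ providing the identification. First I would check that $\iota$ is well-defined: recall from the \textbf{very useful fact} quoted from \cite{OlukoyaOrder} that for each state $[\gamma]$ of $\rdual{H}$ there is a word $W([\gamma])\in Q_H^+$ governing all outputs from $[\gamma]$, and $W([\gamma])$ may be taken to be prime (otherwise replace it by the prime word it is a power of, which induces the same output behaviour since outputs from $[\gamma]$ are periodic with that word). So $\iota$ lands in $Q_H^{\ast}$ and its image consists of prime words. I would then verify that the image of $\iota$ is exactly the state set $Q_A$ produced by Step~1 together with the words accreted in Step~2, and that $\iota$ intertwines the transition functions $\pi_{\duA{H}}$ and $\pi_A$; injectivity of $\iota$ is then immediate from Proposition~\ref{Lemma:minimalityofdualaut} (distinct states of the minimal automaton cannot collapse), giving that $\iota$ is an isomorphism of automata and hence, by that same proposition, $A\cong\duA{H}$.

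The substantive computation is the Step~2 transition rule. Fix a prime word $q$ with $\pi_{H^{|q|}}(x,q)=q$ for some $x$, i.e. $q$ is a state of $\duA{H}$ in the guise $[\gamma]$ with $W([\gamma])=q$ (here I use that a state of $\duA{H}$ is exactly a prime self-forcing word in the appropriate sense; this follows by combining Lemma~\ref{lem:duAA_Strongly_Synchronizing}, the structural description of $\overline{\dual{H}_k}$, and the identification of $[\gamma]$ with $W([\gamma])$). For a letter $x\in\xn$, let $m$ be the length of the cycle of $x$ in the permutation $\lambda_{H^{|q|}}(\ast,q)$. The point is that reading the letter $x$ once in $\duA{H}$ moves $[\gamma]=[q^{\omega}\text{-type word}]$ to $\pi_{\duA{H}}(x,[\gamma])$, and I want to identify the latter's associated word $W(\pi_{\duA{H}}(x,[\gamma]))$. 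The key is that $\pi_{\duA{H}}(x,[\gamma])H = [\gamma]H$'s transition, and by the structural fact $\pi_{\overline{\dual{H}_k}}(p,[\gamma])=[\gamma]H$ for every $p\in Q_H$; iterating, following the $x$-transition and tracking through $m$ applications of $H$ (equivalently, reading the fixed word $q$ repeated $m$ times and chasing $x$ around its cycle) returns to a word forcing the same state, namely $\pi_{H^{m|q|}}(x,q^m)$, whose prime root $p$ is the required target state $W(\pi_{\duA{H}}(x,[\gamma]))$. I would write this out as a short induction on the steps within the $x$-cycle, using the commutation relations $\pi_{\overline{\dual{H}_k}}(p,[\gamma])=[\gamma]H$ and $\lambda_{\overline{\dual{H}_k}}(p,[\gamma])=q_{[\gamma]}$ repeatedly.

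The main obstacle I anticipate is precisely the bookkeeping in that last paragraph: reconciling the ``paths to letters'' indexing (states of $\dual{H}_m$ are words of length $m$, and $\duA{H}$ is built from the stabilised dual $\rdual{H}$) with the prime-word representatives used in the ad hoc algorithm, and making sure that the choice of $x$ and the passage to $q^m$ really does land on the prime root of $\pi_{H^{m|q|}}(x,q^m)$ rather than some power of it. Once the dictionary $[\gamma]\leftrightarrow W([\gamma])$ is pinned down and shown to respect transitions, the termination of the algorithm (Steps~1--3 stabilise) is automatic since $Q_{\duA{H}}$ is finite and the algorithm only ever adds states that already correspond to genuine states of $\duA{H}$; and the identification $A\cong\duA{H}$ follows from the uniqueness in Proposition~\ref{Lemma:minimalityofdualaut}, so no separate minimality argument is needed.
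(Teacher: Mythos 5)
Your proposal takes essentially the same approach as the paper: both establish the dictionary $[\gamma]\leftrightarrow W([\gamma])$ and argue that the ad hoc algorithm simply unwinds $\duA{H}$ under this correspondence, with Step~1 producing the images of the constant-word states and Step~2 accreting images of their successors. The paper's own proof is far terser (three sentences that assert rather than verify the Step~2 transition rule), whereas you attempt to spell out the cycle-chasing that makes the prime root of $\pi_{H^{m|q|}}(x,q^m)$ come out to $W(\pi_{\duA{H}}(x,[\gamma]))$; this is the same argument, just with the bookkeeping made visible.
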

\begin{proof}

We note that for each $x \in X$, $W[x^{k}]$ are precisely the states of $A$  generated in the first step of the iterative process above. The states added  are precisely the elements of  $\{W(\pi_{\duA{H}}(x, [y^{k}]: x,y \in \xn\} \backslash \{W[x^{k}]: x \in \xn\}$. The process then repeats for each iterative step in the construction of $A$.
\end{proof}

\begin{rem}
    We note that by the above lemma $H \in \hn{n}$ has finite order if and only if the iterative process described above for constructing $A$ terminates.
\end{rem}

\section{Water for the witch -- shrinking conjugacy class representatives}
Suppose we have a finite order element $H\in \hn{n}$, induced by $(A,\phi_H)$ for some synchronizing automaton $A$.  Under certain conditions we may employ a two-step process to find a new element $I\in \hn{n}$, where $I$ is conjugate to $H$, and $I$ is induced by $(B,\psi)$ for some synchronizing automaton $B$ with fewer states than $A$. In the case that $H$ has a free action on $\xnN$ and is conjugate to a product of  $N$-cycles this process will eventually result in single state transducer.

The first step in this process is a ``relabelling'' strategy that maximises collapses in $B$.

\subsection{{Relabellings and automata sequences}}

In this section we describe our relabelling  process. We want to be able to relabel a synchronizing automaton supporting a finite order element of $\hn{n}$ while maintaining some control on the collapse structure of the automaton. The key idea essentially appears in the decomposition algorithm of \cite{BleakCameronOlukoyaI}. That algorithm can be  interpreted as stating that two synchronizing automaton have isomorphic digraphs if there is a relabelling process, induced by successive foldings of the first automaton, which transforms it into the second automaton. Thus, the relabellings we describe 
arise from carefully choosing appropriate foldings and are dependent on enumerating all maximal (in a sense we make precise below) foldings that lie between an automaton and one of its foldings. These are the automaton sequences we discuss below.

\subsubsection{Synchronizing sequences and collapse chains}

Let $A = (X_{n}, Q_{A}, \pi_{A})$ be an automaton. Define an equivalence relation $\sim_{A}$ on the states of $A$ by $p \sim_{A} q$ if and only if the maps $\pi_{A}(\cdot, p): \xn \to Q_{A}$ and $\pi_{A}(\cdot, q): \xn \to Q_{A}$ are equal. For a state  $q \in Q_{A}$ let $\mathsf{q}$ represent the equivalence class of $q$ under $\sim_{A}$. Further set $\mathsf{Q}_{\mathsf{A}}:= \{ \mathsf{q} \mid q \in Q_{A}\}$ and let ${\pi}_{\mathsf{A}}: \mathsf{Q}_{\mathsf{A}} \to \mathsf{Q}_{\mathsf{A}}$ be defined by ${\pi}_{\mathsf{A}}(x, \mathsf{q}) = \mathsf{p}$ where $p = \pi_{A}(x,q)$. Observe that $\pi_{\mathsf{A}}$ is a well defined map. Define a new automaton  $\mathsf{A} = (X_{n}, \mathsf{Q}_{\mathsf{A}}, \pi_{\mathsf{A}})$ noting that $|\mathsf{Q}_{\mathsf{A}}| \le |Q_{A}|$ and $|\mathsf{Q}_{\mathsf{A}}| = |Q_{A}|$ implies that $A$ is isomorphic to $\mathsf{A}$.

Given an automaton $A$, let $A_{0}:=A, A_1, A_2, \ldots$ be the sequence of automata such that $A_{i} = \mathsf{A}_{i-1}$ for all $i \ge 1$. We call the sequence $(A_i)_{i \in \mathbb{N}}$ the \emph{synchronizing sequence of $A$}. We make a few observations. 

By definition each term in the synchronizing sequence is a folding of the automaton which precedes it, therefore there is a $j \in \mathbb{N}$ such that all the $A_{i}$ for $i \ge j$ are isomorphic to one another. By a simple induction argument, for each $i$, the states of $A_{i}$ corresponds to a partition of $Q_{A}$. We identify the states of $A_{i}$ with this partition. For two states  $q,p \in Q_{A}$ that belong to a state $P$ of $A_{i}$, $\pi_{A}(x,q)$ and $\pi_{A}(x,p)$ belong to the same state of $Q_{A_i}$ for all $x \in X_{n}$. We will use the language `\textit{two states of $A$ are identified at level $i$}' if the two named states belong to the same element of $Q_{A_{i}}$. 

If the automaton $A$ is synchronizing and core, then an easy induction argument shows that all the terms in its synchronizing sequence are core and  synchronizing as well (since they are all foldings of $A$). For example if $A = G(n,m)$, then the first $m$ terms of the synchronizing sequence of $A$ are $(G(n,m), G(n,m-1), G(n,m-2),\ldots, G(n,1)$, after this all the terms in the sequence are the single state automaton on $X_{n}$.

The result below is from \cite{AutGnr}.

\begin{theorem}\label{thm:collapsingprocedure}
	Let  $A$ be an automaton and $A_{0}:=A, A_1, A_2, \ldots$ be the synchronizing sequence of $A$. Then 
	\begin{enumerate}
		\item a pair of states $p,q \in Q_{A}$, belong to the same element $t \in Q_{A_i}$ if and only if for all words $a \in X_{n}^{i}$, $\pi_{A}(a,p) = \pi_{A}(a, q)$, and
		
		\item $A$ is synchronizing if and only if there is a $j \in \mathbb{N}$ such that $|Q_{A_{j}}| = 1$. The minimal $j$ for which $|A_{j}| = 1$ is the minimal synchronizing level of $A$.
			\end{enumerate}
	
\end{theorem}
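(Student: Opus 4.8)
The plan is to prove part (1) by induction on $i$ and then derive part (2) by unwinding the definition of ``synchronizing at level $j$''. For $i \in \N$ introduce the equivalence relation $\equiv_{i}$ on $Q_{A}$ given by $p \equiv_{i} q$ exactly when $\pi_{A}(a,p) = \pi_{A}(a,q)$ for all $a \in \xn^{i}$; part (1) is then precisely the assertion that two states of $A$ are identified at level $i$ if and only if they are $\equiv_{i}$-equivalent. I would carry through the induction the slightly strengthened hypothesis $P(i)$: that $Q_{A_{i}}$ may be identified with the set of $\equiv_{i}$-classes of $Q_{A}$ (a genuine partition of $Q_{A}$), and that under this identification $\pi_{A_{i}}(x, P)$ is the $\equiv_{i}$-class of $\pi_{A}(x,p)$ for any $p \in P$. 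The base case $P(0)$ is immediate since $A_{0} = A$, $\xn^{0} = \{\varepsilon\}$, and $\pi_{A}(\varepsilon,\cdot)$ is the identity, so $\equiv_{0}$ is equality.

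For the inductive step, recall $A_{i+1} = \mathsf{A}_{i}$, so its states are the $\sim_{A_{i}}$-classes of states of $A_{i}$. I would first note that $\sim_{A_{i}}$ identifies states whose one-letter transition maps are \emph{literally} equal, so $\sim_{A_{i}}$-equivalent states of $A_{i}$ send each letter to the \emph{same} state of $A_{i}$ (not merely to a $\sim_{A_{i}}$-equivalent one); this is exactly what makes $\mathsf{A}_{i}$ a legitimate folded automaton with $\pi_{\mathsf{A}_{i}}$ well defined. Using the transition formula supplied by $P(i)$, two states $P, \tilde{P}$ of $A_{i}$ (that is, two $\equiv_{i}$-blocks) satisfy $P \sim_{A_{i}} \tilde{P}$ iff $\pi_{A}(x,p) \equiv_{i} \pi_{A}(x,\tilde{p})$ for every $x \in \xn$, i.e.\ iff $\pi_{A}(xa, p) = \pi_{A}(xa, \tilde{p})$ for all $x \in \xn$ and $a \in \xn^{i}$. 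Since every word of length $i+1$ factors uniquely as such an $xa$, this says exactly $p \equiv_{i+1} \tilde{p}$. Taking the union of the $\equiv_{i}$-blocks inside each $\sim_{A_{i}}$-class then shows $Q_{A_{i+1}}$ is exactly the set of $\equiv_{i+1}$-classes (a partition of $Q_{A}$ since the $\equiv_{i}$-classes are), and the transition formula for $\pi_{A_{i+1}}$ passes through $\pi_{\mathsf{A}_{i}}$ and $P(i)$; this gives $P(i+1)$ and hence part (1).

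Part (2) is then immediate: by part (1), $|Q_{A_{j}}| = 1$ means all of $Q_{A}$ is a single $\equiv_{j}$-class, i.e.\ for every $a \in \xn^{j}$ the map $\pi_{A}(a,\cdot)$ is constant on $Q_{A}$ --- and that is precisely the statement that $A$ is synchronizing at level $j$, the constant value being $\mathfrak{s}_{j}(a)$. Hence $A$ is synchronizing if and only if $|Q_{A_{j}}| = 1$ for some $j$, and the least such $j$ is the least synchronizing level. (Since $A_{i+1}$ is always a quotient of $A_{i}$, one has $|Q_{A_{i+1}}| \le |Q_{A_{i}}|$, so once the state set reaches size $1$ it stays there; this is consistent with the stabilisation of the synchronizing sequence noted earlier.)

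The only step requiring genuine care --- and so the ``main obstacle'', though it is a mild one --- is the bookkeeping in the inductive step: identifying the states of the iterated quotient $A_{i+1} = \mathsf{A}_{i}$ with honest subsets of the original state set $Q_{A}$, and verifying that the one-letter transition-agreement relation on $A_{i}$ composes to the length-$(i+1)$ word-agreement relation on $A$, with the letters read in the correct order (the new letter first, followed by a length-$i$ suffix). Everything else is routine.
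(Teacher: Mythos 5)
Your argument is correct and complete. The induction on $i$, with the strengthened hypothesis $P(i)$ tracking both the identification of $Q_{A_i}$ with $\equiv_i$-classes and the transition formula, is exactly the bookkeeping that is needed; you have also handled the one potential pitfall (the reading direction of words, so that a length-$(i{+}1)$ word factors as a new first letter followed by a length-$i$ suffix, consistent with the paper's convention $\pi_A(xa,p)=\pi_A(a,\pi_A(x,p))$). Part (2) then falls out as you say, and the monotonicity $|Q_{A_{i+1}}|\le|Q_{A_i}|$ closes the loop on stabilisation. Note that the paper does not actually prove this theorem --- it cites it to \cite{AutGnr} --- so there is no in-paper argument to compare against; your proof supplies the missing details in the natural way.
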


We also require the notion of a \textit{collapse chain} from \cite{BleakCameronOlukoyaI}. 
Let $A$ and $B$ be automata. Let $A = A_0, A_1, \ldots, A_{k} = B$  be a sequence such that $A_{i+1}$ is obtained from $A_{i}$ by identifying pairs of states $p \sim_{A_{i}} q$.
We note that as distinct from the synchronizing sequence, we do not necessarily make all possible identifications. 
Such a sequence is called a \textit{collapse chain} if at each step, we make the maximum number of collapses possible relative to the final automaton $B$. 
That is, for $u, v \in Q_{A}$  belonging to the same state of $B$, in the minimal  $A_{i}$  such that $[u] \sim_{A_{i}} [v]$, we have $[u] = [v]$ in $A_{i+1}$. 
We note that this condition means that a collapse chain is unique. 
Moreover, if  $A = A_0, A_1, \ldots, A_{k} = B$ is a collapse chain, then for any $0 \le j \le k$, $A = A_0, A_1, \ldots, A_{j}$ and  $A_{j}, \ldots, A_{k}$ are also collapse chains.

We say that an automaton $B$ \textit{belongs to a collapse chain of} $A$ if there is a collapse chain $A = A_0, A_1, \ldots, A_{k} = B$. In this case, we call the collapse chain $A = A_0, A_1, \ldots, A_{k} = B$, the \textit{collapse chain from $A$ to $B$}. If $B$ is a single state automaton, the collapse chain from $A$ to $B$ is precisely the synchronizing sequence of $A$. Thus a collapse chain can be thought of as a synchronizing sequence relative to its end point. 

It is not hard to see that if an automaton $B$ is a folding of a synchronizing automaton $A$, then there is a collapse chain from $A$ to $B$. Therefore, a synchronizing automaton $B$ belongs to a collapse chain of $A$ if and only if $B$ is a folding of $A$. In particular if $B$ belongs to a collapse chain of $A$, then $B$ is synchronizing at the minimal synchronizing level of $A$.

The following result about collapse chains is proved similarly to Theorem~\ref{thm:collapsingprocedure}.

\begin{theorem}\label{thm:collapsechains}
	Let  $A$ be an automaton. Suppose an automaton $B$ belongs to a collapse chain   $A_{0}:=A, A_1, A_2, \ldots, A_{m} = B$ of $A$.  Then a pair of states $p,q \in Q_{A}$ belong to the same element $t \in Q_{A_i}$ if and only if $p,q$ belong to the same state of $Q_{B}$ and for all words $a \in X_{n}^{i}$, $\pi_{A}(a,p) = \pi_{A}(a, q)$.
\end{theorem}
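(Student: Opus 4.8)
The plan is to prove this by induction on $i$, following the template of the proof of Theorem~\ref{thm:collapsingprocedure} from \cite{AutGnr} and carrying the extra ``lies in the same state of $B$'' constraint along at each step. The first thing I would do is pin down the recursive description of the chain. Each $A_{i+1}$ is obtained from $A_i$ by identifying some $\sim_{A_i}$-related states; moreover $B$ is a folding of every $A_i$ (since $A_i,\ldots,A_m=B$ is again a collapse chain), so ``lying in the same state of $B$'' makes sense, and is an equivalence relation, on $Q_{A_i}$. Using that the collapse chain makes the maximum number of collapses compatible with ending at $B$, one checks that $A_{i+1}$ is exactly the quotient of $A_i$ by the equivalence relation $\approx_i$ on $Q_{A_i}$ defined by: $[u]\approx_i[v]$ if and only if $[u]\sim_{A_i}[v]$ and $u,v$ lie in the same state of $B$. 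Indeed $\approx_i$ is an intersection of two equivalence relations, hence an equivalence relation; the induced identification is folding-compatible because $\sim_{A_i}$-related states have identical transition maps; it is the most we can do without destroying the possibility of reaching $B$; and the uniqueness clause in the definition of a collapse chain forces us to do at least this much at step $i$.

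With that in hand the induction runs as follows. For $i=0$ we have $A_0=A$ and $X_n^0=\{\varepsilon\}$ with $\pi_A(\varepsilon,p)=p$, so the left-hand side reads $p=q$ and the right-hand side reads ``$p,q$ lie in the same state of $B$ and $p=q$'', which is again just $p=q$. For the inductive step, suppose the statement holds at level $i$, with $i<m$. Unwinding the description of $A_{i+1}$, states $p,q\in Q_A$ lie in the same state of $A_{i+1}$ if and only if $p,q$ lie in the same state of $B$ and, for every $x\in X_n$, the states $\pi_A(x,p)$ and $\pi_A(x,q)$ lie in the same state of $A_i$. Apply the inductive hypothesis to each pair $\pi_A(x,p),\pi_A(x,q)$: the condition ``$\pi_A(x,p)$ and $\pi_A(x,q)$ lie in the same state of $A_i$'' becomes ``$\pi_A(x,p),\pi_A(x,q)$ lie in the same state of $B$, and $\pi_A(xa,p)=\pi_A(xa,q)$ for all $a\in X_n^i$''. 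Since $B$ is a folding of $A$ and folding maps commute with transitions, the first of these is automatic once $p,q$ already lie in the same state of $B$; and as $b=xa$ runs over $X_n^{i+1}$ exactly as $x$ runs over $X_n$ and $a$ over $X_n^i$, we are left with: $p,q$ lie in the same state of $A_{i+1}$ if and only if $p,q$ lie in the same state of $B$ and $\pi_A(b,p)=\pi_A(b,q)$ for every $b\in X_n^{i+1}$. This is the statement at level $i+1$.

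The only step needing genuine care, rather than routine bookkeeping, is the first one: extracting the clean formula $A_{i+1}=A_i/{\approx_i}$ from the informal phrase ``the maximum number of collapses possible relative to $B$''. Concretely, I would argue separately that a collapse chain may not collapse more than $\approx_i$ at step $i$ (it may only identify $\sim_{A_i}$-related states, and identifying states lying in different states of $B$ would make it impossible to finish at $B$, since every subsequent automaton is a folding of the previous one) and may not collapse less (the displayed clause in the definition requires every pair of states of $A$ lying in a common state of $B$ to be identified at the first level at which the corresponding $A_i$-states become $\sim$-related, and such identifications persist thereafter). Once $A_{i+1}=A_i/{\approx_i}$ is established, the propagation of the ``same state of $B$'' conditions through the induction is free, precisely because $B$ is a folding of $A$.
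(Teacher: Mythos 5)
Your proof is correct and takes essentially the approach the paper intends: the paper gives no explicit proof, deferring to the analogy with Theorem~\ref{thm:collapsingprocedure}, and your induction on $i$ mirrors that theorem's proof while threading the ``same state of $B$'' constraint through each step. The careful preliminary step establishing $A_{i+1}=A_i/{\approx_i}$ from the informal definition of a collapse chain is exactly the right thing to pin down, and your use of the folding property of $B$ to discharge the intermediate ``same state of $B$'' condition is what makes the inductive step close cleanly.
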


\subsubsection{Relabellings via foldings and conjugacy}

\begin{predef}\label{def:relabelling}
	Let $A$ be an automaton and $A= A_{0},A_{1}, \ldots, A_{m}$ be a collapse chain of $A$. Let $0 \le k \le m$ and $\rho_{k}$ be a vertex fixing automorphism of $G_{A_{k}}$. Define $A'$ to be the automaton with $Q_{A'} = Q_A$ and transition function defined as follows: for $p \in Q_{A}$, set $\pi_{A'}(x', p) = q$ if and only if there is an $x \in \xn$ such that $\pi_{A}(x,p) = q$ with $\lambda_{\mathscr{H}(A_{k}, \rho_{k})}(x, [p]) = x'$. We call $A'$ the \emph{relabelling of $A$ by $(A_{k}, \rho_{k})$} or \emph{the relabelling of $A$ by (the transducer) $\mathscr{H}(A_{k}, \rho_{k})$}.
\end{predef}

Note that if we relabel $A$ by $(A_k, \rho_{k})$, then the resulting automaton $A'$ is \textit{isomorphic} to $A$ in the sense that there is a natural digraph isomorphism from the underlying digraph of $A'$ to the underlying digraph of $A$ that fixes states and which maps the relabelled edges of $A'$ to the original edges in $A$. More precisely, if $(p,x,q)$ is an edge of $G_{A}$ and $\lambda_{\mathscr{H}(A_{k}, \rho_{k})}(x, [p]) = x'$, then the natural digraph isomorphism maps the edge $(p,x',q)$ of $A'$ to the edge $(p,x,q)$ of $A$.  The point of view one should have in mind is that we have renamed/relabelled the edges of $A$ by switching edge labels on edges which are parallel edges in $A_{k}$. Notice that if we relabel by $(A_0,\rho_0)$, then all we do is switch labels on parallel edges in $A$, thus the resulting underlying digraph would not change, but a ``fixed'' drawing of it would be relabelled. 

The following results says that a relabelling of an automaton $A$ by $(A_k, \rho_k)$ essentially preserves the collapse structure of $A$.

\begin{lemma}\label{lemma:relabelling}
	Let $A$ be an automaton and $A= A_0, A_{1}, \ldots, A_{m}$ be a collapse chain for $A$. Let $0 \le k \le m$ and $\phi$ be a vertex fixing automorphism of $G_{A_{k}}$. Let $A'$ be the relabelling of $A$ by $(A_{k}, \phi)$. Then 
 \begin{itemize}
     \item the underlying digraphs of $A'$ and $A$ are isomorphic;
     \item $A_{m}$ belongs to a collapse chain of $A'$;
     \item if $A'= A'_0, A'_{1}, \ldots, A'_{l}$  is the collapse chain from $A'$ to $A_{m}$ then,
     \begin{itemize}
         \item then $l \le m$, and,
         \item two states of $u,v$ of $A$ belong to the same state of $A_{i}$ if and only if  $u$ and $v$ belong to the same state of $A'_{i'}$ for some $i' \le i$; in other words, the partition of $Q_A = Q_{A'}$ induced by $A'_i$ is coarser than the partition induced by $A_i$.
     \end{itemize}
 \end{itemize}
 
\end{lemma}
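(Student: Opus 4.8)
The plan is to unwind the definitions and reduce everything to Theorem~\ref{thm:collapsechains}. First I would record the fact, already observed just after Definition~\ref{def:relabelling}, that $A'$ and $A$ have isomorphic underlying digraphs via the natural state-fixing isomorphism $\eta$ sending the edge $(p,x',q)$ of $A'$ to $(p,x,q)$ of $A$, where $x' = \lambda_{\mathscr{H}(A_{k},\phi)}(x,[p])$; this handles the first bullet immediately and also tells us that $A'$ is synchronizing at the minimal synchronizing level of $A$ (call it $m$), since synchronization is a property of the automaton only through its behaviour up to such relabellings — more carefully, one checks directly that reading a word of length $m$ in $A'$ forces a state, because the underlying digraph, hence the forcing structure, is unchanged; the labels on edges have merely been permuted within each $\sim_{A_k}$-class, and $\sim_{A_k}$ is a coarsening of the partition forced by length-$k$ words, which itself refines the length-$m$ (full-collapse) partition. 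Thus $A_m$, being a single point only in the degenerate case, is replaced here by the end automaton of the chain: I would first treat the case where $A_m$ is a genuine automaton in the collapse chain, and note $A_m$ is a folding of $A$, hence (having the same underlying digraph data relevant to foldings) a folding of $A'$, so by the remark preceding Theorem~\ref{thm:collapsechains} there is a collapse chain from $A'$ to $A_m$; this is the second bullet.

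For the third bullet, the key is to compare the two collapse chains through Theorem~\ref{thm:collapsechains}. Write the collapse chain from $A'$ to $A_m$ as $A' = A'_0, A'_1, \ldots, A'_l$. The crucial claim is the comparison of partitions: two states $u,v$ of $A$ lie in the same state of $A_i$ if and only if they lie in the same state of $A'_{i'}$ for some $i' \le i$. To prove the forward direction, suppose $u,v$ are identified at level $i$ in the chain for $A$. By Theorem~\ref{thm:collapsechains} this means $u,v$ lie in the same state of $A_m$ and $\pi_A(a,u) = \pi_A(a,v)$ for all $a \in X_n^i$. I would then show the analogous condition holds in $A'$ but possibly at a \emph{smaller} level: because the relabelling only permutes edge-labels within $\sim_{A_k}$-classes and $\rho_k$ (there written $\phi$) is a \emph{vertex-fixing} automorphism of $G_{A_k}$, the relation "$\pi_{A'}(a', u) = \pi_{A'}(a', v)$ for all $a'$ of length $i$" is implied by the condition that $u,v$ have the same forward behaviour in $A$, and in fact the label-reshuffling can only make states agree \emph{sooner} (never later) — reading a single letter from $u$ and from $v$ in $A'$ lands in the same state whenever it did in $A$, since the target of an edge is untouched by $\eta$. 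Hence $u,v$ are identified in $A'$ at some level $i' \le i$, and combined with $u,v$ being in the same state of $A_m$, Theorem~\ref{thm:collapsechains} (applied to the $A'$-chain) gives that they are in the same state of $A'_{i'}$. Taking $i = m$ shows $l \le m$ as well, giving the first sub-bullet. The reverse direction of the partition comparison is a symmetric argument using that $A$ is the relabelling of $A'$ by $(A_k, \phi^{-1})$ (since $\rho_k$ is invertible and the relabelling operation is manifestly invertible), so the two automata play symmetric roles; one must be slightly careful that the collapse chain of $A$ to $A_m$ is the \emph{given} one, but uniqueness of collapse chains (noted after the definition of collapse chain) makes this automatic.

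I expect the main obstacle to be the careful bookkeeping in the claim that relabelling can only ever \emph{accelerate} collapses — i.e., the monotonicity statement $i' \le i$. The subtlety is that $\sim_{A'_j}$ for small $j$ depends on the relabelled transition function $\pi_{A'}$, and one must verify that "$p \sim_{A_j}$-behaviour matches $q$" transfers to "$p \sim_{A'_j}$-behaviour matches $q$" without shifting the level upward; the right way to see this is to use Theorem~\ref{thm:collapsechains}'s characterisation in terms of \emph{targets} of length-$i$ words rather than labels, since targets are exactly what $\eta$ preserves, while noting that the constraint "$u,v$ lie in the same state of $A_m$" is also target-data and is preserved. Once that monotonicity is in hand, everything else is a direct application of the two collapse-chain theorems and the invertibility/symmetry of the relabelling construction.
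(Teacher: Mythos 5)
Your high-level plan is sound — reduce to Theorem~\ref{thm:collapsechains}, treat the digraph isomorphism $\eta$ as the organizing device, and argue monotonicity of the collapse levels — but the central step is not justified, and the reason you offer for it is actually false in the hard case. You assert that ``reading a single letter from $u$ and from $v$ in $A'$ lands in the same state whenever it did in $A$, since the target of an edge is untouched by $\eta$.'' This is only true when $[u]=[v]$ in $A_k$. When $[u]\neq[v]$, the letter-permutation applied at $u$ (namely $\lambda_{\mathscr{H}(A_k,\phi)}(\cdot,[u])$) is independent of the one applied at $v$, so the edge labelled $x$ out of $u$ in $A'$ and the edge labelled $x$ out of $v$ in $A'$ are, via $\eta$, edges of $A$ carrying \emph{different} original letters. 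Concretely, $\pi_{A'}(x,u)=\pi_A(y_u,u)$ and $\pi_{A'}(x,v)=\pi_A(y_v,v)$ where $y_u$ and $y_v$ need not coincide; knowing that $\pi_A(\cdot,u)$ and $\pi_A(\cdot,v)$ agree on long enough words says nothing directly about $\pi_A(y_u,u)$ versus $\pi_A(y_v,v)$. Note also that this non-coincidence $[u]\neq[v]$ is exactly the regime where the result is non-trivial: if $u,v$ first agree at level $j>k$, then by Theorem~\ref{thm:collapsechains} their length-$k$ transitions do not all agree, so they lie in distinct $\sim_{A_k}$-classes. The ``monotonicity'' you flag as the main obstacle is indeed the crux, but your proposed resolution — appealing to targets being preserved by $\eta$ — does not engage with the fact that the \emph{same input word} read from $u$ and from $v$ in $A'$ pulls back to \emph{different} input words in $A$.

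What is actually required, and what the paper does, is the following. Fix $\gamma\in X_n^{j}$ and determine (by inverting the relabelling along the path) the words $\delta_u\kappa_u$ and $\delta_v\kappa_v$ in $A$ such that reading $\gamma$ from $u$ (resp.\ $v$) in $A'$ is reading $\delta_u\kappa_u$ from $u$ (resp.\ $\delta_v\kappa_v$ from $v$) in $A$, with the length-$(j-k)$ prefix $\delta_u$ relabelled along the path from $[u]$, and similarly for $v$. One then shows $\pi_A(\delta_u\kappa_u,u)=\pi_A(\delta_v\kappa_v,v)$: since $\pi_{A_k}(\cdot,[u])$ and $\pi_{A_k}(\cdot,[v])$ already coincide on $X_n^{j-k}$, and \emph{since $\phi$ is vertex-fixing}, both $\pi_A(\delta_u,u)$ and $\pi_A(\delta_v,v)$ land in the same $\sim_{A_k}$-class $[s]$, so the subsequent relabelling of the length-$k$ suffix at $[s]$ is the same for both; the final targets then agree because $u,v$ agreed at level $j$. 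It is precisely here that the vertex-fixing hypothesis is used in a substantive way, rather than only being mentioned. Your observation that $A$ is recovered from $A'$ by relabelling with $(A_k,\phi^{-1})$ is correct and is a clean way to get the ``reverse'' direction of the partition comparison once the forward direction is secured; but the forward direction needs the word-by-word pre-image bookkeeping sketched above, and your proposal does not supply it.
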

\begin{proof}
     We may consider $A$ as a non-minimal  transducer where each state induces the identity transformation of the set $\xn$. Consider the  product $A \ast \mathscr{H}(A_{k}, \phi)$. Let $x \in \xn$ and $p,q \in Q_{A}$ such that $\pi_{A}(x, p) = q$. Then we have, $\pi_{A \ast \mathscr{H}(A_{k}, \phi)}(x, (p,[p])) = (q,[q])$ and $\lambda_{A \ast \mathscr{H}(A_{k}, \phi)}(x, (p,[p])) = \lambda_{\mathscr{H}(A_{k}, \phi)}(x, [p])$.  Let $D$ be the  subtransducer of $A \ast \mathscr{H}(A_{k}, \phi)$ with state set $Q_{D} = \{ (p, [p]) \mid p \in Q_{A} \}$. So $\pi_{D}$ and $\lambda_D$ are restrictions of $\pi_{A \ast \mathscr{H}(A_{k}, \phi)}$ and $\lambda_{D}$ to   $\xn \times Q_D$.
     Setting $A'$ to be the range automaton of $D$ we see that $A'$ is the relabelling of $A$ by $(A_{k}, \phi)$. From this it follows that the underlying digraph of $A'$ is isomorphic to the underlying digraph of $A$.

    We consider the second and last bullet points together.

	Let $q, r$ be two states of $A$ which belong to the same state of $A_{m}$ and which transition identically on all words of length $j$ and suppose $j$ is minimal for which this happens. That is $j \in \N$ is minimal such that $\pi_{A}(\cdot, q)$ and $\pi_{A}(\cdot, p)$ restrict to the same map $\tau_{q,r}: \xn^{j} \to Q_{A}$ on $\xn^{j}$.  
    Observe that for an arbitrary state  $p \in Q_{A}$, by definition, $\tau_{q,r}^{-1}(p)$ is the set consisting of all words $\gamma$ such that $\pi_{A}(\gamma, q) = \pi_{A}(\gamma, r) = p$. 
    
    We consider two cases.
	
	First suppose that $k \ge j$.  This means that in $A_{k}$, the states $[q]$ and $[r]$ are equal. Thus, $\lambda_{\mathscr{H}(A_{k}, \phi)}(\gamma, [q]) = \lambda_{\mathscr{H}(A_{k}, \phi)}(\gamma, [r])$ for any $\gamma \in \xn^{*}$. Therefore in $A'$ we see that for any arbitrary state $p \in Q_A$ the set of words $\nu \in \xn^{j}$ for which $\pi_{A'}(\nu, q) =p$ or $\pi_{A'}(\nu, r) = p$ is precisely the set $\{\lambda_{\mathscr{H}(A_{k}, \phi)}(\gamma, [q]) \mid \gamma \in \tau_{q,r}^{-1}(p)\}$, and, by the preceding sentence,  $\pi_{A'}(\cdot, q)$ and $\pi_{A'}(\cdot, r)$ coincide on this set. Since $\sqcup_{p \in Q_{A}}\tau_{q,r}^{-1}(p) = \xn^{j}$, we conclude that $\pi_{A'}(\cdot, q)$ and $\pi_{A'}(\cdot, r)$ coincide on the set $\xn^{j}$.

    Note that this case also demonstrates that $A_k$ belongs to a collapse chain of $A'$ (and so $A_{j}$ for $j \ge k$ belongs to a collapse chain of $A'$). Since all the collapses from $A$ to $A_j$ can be replicated in $A'$

	Now suppose that $k < j$. 
 This means that the states $[q]$ and $[r]$ are distinct states of $A_{k}$ such that $\pi_{A_{k}}(\cdot, [q])$ and $\pi_{A_{k}}(\cdot, [r])$ coincide on $\xn^{j-k}$. 
 Let $p \in Q_A$ and $\gamma \in \tau_{q,r}^{-1}(p)$ be arbitrary. 
 Set $\gamma_{1}$ to be the length $j-k$ prefix of $\gamma$ and set $\gamma_{2} \in \xn^{k}$ such that $\gamma_{1} \gamma_2 = \gamma$. Set $[s] = \pi_{A_{k}}(\gamma_1, [q])=\pi_{A_{k}}(\gamma_1, [r])$ and set $\kappa \in \xn^{k}$ such that $\lambda_{\mathscr{H}(A_{k},\phi}(\kappa, [s]) = \gamma_2$.
 For $t \in \{q,r\}$, let $\delta_{t} \in \xn^{j-k}$ be defined such that $\lambda_{\mathscr{H}(A_{k}, \phi)}(\delta_{t}, [t]) = \gamma_{1}$. 
 Then, since $\phi$ is a vertex fixing automorphism of $A_{k}$, we notice that $\pi_{A}(\delta_{q}, q)$, $\pi_{A}(\delta_r, r)$, $\pi_{A}(\gamma_{1},t)$, $t \in \{q,r\}$, all belong to the same state of $A_{k}$. 
 This means that $\pi_{A}(\kappa,\pi_{A}(\delta_{q}, q)) = \pi_{A}(\kappa,\pi_{A}(\delta_{r}, r)) = p'$.
 (Note that $[p'] = [p]$ in $A_{k}$.)  
 Therefore, $\pi_{A\ast \mathscr{H}(A_{k}, \phi)}(\delta_{q}\kappa, (q,[q])) = (p',[p]) = \pi_{A\ast \mathscr{H}(A_{k}, \phi)}(\delta_{r}\kappa, (r,[r]))$. 
 Since $\lambda_{A\ast \mathscr{H}(A_{k}, \phi)}(\delta_{q}\kappa, (q,[q])) = \gamma = \lambda_{A\ast \mathscr{H}(A_{k}, \phi)}(\delta_{r}\kappa, (r,[r]))$, we see that in $A'$, $\pi_{A'}(\gamma, q) = p' =\pi_{A'}(\gamma, r)$. 
    Therefore,  $\pi_{A'}(\cdot, q)$ and $\pi_{A'}(\cdot, r)$ coincide on the set $\tau_{q,r}^{-1}(p)$. 
    Since $p$ was chosen arbitrarily and $\sqcup_{p \in Q_{A}}\tau_{q,r}^{-1}(p) = \xn^{j}$, we conclude that $\pi_{A'}(\cdot, q)$ and $\pi_{A'}(\cdot, r)$ coincide on the set $\xn^{j}$.
	 
	 The final bullet point now follows by Theorem~\ref{thm:collapsechains}.

\end{proof}

Let $A$ be a synchronizing automaton and $A'$ be the relabelling of $A$ by $(A_{k},\phi_{k})$. Set $\iota: G_{A} \to G_{A'}$ to be the natural digraph isomorphism. If $\varphi$ is an automorphism of the underlying digraph $G_{A}$ of $A$, then we will mean by the \textit{induced automorphism} $\varphi'$ of $G_{A'}$ precisely the map $\iota^{-1}\varphi\iota$.

\begin{lemma}\label{lemma:relabellinginducesconj}
 	Let $H \in \hn{n}$ be an element of finite order, and $A$ a support of $H$ realised by an automorphism $\phi_H$ of its underlying digraph. Let $B$ be a folding of $A$ and $\psi$ a vertex fixing automorphism of $B$. Let $A'$ be the relabelling of $A$ by $(B, \psi)$ and $\varphi$ be the induced isomorphism from the underlying digraph of $A$ to the underlying digraph of $A'$. Set $I$ to be the minimal representative of the transducer $\mathscr{H}(A,A', \varphi)$. Then $I^{-1}AI$ is the minimal representative of $\mathscr{H}(A', \phi_{H}^{\varphi})$.
 \end{lemma}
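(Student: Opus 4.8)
The plan is to isolate a \emph{composition law} for the gluing construction $\mathscr{H}(-,-,-)$ and apply it twice. Throughout, write $\varphi=\iota\colon G_{A}\to G_{A'}$ for the natural digraph isomorphism attached to the relabelling: it fixes every vertex and sends an edge $(p,x,q)$ of $A$ to $(p,\,\lambda_{\mathscr{H}(B,\psi)}(x,[p]),\,q)$ of $A'$ (this being an edge of $A'$ by the very definition of the relabelling). Then $\phi_{H}^{\varphi}=\iota^{-1}\phi_{H}\iota$ is an automorphism of $G_{A'}$, $I=\overline{\mathscr{H}(A,A',\iota)}$, and --- since the inverse of an invertible transducer is obtained by reversing arrows (Subsection~\ref{sec:transducertoolkit}) --- $I^{-1}=\overline{\mathscr{H}(A',A,\iota^{-1})}$; I read the symbol $A$ in ``$I^{-1}AI$'' as the transducer $\mathscr{H}(A,\phi_{H})$ representing $H$. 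As a preliminary I would record that $A'$ is again synchronizing and core (contained in Lemma~\ref{lemma:relabelling} and the discussion after Definition~\ref{def:relabelling}), so $\mathscr{H}(A,A',\iota)$, $\mathscr{H}(A',\phi_{H}^{\varphi})$ and their inverses are bi-synchronizing and core; every product below therefore reduces to a genuine element of $\hn{n}$ and the whole computation takes place inside $\hn{n}\cong\asnN$.

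The heart of the matter is the following sub-lemma, which I would prove first: if $A_{1},A_{2},A_{3}$ are synchronizing core automata on $\xn$ and $\alpha\colon G_{A_{1}}\to G_{A_{2}}$, $\beta\colon G_{A_{2}}\to G_{A_{3}}$ are digraph isomorphisms, then, writing $\sim_{\omega}$ for $\omega$-equivalence,
\[
\mathscr{H}(A_{1},A_{2},\alpha)\ast\mathscr{H}(A_{2},A_{3},\beta)\ \sim_{\omega}\ \mathscr{H}(A_{1},A_{3},\alpha\beta).
\]
To prove this, let $P$ be the product transducer, with state set $Q_{A_{1}}\times Q_{A_{2}}$, and restrict attention to the ``graph of $\alpha$'', $S=\{(p,(p)\alpha):p\in Q_{A_{1}}\}$. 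Unwinding the definitions of the transducer product and of $\mathscr{H}$ --- using that $\mathscr{H}(A_{i},A_{i+1},\cdot)$ read from a state $u$ traces a path of $A_{i}$ from $u$ and, via the gluing isomorphism, the corresponding path of $A_{i+1}$ --- one checks in a few lines that $S$ is closed under the transitions of $P$ and that $P\restriction S$, identified with $Q_{A_{1}}$ via $(p,(p)\alpha)\leftrightarrow p$, is literally the transducer $\mathscr{H}(A_{1},A_{3},\alpha\beta)$. It then suffices to show $\core(P)\subseteq S$: feeding a word $w$ of length at least the largest of the synchronizing levels of $A_{1},A_{2},A_{3},P$ into $P$ from an arbitrary state $(p_{0},u_{0})$, the $A_{1}$-coordinate is forced to $q:=\mathfrak{s}_{A_{1}}(w)$, while the $A_{2}$-coordinate reads the output word $w'$ of the first factor; since that factor carries the $A_{1}$-path labelled $w$ to the $A_{2}$-path labelled $w'$ running from $(p_{0})\alpha$ to $(q)\alpha$, the word $w'$ forces the state $(q)\alpha$ of $A_{2}$, so the state forced in $P$ is $(q,(q)\alpha)\in S$. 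Hence $\core(P)=\core(P\restriction S)$, so $P$ and $\mathscr{H}(A_{1},A_{3},\alpha\beta)$ induce the same continuous map of $\xnN$ and therefore share a minimal core representative.

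With the sub-lemma available, the lemma follows immediately. As $I^{\pm1}$ are the minimal representatives of $\mathscr{H}(A,A',\iota)^{\pm1}$, the product $I^{-1}\ast\mathscr{H}(A,\phi_{H})\ast I$ is $\omega$-equivalent to $\mathscr{H}(A',A,\iota^{-1})\ast\mathscr{H}(A,A,\phi_{H})\ast\mathscr{H}(A,A',\iota)$; applying the sub-lemma first to the two left-hand factors (getting $\mathscr{H}(A',A,\iota^{-1}\phi_{H})$ up to $\omega$-equivalence, since $\omega$-equivalence is preserved under $\ast$) and then to the outcome together with $\mathscr{H}(A,A',\iota)$, this is $\omega$-equivalent to $\mathscr{H}(A',A',\iota^{-1}\phi_{H}\iota)=\mathscr{H}(A',\phi_{H}^{\varphi})$. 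Passing to minimal core representatives gives $I^{-1}HI=\overline{\mathscr{H}(A',\phi_{H}^{\varphi})}$ in $\hn{n}$, which is the assertion.

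The one genuinely substantive point is the core-capturing step of the sub-lemma --- checking that the core of the raw product really lands on the graph of $\alpha$. This is where the synchronizing hypotheses are used, together with the behaviour of $\mathscr{H}(A,\phi)$ on the words forcing a given state (essentially the mechanism of Remark~\ref{rem:representationProps}(2)); the identification $P\restriction S=\mathscr{H}(A_{1},A_{3},\alpha\beta)$ and the observation that $A'$ and $\mathscr{H}(A',\phi_{H}^{\varphi})$ are synchronizing and core are routine.
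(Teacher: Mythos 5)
Your proof is correct, and since the paper's own proof is simply the one-line remark that the lemma is ``a straight-forward application of the definitions,'' your argument is in effect that application carried out explicitly: the composition sub-lemma $\mathscr{H}(A_1,A_2,\alpha)\ast\mathscr{H}(A_2,A_3,\beta)\sim_\omega\mathscr{H}(A_1,A_3,\alpha\beta)$ and the core-capture step (the second coordinate of the product is driven to $(q)\alpha$ because the first factor transports the $A_1$-path forcing $q$ to an $A_2$-path ending at $(q)\alpha$) are exactly the definitional unwinding being alluded to. Applying that composition law twice to $\mathscr{H}(A',A,\iota^{-1})\ast\mathscr{H}(A,A,\phi_H)\ast\mathscr{H}(A,A',\iota)$ yields $\mathscr{H}(A',\phi_H^{\varphi})$ up to $\omega$-equivalence, which is precisely the assertion.
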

 \begin{proof}
 	This is a straight-forward application of the definitions.
 \end{proof}
 
 In the situation of Lemma \ref{lemma:relabellinginducesconj} we refer to the resulting transducer  $I^{-1}AI$ as the \emph{conjugate of  $H$ induced by the relabelling $A\mapsto A'$}.

\subsection{Applications of relabellings I: removing collapse obstructions}\label{sec:application1}

The first application we make of relabellings resolves the following natural scenario. Often, in a synchronizing automaton $A$ we have a pair of states $p$ and $q$ with the property that $\pi_{A}(\cdot, p)$ and $\pi_A(\cdot, q)$ do not coincide on $\xn$ yet, for any $r \in Q_A$, $|\{x \in \xn : \pi_A(x, p) = r\}| = |\{x \in \xn : \pi_{A}(x,q) = r\}$. In other words, the states $p$ and $q$ ought to be collapsible but are not due to a misalignment in their edge labels. Our first application of relabellings is to show that unfortunate edge labels can be overcome. 

The scenario we formalise below is a little nuanced than the one just described; that illustration however is emblematic and should be kept in the back of the reader's mind. 

\subsubsection{Characterising disagreement: constructing discriminant permutations \texorpdfstring{$\disc{s,t,Q}$}{Lg}}

Let $A$ be an automaton and let $s,t \in Q_{A}$.  Set the notation:
\begin{align*}
\edgesT{A}{s}{t}&\seteq\{(s,x,t)\in \textrm{E}_A\}; \text{ and}\\
\letsT{A}{s}{t}&\seteq\{x\in \xn\mid (s,x,t)\in \edgesT{A}{s}{t}\}.
\end{align*}

 We may leave out the explicit mention of the automaton $A$ when it is clear from context, writing simply E$(s,t)$ and Letters$(s,t)$ for these sets in this case.

 Let $Q\subseteq Q_{A}$ and $s\in Q_{A}$.  Set the notation 
 \[
 X_{s,Q} \seteq \bigsqcup_{p \in Q} \lets{s}{p}.
 \]
 
Now, suppose $s, t\in Q_A$ and suppose further there is a subset $Q \subseteq Q_{A}$ so that 

\begin{enumerate}
    \item $X_{s,Q} = X_{t,Q}$, and \label{disc:union}    \item for all $p \in Q$ we have $\letCnt{s}{p}=\letCnt{t}{p}$ \label{disc:cardinality}.
\end{enumerate} Then to describe this situation we say \textit{$s$ and $t$ distribute similarly over $Q$}.  (Note in passing that for some choices of $s$ and $t$ the only possible such set $Q$ may be empty.) For any states $s$ and $t$ and set $Q\subset Q_A$ so that $s$ and $t$ distribute similarly over $Q$, we denote by $X_Q$ the set $X_{s,Q}=X_{t,Q}$. We call $X_{Q} \subseteq \xn$ \textit{the agreement alphabet (of $s$ and $t$ on $Q$)} noting that if $Q = Q_{A}$, then $X_{Q} = \xn$.

For states $s,t \in Q_A$ which distribute similarly over a subset $Q$ of $Q_A$, define a bijection $\mathrm{disc}(s,t,Q): X_{Q} \to X_{Q}$ as follows.

First, let $p_1, \ldots, p_r \in Q$ 
be a maximal sequence of 
distinct states such that for $1 \le i \le r$ there is an $x \in X_{Q}$ with $\pi_{A}(x, s) = p_i$.
Observe that the sets
\[
\{\lets{s}{p_i}\mid 1\leq i\leq r\}
\] and
\[\{\lets{t}{p_i}\mid 1\leq i\leq r\}
\] form partitions of $X_{Q}$, with equal-size corresponding parts in index $i$.

Now, for $1 \le i \le r$, set $\mathrm{disc}(s,t,Q)$ to act as the 
identity on $\lets{s}{p_i} \cap \lets{t}{p_i}$. 
Set $$\lets{s}{p_i}':= \lets{s}{p_i} \backslash (\lets{s}{p_i} \cap \lets{t}{p_i})$$ and
$$\lets{t}{p_i}':= \lets{t}{p_i} \backslash (\lets{s}{p_i} \cap \lets{t}{p_i}).$$ 
We note that $|\lets{s}{p_i}'| = |{\lets{t}{p_i}'}|$ and indeed that $$Y_{Q}\seteq\bigcup_{1 \le i \le r} \lets{s}{p_i}' = \bigcup_{1 \le i \le r} \lets{t}{p_i}'.$$ Order the elements of $\lets{s}{p_i}'$ and $\lets{t}{p_i}'$ with the order induced from $\xn$. For $1 \le i \le r$ and $x \in \lets{s}{p_i}'$ we write $x'$ for the corresponding element of $\lets{t}{p_i}'$, that is, in the ordering of $\lets{s}{p_i}'$ and $\lets{t}{p_i}'$ induced from $\xn$, $x$ and $x'$ have the same index.

Using the definitions and facts above we extend the definition of $\disc{s,t,Q}$ over the set $Y_{Q}$  by the rule $x\mapsto x'$.  One easily checks that the resulting function 
$$\disc{s,t,Q}:X_Q\to X_Q$$
is a well-defined bijection.  Further, observe that for $x_0\in Y_{Q}$ the function $\disc{s,t,Q}$ contains a cycle $(x_0 \ x_1 \ x_2 \ \ldots \ x_{k-1}) $ in its disjoint cycle decomposition, where for all $i$ we have $x_{i+1}=x_i'$ (indices taken mod $k$).  Recall as well that $\disc{s,t,Q}$ acts as the identity over the set $X_Q\backslash Y_{Q}$.

For $s$ and $t$ satisfying points \eqref{disc:union} and \eqref{disc:cardinality} for some set $Q$ we call $\disc{s,t, Q}$ \textit{the discriminant of $s$ and $t$}; it is a permutation that encodes the difference in transitions between $s$ and $t$ amongst the states $Q$. In the case that $Q=Q_A$, we will write $\disc{s,t}$ for the bijection $\disc{s,t, Q_{A}}$. As with the notation $\lets{p}{q}$, we often run in to situations where we compute discriminant permutations in distinct  automata sharing the same state set, in such cases we use the notation $\discT{s,t,Q}{A}$ and $\discT{s,t}{A}$  to emphasise the automaton in which the permutation is calculated.

\subsubsection{Maximising agreement} \label{Sec:discandamalg}

Given a synchronizing automaton $A$ we want to maximise the number of collapses at each step in its synchronizing sequence, that is, at each step of its synchronizing sequence all states that ought to be collapsible should be collapsible.
The following lemma is the key tool we use to do this. It states that for a pair of states  $s,t$ of some synchronizing automaton $A$ which ought to be collapsible, there is a folding of $A$ in which the states containing $s$ and $t$ are distinct and the cycles of the discriminant permutation are parallel edges. We may then induce a relabelling of $A$ using a vertex fixing automorphism of this folding in order to make the states $s$ and $t$ actually collapsible.

\begin{lemma}\label{Lemma:parallelegdes}
    Let $A$ be an automaton, and let $s, t$ be distinct states of $A$. Let $Q \subseteq Q_{A}$ be any set of states over which $s$ and $t$ distribute similarly and let $X_{Q}$ be the agreement alphabet. Let $(A_i)_{ 1 \le i \le m}$ be a collapse chain of $A$  such that $s,t$ belong to the same state of $A_{m}$. Let $1 \le k < m$ be minimal such that $\pi_{A_{k}}(\cdot, [t])$ and $\pi_{A_{k}}(\cdot, [s])$ are equal on $X_{Q}$. Then for  $x,y \in X_{Q}$ which belong to the same disjoint cycle of $\disc{s,t,Q}$, $$\pi_{A_{k}}(x,  [s]) = \pi_{A_{k}}(y, [s]) = \pi_{A_{k}}(y, [t]) = \pi_{A_{k}}(x, [t]).$$
\end{lemma}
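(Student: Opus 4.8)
The plan is to analyze how the discriminant permutation $\disc{s,t,Q}$ interacts with the collapse chain, working one disjoint cycle at a time. Recall $\disc{s,t,Q}$ is built by pairing, for each target state $p_i$ reachable from $s$ via $X_Q$, the "leftover" letters $\lets{s}{p_i}' $ with $\lets{t}{p_i}'$ in the order induced from $\xn$, and declaring $x \mapsto x'$ whenever $x$ and $x'$ sit at the same index in these two ordered sets. A disjoint cycle $(x_0\,x_1\,\ldots\,x_{k-1})$ of $\disc{s,t,Q}$ therefore has the defining property that for each $i$, $\pi_A(x_i, s)$ and $\pi_A(x_{i+1}, t)$ land in the same state of $A$ — indeed, by construction there is a single target $p_{j(i)}$ with $x_i \in \lets{s}{p_{j(i)}}'$ and $x_{i+1}\in \lets{t}{p_{j(i)}}'$, so $\pi_A(x_i, s) = p_{j(i)} = \pi_A(x_{i+1}, t)$. (Letters fixed by $\disc{s,t,Q}$, i.e. those in $X_Q \setminus Y_Q$, already satisfy $\pi_A(x,s) = \pi_A(x,t)$ outright.) This is the raw combinatorial input; everything else is transporting it down the collapse chain to level $k$.

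First I would pass from $A$ to $A_m$. Since $s,t$ belong to the same state of $A_m$, for \emph{every} word $a$ (in particular every letter $x \in X_Q$), $\pi_{A_m}([x],[s]) = \pi_{A_m}([x],[t])$; moreover, by Theorem~\ref{thm:collapsechains} (applied with the collapse chain $A_0,\ldots,A_m$), two states of $A$ lie in the same state of $A_i$ iff they lie in the same state of $A_m$ and agree on all words of length $i$. So the relation "lies in the same state of $A_i$" on $Q_A$ is, restricted to states that are already identified in $A_m$, exactly "agrees on all length-$i$ words". Now fix a disjoint cycle $(x_0\,x_1\,\ldots\,x_{k-1})$ of $\disc{s,t,Q}$. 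From the previous paragraph, $\pi_A(x_i, s)$ and $\pi_A(x_{i+1}, t)$ are the \emph{same state} $p_{j(i)}$ of $A$; a fortiori they agree in $A_\ell$ for every $\ell$. Also $\pi_A(x_i, t)$ and $\pi_A(x_i, s)$ both lie in $A_m$'s common state (being $\pi_{A_m}([x_i],[t])=\pi_{A_m}([x_i],[s])$), and similarly for $s$; but more is needed: I want them to agree \emph{at level $k-1$}, not merely in $A_m$.

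That sharpening is the crux. Let $k$ be minimal with $\pi_{A_k}(\cdot,[s]) = \pi_{A_k}(\cdot,[t])$ on $X_Q$. By minimality and by the fact that $A_{k-1} \to A_k$ is one collapse step, at level $k-1$ the states $[s]$ and $[t]$ of $A_{k-1}$ are distinct but get identified in $A_k$; hence (collapse-step property, $\sim_{A_{k-1}}$) $\pi_{A_{k-1}}(\cdot,[s])$ and $\pi_{A_{k-1}}(\cdot,[t])$ coincide \emph{as maps on all of $\xn$}, in particular on $X_Q$. Now chase the cycle: $\pi_{A_{k-1}}(x_i, s)$ equals $\pi_{A_{k-1}}(x_i, t)$ (just shown), which equals $\pi_{A_{k-1}}$ of $p_{j(i-1)}$-versus-$p_{j(i)}$ bookkeeping... more cleanly: in $A$ we have the chain of \emph{equalities of $A$-states} $\pi_A(x_i,s) = p_{j(i)} = \pi_A(x_{i+1},t)$, and coincidence of $\pi_{A_{k-1}}(\cdot,[s])$ with $\pi_{A_{k-1}}(\cdot,[t])$ gives $\pi_{A_{k-1}}(x_{i+1},[s]) = \pi_{A_{k-1}}(x_{i+1},[t]) = \pi_{A_{k-1}}([p_{j(i)}]) = \pi_{A_{k-1}}(x_i,[s])$. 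Walking $i = 0,1,\ldots,k-1$ around the cycle shows all of $\pi_{A_{k-1}}(x_0,[s]),\ldots,\pi_{A_{k-1}}(x_{k-1},[s])$ are a single state of $A_{k-1}$, and likewise with $t$, and that state is common to $s$ and $t$. Applying the folding $A_{k-1}\to A_k$ then collapses these (already equal) states to the single state $\pi_{A_k}(x,[s]) = \pi_{A_k}(y,[s]) = \pi_{A_k}(y,[t]) = \pi_{A_k}(x,[t])$ for all $x,y$ in the cycle, which is the claim.

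The main obstacle I anticipate is precisely bridging from "$[s],[t]$ identified in $A_m$" to the stronger "$\pi_{A_{k-1}}(\cdot,[s]) = \pi_{A_{k-1}}(\cdot,[t])$ on $\xn$" — one must be careful that $k$ as defined (minimal with equality \emph{on $X_Q$}) really does force full equality on $\xn$ at level $k-1$, which relies on the fact that $[s]$ and $[t]$ are genuinely distinct states of $A_{k-1}$ that merge under the single folding step to $A_k$, so their $\sim_{A_{k-1}}$-class forces agreement of the \emph{entire} transition map, not just its restriction to $X_Q$. Once that is pinned down, the cyclic chase is bookkeeping. A secondary subtlety is the base case / reconciling the two descriptions of $\disc{s,t,Q}$ (the "identity off $Y_Q$" piece versus the cycle piece), but both feed into the same conclusion and need only the observation that $\pi_A(x_i,s)$ and $\pi_A(x_{i+1},t)$ share a target state in $A$.
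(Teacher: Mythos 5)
There is a genuine gap in your argument, and it sits exactly where you flagged your "main obstacle": the claim that, by minimality of $k$, the states $[s]$ and $[t]$ of $A_{k-1}$ ``are distinct but get identified in $A_k$,'' so that $\pi_{A_{k-1}}(\cdot,[s])$ and $\pi_{A_{k-1}}(\cdot,[t])$ coincide on all of $\xn$. This is not implied by the hypotheses and in fact contradicts them. The hypothesis only asserts that the two transition maps agree on $X_Q$ at level $k$; it does not say $[s]$ and $[t]$ become a single state of $A_k$. Moreover, if $[s]$ and $[t]$ were identified in passing from $A_{k-1}$ to $A_k$, then we would have $[s]\sim_{A_{k-1}}[t]$, i.e.\ $\pi_{A_{k-1}}(\cdot,[s])=\pi_{A_{k-1}}(\cdot,[t])$ on all of $\xn$, hence certainly on $X_Q$, which would make $k-1$ a smaller valid choice and contradict the minimality of $k$. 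So the level-$(k-1)$ coincidence you rely on for the cycle chase is simply false, and the inference built on it collapses.

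The repair is small and brings you to essentially the paper's proof: run your cycle chase one level higher, at level $k$, where you \emph{are} given the required agreement — and you need it only on $X_Q$, which suffices because $\disc{s,t,Q}$ is a permutation of $X_Q$, so every letter $x_i$ in the cycle lies in $X_Q$. Concretely, from $\pi_A(x_i,s)=\pi_A(x_{i+1},t)$ one gets $\pi_{A_k}(x_i,[s])=\pi_{A_k}(x_{i+1},[t])$, and the hypothesis gives $\pi_{A_k}(x_{i+1},[t])=\pi_{A_k}(x_{i+1},[s])$ (since $x_{i+1}\in X_Q$); chaining these around the cycle yields the four-fold equality. Your identification of the key combinatorial fact $\pi_A(x,s)=\pi_A\bigl((x)\disc{s,t,Q},\,t\bigr)$ is exactly right and is what the paper uses; the detour to $k-1$ was unnecessary and introduced the error. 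Note also that minimality of $k$ plays no role in the proof beyond pinning down which level $k$ is — only the agreement on $X_Q$ at level $k$ is actually used.
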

\begin{proof}
    By assumption $\pi_{A_{k}}(\cdot, [s]) = \pi_{A_{k}}(\cdot, [t])$.
    
    An easy induction argument using the definition of $\disc{s,t,Q}$ now shows that for any $x, y \in \xn$ such that $y$ belongs to the orbit of $x$ under the action of $\disc{s,t,Q}$, $$\pi_{A_{k}}(x, [s]) = \pi_{A_{k}}(y, [s])= \pi_{A_{k}}(x, [t]) = \pi_{A_{k}}(y, [t]).$$ This follows since for any $x \in \xn$, $\pi_{A}(x, s) = \pi_{A}((x)\disc{s,t,Q}, t)$.

\end{proof}

We formalise below what we mean by ``maximising collapses''. This is a property that is captured in the underlying digraph of the automaton.

 Let $A$ be an automaton and let $A = A_0, A_1, \ldots, A_m=:B$ be a collapse chain. Let $G:=G_{A}$ be the underlying digraph of $A$. Define  a sequence $G:= G_0, G_{1}, \ldots$ as follows. Assuming $G_{i}$ is defined and the vertices of $G_i$ induce a partition of $Q_A$, $G_{i+1}$ is obtained from $G_i$ in the following manner. Let $\sim$ be the equivalence relation on the vertices $Q_{G_{i}}$ of $G_{i}$ that relates two vertices $p,q$ if and only if the following two conditions hold:
 \begin{itemize}
     \item there is a state $[p,q] \in A_m$ such that for any $r \in Q_A$ belonging either to $p$ or $q$, $r \in [p,q]$;
     \item  for every vertex $t \in  Q_{G_{i}}$ the number of edges from $q$ to $t$ is precisely the number of edges from $p$ to $t$.
 \end{itemize}
 If $p\in Q_{G_i}$ write $[p]_{i+1}$ for the equivalence class of $p$ under the relation $\sim$.  Set $Q_{G_{i+1}} = \{ [p]_{i+1} \mid p \in Q_{G_{i}}\}$. Now suppose $p, q \in Q_{G_{i}}$ and enumerate those elements of $[q]_{i+1}$ which have an incoming edge from a vertex in $[p]_{i+1}$ in some order as $q_{1}, q_2, \ldots, q_{r}$. For $1 \le j \le r$, let $k_{j}$ be the number of edges from $p$ to $q_{j}$ and set  
$ec(i+1,p,q)\seteq\sum_{1 \le j \le r} k_{j}$. Set $G_{i+1}$ to be the directed graph with vertices $Q_{G_{i+1}}$ and with $ec(i+1,p,q)$ many edges from $[p]_{i+1}$ to $[q]_{i+1}$ for each $[p]_{i+1}$, $[q]_{i+1}\in Q_{G_{i+1}}$.

We refer to the resulting sequence $G_0$, $G_1$, \ldots, as defined above as the \emph{amalgamation sequence of $G$ relative to $G_{A}$}. It is unique up to the choice of edge identifications. If $A$ is a synchronizing automaton and $B$ is the single state automaton, then the sequence $G_0$, $G_1$, \ldots,  is the of amalgamation sequence of $G$ as defined in the literature (see \cite{Williams73} for example).  By construction, for each natural $i$ the states of $G_i$ induce a partition of the states of $A$. This partition is finer than the partition of $Q_A$ induced by  $B$ but coarser than the partition induced by $A_i$.  It follows that after finitely many steps, the amalgamation sequence stabilises to a fixed digraph which induces precisely the same partition of $Q_A$ as $A_m$.

We maximise the collapses of an automaton $A$ by  relabelling $A$ such that the successive partitions of $Q_A$ induced  by its synchronizing sequence coincides with those induced by the amalgamation sequence of $G_A$. One can think of the amalgamation sequence as describing the ``ideal'' route to the final term in the synchronising sequence.

\begin{lemma}\label{lemma:collapseadherence}
    Let $A$ be an automaton with underlying digraph $G$. Let  $A= A_0, A_{1}, \ldots, A_m$ be a collapse chain of $A$ and let $G= G_0, G_1, \ldots, G_{l}$ be the amalgamation sequence of $G$ relative to $G_{A_m}$. Then there is a relabelling $B$ of $A$ with a collapse chain $B = B_0, B_1, \ldots, B_l = A_m$ such that the underlying digraph of $B_{i}$ is $G_{i}$; in particular, the partition of the state set of $A$ induced by $B_i$ is the same partition induced by $G_{i}$.
\end{lemma}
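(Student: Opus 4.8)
\textbf{Proof strategy for Lemma~\ref{lemma:collapseadherence}.}
The plan is to build the relabelling $B$ one level at a time, matching the amalgamation sequence $G = G_0, G_1, \ldots, G_l$ step by step, and at each stage correcting the edge labels so that the states that \emph{should} collapse according to $G_{i+1}$ actually do collapse in the synchronizing sequence of the partially-relabelled automaton. The engine for each correction is Lemma~\ref{Lemma:parallelegdes} together with Lemma~\ref{lemma:relabelling}: given a pair of states $s,t$ that lie in the same vertex of the final term $A_m$ and that distribute similarly over the appropriate set of (already-identified) states, Lemma~\ref{Lemma:parallelegdes} tells us that in the minimal term $A_k$ of the current collapse chain where $\pi(\cdot,[s])$ and $\pi(\cdot,[t])$ agree on the agreement alphabet, the disjoint cycles of the discriminant permutation $\disc{s,t,Q}$ are carried on parallel edges out of $[s]$ and $[t]$. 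Hence the cyclic permutation on each such cycle extends to a vertex-fixing automorphism $\rho_k$ of $G_{A_k}$, and relabelling $A$ by $(A_k,\rho_k)$ (Definition~\ref{def:relabelling}) forces $\pi(\cdot,s)$ and $\pi(\cdot,t)$ to coincide on the agreement alphabet in the relabelled automaton, while — by Lemma~\ref{lemma:relabelling} — not destroying any collapse that has already happened at earlier levels and only coarsening the induced partitions.

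Concretely, I would induct on $i$, maintaining the hypothesis that there is a relabelling $B^{(i)}$ of $A$ (via a finite composition of relabellings of the above type, all by automorphisms of terms at level $\ge i$ so that the first $i$ levels are already fixed) whose collapse chain towards $A_m$ has $j$-th term with underlying digraph $G_j$ for all $j \le i$. For the inductive step, consider the partition defining $G_{i+1}$: it identifies a vertex $p$ of $G_i$ with a vertex $q$ of $G_i$ exactly when they lie in the same vertex of $A_m$ \emph{and} have matching multiplicities of edges to every vertex of $G_i$. Each such identification is, at the level of $A$, the statement that the corresponding blocks of states distribute similarly over the (already-identified-at-level-$i$) state set; apply the relabelling produced by Lemma~\ref{Lemma:parallelegdes} to realise all these pending identifications simultaneously at level $i+1$. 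One must check that these relabellings for different pairs $(p,q)$ can be performed without interfering with each other — this is fine because the automorphisms all live on terms at level $\ge i+1$, they fix the already-stabilised structure, and the discriminant cycles for distinct pending identifications act on disjoint data once we work level by level. The bound $l \le m$ and the coarsening statement come for free from the last bullet of Lemma~\ref{lemma:relabelling}, and the eventual stabilisation of the amalgamation sequence to the partition of $A_m$ was already observed before the lemma.

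\textbf{Main obstacle.} I expect the genuinely delicate point to be the bookkeeping that the successive relabellings \emph{cohere}: each new relabelling is by an automorphism of a term $A_k$ of the \emph{current} collapse chain, and after relabelling the collapse chain itself changes (it can only get shorter, by Lemma~\ref{lemma:relabelling}), so one has to verify that the "level $k$" at which we apply the next correction is still $\ge i+1$ and that the partition induced at level $i+1$ of the new chain is exactly the $G_{i+1}$ partition — neither strictly finer (which would mean some pending collapse still hasn't happened) nor incompatible with $G_{i+1}$ being an amalgamation quotient (which is guaranteed because relabelling preserves the underlying digraph isomorphism class, so edge multiplicities between blocks are unchanged). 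Making this precise amounts to showing that "distribute similarly over the level-$i$ state set" is exactly the condition governing collapsibility at level $i+1$ in the relabelled automaton, which is where the identity $\pi_A(x,s) = \pi_A((x)\disc{s,t,Q},t)$ from the proof of Lemma~\ref{Lemma:parallelegdes} does the decisive work. Everything else — well-definedness of the composite relabelling, the isomorphism of underlying digraphs at each level, the final identification $B_l = A_m$ — is a routine unwinding of the definitions and of Theorem~\ref{thm:collapsechains}.
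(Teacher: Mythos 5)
Your proposal follows the paper's strategy closely: induct on the level $i$ of the amalgamation sequence, and at each stage use Lemma~\ref{Lemma:parallelegdes} to locate the parallel edges carrying the discriminant cycles, then relabel via a vertex-fixing automorphism as in Definition~\ref{def:relabelling}, with Lemma~\ref{lemma:relabelling} ensuring the previously achieved collapses are not undone and $l\le m$. So the skeleton is the same as the paper's.

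However, your resolution of what you correctly identify as the ``main obstacle'' is not right. You claim the pending identifications at a given level can be realised \emph{simultaneously} because ``the discriminant cycles for distinct pending identifications act on disjoint data once we work level by level.'' That is not true in general: if $s,t$ and $s,u$ (or even two unrelated pairs) must be merged at level $i+1$, the permutations $\disc{s,t,Q}$ and $\disc{s,u,Q}$ both live on the same agreement alphabet and can have overlapping, non-commuting cycle supports, so there is no single vertex-fixing automorphism whose effect on the edge labels simultaneously trivialises all the discriminants. The paper instead processes one pair $(s,t)$ at a time, and crucially uses a \emph{targeted} relabelling: the automorphism $\rho_k$ of $A_k$ is the identity everywhere except at the single state $[t]$, where it acts by $\disc{s,t}^{-1}$. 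With this specific choice the paper then argues that any other pair $(u,v)$ whose discriminant was already trivial (hence $u,v$ lie in a common state of every $A_j$ for $j\geq 1$, in particular $A_k$) has its outgoing edge labels modified \emph{identically} at $u$ and at $v$, so $\disc{u,v}$ stays trivial in the relabelled automaton; one then rebuilds the collapse chain (using Lemma~\ref{lemma:relabelling}) and repeats. Without this inner induction and the preservation check, the argument does not close; so the coherence step is the real content, not a routine unwinding, and your proposed justification for it would fail.
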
 
\begin{proof}
    We proceed by induction on the amalgamation sequence.
    
    We begin with the base case. Let $s,t \in Q_{A}$ be distinct such that $s$ and $t$ belong to the same state of $G_{1}$. This means that $s$ and $t$ distribute similarly over $Q_{A}$. Suppose that $\disc{s,t}$ is not trivial. 
    
    Let $k \in \N$ be minimal such that $s$ and $t$ belong to the same state of $A_{k+1}$. Note that since $\disc{s,t}$ is not trivial, then $k \ge 1$.  By Lemma~\ref{Lemma:parallelegdes}, for any $x,y$ which belong to the same orbit under $\disc{s,t}$, $$\pi_{A_{k}}(x, [s]) = \pi_{A_{k}}(y, [s]) = \pi_{A_{k}}(x, [t]) = \pi_{A_{k}}(x, [t]).$$
    
    Let $\lambda_{A_{k}}$ be defined such that $\lambda_{A_{k}}(\cdot, [q]): \xn \to \xn$ is trivial whenever $[q]$ is not equal to $[t]$. We set $\lambda_{A_{k}}(\cdot, [t]) = \disc{s,t}^{-1}$. We note that the transducer $A_{k}$ is induced by a vertex fixing automorphism of $A_{k}$. Furthermore, for any pair $(u,v) \ne (s,t)$ such that $u,v$ belong to the same state of $G_{1}$ and $\disc{u,v}$ is trivial, $[u] = [v]$ in $A_{k}$.
    
    Let $C$ be the relabelling of $A$ by the transducer $A_{k}$. Note that $Q_{C} = Q_{A}$ and $G$ remains the underlying digraph of $C$. It therefore follows that for any pair $u,v$ in $Q_A$ which distribute similarly over $Q_{A}$, $u,v$ still distribute similarly over $Q_{A}$ in $C$. If moreover, $\discT{u,v}{A}$ is trivial, then construction of $\lambda_{A_{k}}$, means $\discT{u,v}{C}$ remains trivial.  We note as well that $s,t$ distribute similarly over $Q_{C}$ in $C$, and $\discT{s,t}{C}$ is trivial. Lastly, by Lemma~\ref{lemma:relabelling}, $A_m$ belongs to a collapse chain of $C$.
    
    Applying an induction argument, there is an automaton $C$ a relabelling of $A$ such that for any pair $s,t \in Q_{A}$ which belong to the same state of $G_{1}$, $\discT{s,t}{C}$ is trivial. In particular, such $s,t$ satisfy, $\pi_{C}(\cdot, s) = \pi_{C}(\cdot, t)$.
    
    Now assume by induction that there is a relabelling $C$ of $A$ with a collapse chain $C = C_0, C_1, \ldots, C_{M} = A_{m}$ ($l \le M \le m)$ possessing the following property: for $0 \le i \le k < l$ two states $s,t \in Q_{A}$ which belong to the same state of $G_{i}$ belong to the same state of $C_{i}$.
    
    Let $s,t \in Q_{A}$ and suppose $s$ and $t$ belong to the same state of $G_{k+1}$ but do not belong to the same state of $C_{k+1}$.  We note that since the underlying digraph of $C_{k}$ is the same as $G_{k}$ and they induce the same partition of the state set $Q_{A}$, then $s$ and $t$ belong to distinct states of $C_{k}$ and so to distinct states of $G_{k}$. The fact that $s$ and $t$ belong to the same state of $G_{k+1}$ means that $[s]$ and $[t]$ distribute similarly over $Q_{C_k}$ but $\discT{[s],[t]}{C_{k}}$ is not trivial.
    
    Let $k< j \le l$ be minimal such that $s$ and $t$ belong to the same state of $C_{j+1}$. By Lemma~\ref{Lemma:parallelegdes} once more, we have the equalities: for any $x,y$ which belong to the same orbit under $\discT{[s],[t]}{C_{k}}$, $$\pi_{C_{j}}(x, [s]) = \pi_{C_{j}}(y, [s]) = \pi_{C_{j}}(x, [t]) = \pi_{C_{j}}(x, [t]).$$

    Let $\lambda_{C_{j}}$ be defined such that $\lambda_{C_{j}}(\cdot, [q]): \xn \to \xn$ is trivial whenever $[q]$ is not equal to $[t]$. We set $\lambda_{C_{j}}(\cdot, [t]) = \discT{[s],[t]}{C_{k}}^{-1}$. We note that the transducer $C_{j}$ is induced by a vertex fixing automorphism of $C_{j}$. Furthermore, for any pair $(u,v) \ne (s,t)$ such that $u,v$ belong to the same state of $G_{k+1}$ and $\discT{[u],[v]}{C_{k}}$ is trivial, $[u] = [v]$ in $C_{j}$.
    
    Let $D$ be the relabelling of $C$ by the transducer $C_{j}$. By Lemma~\ref{lemma:relabelling}, $A_m$ belongs to a collapse chain of $D$. Let $D:=D_0, D_{1}, \ldots, D_{L} := A_m $ be the collapse chain from $D$ to  $A_m$, noting that $l \le L \le M$.
    
    Let $u,v \in Q_{A}$ belong to the same state of $G_{i}$ for some $0 \le i \le k < l$. Then by the inductive assumption and Lemma~\ref{lemma:relabelling}, $u,v$ belong to the same state of $D_{i}$.
    
    Let $u,v \in Q_{A}$ belong to the same state of $G_{k+1}$ and suppose that $\discT{u,v}{C_{k}}$ is trivial. Note that since $\discT{u,v}{C_{k}}$  and $[u],[v]$ distribute similarly over $Q_{C_{k}}$, $[u] = [v]$ in $C_{k+1}$. Therefore, Lemma~\ref{lemma:relabelling} implies that $[u] = [v]$ in $D_{k+1}$ as well.
    
    Lastly observe that $[s] = [t]$ in $D_{k+1}$ by construction of $\lambda_{C_{k}}$ and the fact that states which are identified in $C_{k}$ remain identified in $D_{k+1}$. 
    
    The result now follows by induction.
\end{proof}

Let $A$ be an automaton, let $A= A_0, A_1, \ldots,A_m$ be a collapse chain of $A$ and $G_{A}=G_0, G_1,\ldots, G_{l}$ be the amalgamation sequence of $G_A$ relative to $A_m$. We say that $A$ is \textit{unimpeded relative to $A_m$} if the underlying digraph of $A_i$ is $G_i$. If, additionally, the collapse chain $A= A_0, A_1, \ldots,A_m$ is the synchronizing sequence of $A$ cut off at the point at which it stabilises, then we  say simply that \textit{$A$ is unimpeded}.

\begin{example}\label{exm:unimpededautomata}
The automaton in Figure~\ref{fig:Exmdigraph} and the de Bruijn graph $G(n,m)$ are examples of unimpeded automata. $\bigcirc$ 

\end{example}

The property of being unimpeded is invariant under relabellings induced by terms in the synchronizing sequence of an automaton but not necessarily by relabellings induced by terms in a general collapse chain.

\subsection{Applications of relabellings II: coordinating agreement along orbits}

This application of relabellings deals with the following scenario. We have an element of finite order $H \in \hn{n}$ which is induced by a pair $(A, \phi)$ where $A$ is an unimpeded support of $H$ realised by $\phi$. Suppose we have a pair $s,t$ of states of $Q_A$ which distribute similarly over $Q_A$, then all pairs $\phi^{i}$ and $t\phi^{i}$ also distribute similarly over $Q_A$ and, furthermore, the corresponding discriminant permutations must be trivial. We want to understand when it is possible to collapse the pairs $s\phi^{i}$, $t\phi^{i}$ in $A$ in order to obtain a smaller support of $H$. We consider first the case where $s$ and $t$ belong to the same orbit under $\phi$ and then the case where $s$ and $t$ belong to distinct orbits. The  focus is local initially: we consider pairs of states that distribute similarly along the orbit of a state. We will later  stitch the local pictures into a global one.

Lemmas~\ref{Lem:relabelling one orbit} and ~\ref{Lem:relabelling two orbit} which follow are illustrated in Example~\ref{Example:oneandtwoorbit}.

\begin{lemma}\label{Lem:relabelling one orbit}
    Let $A$ be a synchronising automaton and $\phi$ an automorphism of the underlying digraph of $A$. Fix $N \in \N$. Suppose the following conditions hold:
    \begin{itemize}
        \item $A$ has a single orbit of states;
        \item all edges of $A$ have the same orbit length $N$;
        \item there is an $r \in \N_{1}$ such that for any  $i \in \N$ and any $s \in Q_{A}$, the maps $\pi_{A}(\cdot, s\phi^{i})$ and $\pi_{A}(\cdot, s\phi^{i+r})$ coincide.
    \end{itemize}
    Then there is a relabelling $B$ of $A$ such that  
    \begin{itemize}
        \item the maps $\pi_{B}(\cdot, s\phi^{i})$ and $\pi_{B}(\cdot, s\phi^{i+r})$ coincide for any $i \in \N$ and any $s \in Q_A$;
        \item for any $x \in \xn$, the edges $(s\phi^{i},x,\pi_{B}(x, s\phi^i))\phi$ and $(s\phi^{i+r},x,\pi_{B}(x, s\phi^i))\phi$ have the same label for any $i \in  \N$;
        \item Let $B'$ be the folding of $B$ which induces the partition of $Q_B$ corresponding to the orbits of $Q_B$ under $\phi^{r}$. There is an induced action of $\phi$ on $B'$ such that edges of $B'$ have the same orbit length $N$.
    \end{itemize}
    
\end{lemma}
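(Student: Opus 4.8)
The plan is to obtain the first two bullets by an explicit ``twisting'' relabelling of $A$ by a vertex-fixing automorphism of $G_A$ (the case $k=0$ of Definition~\ref{def:relabelling}), and then to \emph{derive} the third bullet from the first two together with the elementary fact that $Q:=|Q_A|$ divides $N$. First some reductions and notation. Since $\pi_A(\cdot,s\phi^i)=\pi_A(\cdot,s\phi^{i+r})$ for all $i,s$, the same holds with $r$ replaced by $\gcd(r,Q)$; as $\phi^r$ and $\phi^{\gcd(r,Q)}$ have the same orbits on $Q_A$ and $r$-periodicity follows from $\gcd(r,Q)$-periodicity, we may assume $r\mid Q$ and write $Q=rd$ (this does not weaken the conclusion). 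As $A$ has one orbit of states, enumerate $Q_A=\{p_0,\dots,p_{Q-1}\}$ with $p_i\phi=p_{i+1}$ (indices mod $Q$). Put $\ell_i:=\lambda_{\mathscr{H}(A,\phi)}(\cdot,p_i)\colon\xn\to\xn$, the bijection recording how $\phi$ changes the label of an edge leaving $p_i$, and $\mu_i:=\ell_{i-1}\cdots\ell_0$ (so $\mu_0=\mathrm{id}$, $\mu_{i+1}=\ell_i\mu_i$), which gives the label of $e\phi^i$ when $e$ is the edge out of $p_0$ with the given label. One checks $\ell_i$ carries $\lets{p_i}{q}$ bijectively onto $\lets{p_{i+1}}{q\phi}$, so all cells of the partitions $\mathcal{P}_i:=\{\lets{p_i}{q}:q\in Q_A\}$ lying on one ``$\phi$-orbit of cells'' have equal size, and the hypothesis gives $\mathcal{P}_i=\mathcal{P}_{i+r}$. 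Comparing the source of $e$ with that of $e\phi^N$ gives $Q\mid N$; writing $N=Qm$, the fact that every edge is on a $\phi$-orbit of length $N$ translates (via $\mu_{Qt}=\mu_Q^{\,t}$, which holds since $\ell_{i+Q}=\ell_i$) into: $\mu_Q=\ell_{Q-1}\cdots\ell_0$ has every point of order exactly $m$, and $\mu_Q$ fixes each cell of $\mathcal{P}_0$ set-wise; in particular $m\mid|\lets{p_0}{q}|$ for each out-neighbour $q$ of $p_0$.

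\textbf{Construction of $B$.} For $0\le j<r$ choose a permutation $g_j$ of $\xn$ carrying each $\lets{p_j}{q}$ bijectively onto $\lets{p_{j+1}}{q\phi}$, and set $g_{j+r}:=g_j$; put $G_i:=g_{i-1}\cdots g_0$ and $H:=g_{r-1}\cdots g_0$, so $G_Q=H^d$, and observe $H$ permutes the cells of $\mathcal{P}_0$ in orbits of size $d$ (via $q\mapsto q\phi^r$), whence $H^d$ fixes each cell set-wise. The key point is that the $Q$ ``arrows'' whose composite is $H^d$ restricted to a cell $C=\lets{p_0}{q}$ --- namely $g_{j\bmod r}$ restricted to $\lets{p_{j\bmod r}}{q\phi^{j}}$ for $j=0,\dots,Q-1$ --- are restrictions of the $g$'s to $Q$ \emph{pairwise distinct} cells, hence mutually unconstrained bijections between cells of equal size, and the cell-cycles of distinct $H$-orbits of cells use disjoint sets of cells. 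So, working on each $H$-orbit of cells independently, I fix all arrows but the last arbitrarily and solve for the last so that $H^d|_C$ is a prescribed product of $m$-cycles on $C$ (possible as $m\mid|C|$). For such a choice, on every cell of $\mathcal{P}_0$ both $H^d$ and $\mu_Q$ are products of $m$-cycles, hence conjugate there, so there is $\sigma_0\in\sym{\xn}$ preserving each cell of $\mathcal{P}_0$ with $\sigma_0^{-1}H^d\sigma_0=\mu_Q$. Set $\sigma_i:=G_i\sigma_0\mu_i^{-1}$; one checks $\sigma_i$ preserves $\mathcal{P}_i$ (since $\mu_i^{-1}\colon\mathcal{P}_i\to\mathcal{P}_0$, then $\sigma_0$ acts cell-wise on $\mathcal{P}_0$, then $G_i\colon\mathcal{P}_0\to\mathcal{P}_i$) and that $\sigma_Q=H^d\sigma_0\mu_Q^{-1}=\sigma_0$; thus $(\sigma_i)_i$ is a vertex-fixing automorphism $\rho$ of $G_A$, and I let $B$ be the relabelling of $A$ by $(A,\rho)$. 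By construction $\lambda_{\mathscr{H}(B,\phi)}(\cdot,p_i)=\sigma_{i+1}\ell_i\sigma_i^{-1}=G_{i+1}G_i^{-1}=g_i$, which is $r$-periodic in $i$; this is the second bullet. The first bullet is automatic: a level-$0$ relabelling does not change the transition function at all, because each $\sigma_p$ preserves the target partition $\mathcal{P}_p$, so $\pi_B=\pi_A$ and $\mathcal{P}_{p_i}=\mathcal{P}_{p_{i+r}}$ already gives $\pi_B(\cdot,p_i)=\pi_B(\cdot,p_{i+r})$.

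\textbf{Deducing the third bullet.} Let $B'$ be the folding of $B$ by $\phi^r$-orbits of states --- legitimate by the first bullet --- and let $\bar\phi$ be the map on $Q_{B'}$ induced by $\phi$. By the second bullet $\bar\phi$ is a well-defined digraph automorphism of $G_{B'}$: the $\phi$-image of an edge of $B'$ does not depend on the chosen lift, since two lifts differ by some $\phi^{jr}$ and iterating the second bullet makes their $\phi$-images have equal labels. For an edge $\bar e$ of $B'$ let $F(\bar e)\subseteq E_B$ be its fibre; by the first bullet all edges of $F(\bar e)$ share a common target $w\in Q_B$, and projection intertwines $\phi$ and $\bar\phi$, so $\phi^k(F(\bar e))=F(\bar\phi^{\,k}\bar e)$. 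Hence $\bar\phi^{\,k}\bar e=\bar e$ forces $w\phi^k=w$, i.e.\ $Q\mid k$; and given $Q\mid k$, the equality $\phi^k(F(\bar e))=F(\bar e)$ reduces --- using that, by the second bullet, the label of $f\phi^k$ depends on an edge $f$ only through its own label and the $\phi^r$-orbit of its source --- to $\phi^k$ fixing a chosen lift of $\bar e$, i.e.\ $N\mid k$. Since $Q\mid N$, this says $\bar\phi^{\,k}\bar e=\bar e$ iff $N\mid k$, so every edge of $B'$ lies on a $\bar\phi$-orbit of length $N$, and $\bar\phi$ is the claimed induced action of $\phi$.

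\textbf{Main obstacle.} I expect the hard part to be the construction in the second paragraph: the bookkeeping that the cell-wise ``return maps'' $H^d|_{\lets{p_0}{q}}$ can be prescribed freely enough --- the independence of the arrows within and across cell-cycles --- to be simultaneously conjugated onto $\mu_Q$ while keeping every $\sigma_i$ inside the admissible group $\prod_{q}\sym{\lets{p_i}{q}}$ of permutations preserving $\mathcal{P}_i$. Once the labels of $B$ are $r$-periodic, the passage to $B'$ and the orbit-length count are essentially routine.
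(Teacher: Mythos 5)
Your proof is correct, and it takes a noticeably different (and cleaner) route than the paper's. The paper's proof defines the relabelling \emph{operationally}: it iterates over the letters $x$ at a fixed base state $s_0$ and, for each edge $e=(s_0,x,s_i)$, copies labels along shifted pieces of the $\phi$-orbit of $e$ onto the orbits of the ``sibling'' edges $e'_j=(s_0,x_j,s_i\phi^{jr})$; it then asserts (without much further verification) that the resulting automaton has the three properties and that the quotient $B'$ inherits the action. Your proof instead works \emph{algebraically} inside the group of vertex-fixing automorphisms of $G_A$: you encode the target behaviour by the $r$-periodic permutations $g_j$, isolate the single obstruction as the conjugacy $\sigma_0^{-1}H^d\sigma_0=\mu_Q$ within the cell-preserving subgroup, show this conjugacy is always solvable (because all the arrows $g_{j\bmod r}\restriction_{\lets{p_{j\bmod r}}{q\phi^j}}$ along a cell-cycle are restrictions to pairwise distinct cells, hence unconstrained, so one can force $H^d\restriction_C$ to be a product of $m$-cycles matching $\mu_Q\restriction_C$), and then propagate to $\sigma_i=G_i\sigma_0\mu_i^{-1}$. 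This buys you an explicit invariant that must be matched, a one-line verification of $\sigma_Q=\sigma_0$ and of cell-preservation, and a clean formula $\lambda_{\mathscr{H}(B,\phi)}(\cdot,p_i)=g_i$ for the post-relabelling shift maps. Your deduction of the third bullet from the first two — using $Q\mid N$, that all lifts of an edge of $B'$ share a common target in $Q_B$, and that for $Q\mid k$ the label shift of $\phi^k$ is constant on $\phi^r$-orbits of sources — is also more detailed than what the paper records, and I verified it holds. The only points worth tightening in a final write-up are cosmetic: make explicit that the cells $\lets{p_{j\bmod r}}{q\phi^j}$ ($0\le j<Q$) all have the common size $|\lets{p_0}{q}|$ (so the free choice of the last arrow is legitimate), and that distinct $H$-orbits of cells visit disjoint families of cells at every index $j\bmod r$, so the cell-cycle choices are genuinely independent across orbits; both follow from the bijections $\ell_i:\lets{p_i}{q}\to\lets{p_{i+1}}{q\phi}$ and the partition structure, as you indicate.
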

\begin{proof}
    Let $r$ be minimal such that the third hypothesis holds. There is an $m \in \N$ such that the orbit length of any state of $A$ is $mr$.

    Let $a \in \N$ be such that $N = amr$.

    By  the hypothesis, given an $x \in \xn$, and  $1 \le i <m$, the edges $(s,x,\pi_{A}(x,s))$, $(s\phi^{ir},x,\pi_{A}(x,s))$ have disjoint orbits.

    Fix a state $s_0 \in Q_A$. Let $s_{ir + j} = s_0\phi^{ir +j}$ for all $0 \le i <m$ and $0 \le j < r-1$. We define the relabelling map $\lambda_{A}$ inductively as follows.

    Let $x \in \xn$ be smallest such that $\lambda_{A}(x,s_0)$ is not defined.

    Let $i$ so that $s_i = \pi_{A}(x,s_0)$. For $0 \le j \le m-1$, set $x_{j}$ as the label of the edge $(s_0, x, s_i)\phi^{jr}$. Note that by the assumption on $r$, there is an edge $(s_0, x_j, s_i\phi^{jr})$.

    The relabelling map $\lambda_{A}$ is defined such that the following holds: for all $0 \le l \le mr-1$ and $0 \le j \le m-1$ the label of the edge $(s_0,x_j,s_{i}\phi^{jr})\phi^{l}$ is set as the  label of the edge $(s_{jr}, x_j,s_{i}\phi^{jr})\phi^{l}$.

    Let $B'$ be as in the statement of the lemma. Let $\iota: Q_B \to B'$ be defined such that  $s\iota = s\iota =[s\phi^{jr}]$. The map $\iota$ extends to an automata isomorphism that preserves edge labels: i.e $(s,x,t)\iota = ([s],x,[t])$. Let $\phi'$ be the automorphism of $G_{B'}$ be uniquely defined by the rule $([s],x,[t])\phi' = (s,x,t)\phi\iota$.

\end{proof}

\begin{lemma}\label{Lem:relabelling two orbit}
    Let $A$ be a synchronizing automaton and $\phi$ an automorphism of the underlying digraph $G_{A}$ of $A$. Let $s,t,p$ be states of $A$ such that:
    \begin{itemize}
    \item  $\lets{s}{p} \ne \emptyset$;
        \item  for $i,j \in \N$ and $x\in \xn$,  $\pi_{A}(x,s\phi^{i}) = p\phi^{j}$ if and only if $\pi_{A}(x,t\phi^{i}) = p\phi^{j}$;
        \item the orbits of $s$ and $t$ are distinct and have equal length $l$; and
        \item  any edge $(s,x, p\phi^{j})$ has the same orbit length as the corresponding edge $(t,x,p\phi^{j})$.
    \end{itemize}
     Then there is a relabelling $B$ of $A$ such that the induced automorphism $\varphi$ of $G_{B}$ satisfies: for any $i,j \in \N$, $\lets{s\phi^{i}}{p\phi^{j}} = \lets{t\phi^{i}}{p\phi^{j}}$, and for any $x \in \lets{s\phi^{i}}{p\phi^{j}}$, the labels of the edges $(s\phi^{i}, x,p\phi^{j})\varphi$ and $(t\phi^{i}, x,p\phi^{j})\varphi$  coincide.
\end{lemma}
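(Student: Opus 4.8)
The plan is to build $B$ by a relabelling of $A$ that changes only the edges whose source lies in the $\phi$-orbit of $s$ and whose target lies in the $\phi$-orbit of $p$, leaving every other edge of $A$ (in particular every edge out of the $\phi$-orbit of $t$, which is disjoint from that of $s$ by the third hypothesis) unchanged. Concretely I would produce, for each state $q$ of $A$, a permutation $\sigma_q$ of $\xn$ which is the identity outside the set of letters $x$ with $\pi_A(x,q)$ in the $\phi$-orbit of $p$, which preserves every fibre of $\pi_A(\cdot,q)$, and which is the identity whenever $q$ does not lie in the $\phi$-orbit of $s$; the family $(\sigma_q)_q$ determines a vertex-fixing automorphism of $G_A$, and $B$ is then the relabelling of $A$ by it, in the sense of Definition~\ref{def:relabelling} (the case $A_0 = A$, $\rho_0$ vertex-fixing on $G_A$). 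The work is in choosing the $\sigma_q$ along the orbit of $s$.

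First I would record the relevant return permutations. Let $l_p$ be the orbit length of $p$, and for $i\in\N$ let $P_i$ be the set of letters $x$ such that $\pi_A(x,s\phi^i)$ lies in the $\phi$-orbit of $p$; by the second hypothesis this is also the set of letters $x$ with $\pi_A(x,t\phi^i)$ in that orbit, and $P_i$ depends only on $i\bmod l$. The action of $\phi$ on edges induces bijections $\alpha_i\colon P_i\to P_{i+1}$ and $\beta_i\colon P_i\to P_{i+1}$, where $\alpha_i(x)$ (resp.\ $\beta_i(x)$) is the label of $(s\phi^i,x,\cdot)\phi$ (resp.\ $(t\phi^i,x,\cdot)\phi$); both carry $\lets{s\phi^i}{p\phi^j}$ to $\lets{s\phi^{i+1}}{p\phi^{j+1}}$. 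Hence $\gamma_s:=\alpha_{l-1}\circ\cdots\circ\alpha_0$ and $\gamma_t:=\beta_{l-1}\circ\cdots\circ\beta_0$ are permutations of $P_0$, each shifting the target index $j$ of $p\phi^j$ by $l$ modulo $l_p$. A short computation shows the $\phi$-orbit of an edge $(s,x,p\phi^j)$ has length $l$ times the cycle length of $x$ under $\gamma_s$, and likewise for $t$ and $\gamma_t$; so the fourth hypothesis says precisely that every $x\in P_0$ has the same cycle length under $\gamma_s$ as under $\gamma_t$.

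Next I would colour each $x\in P_0$ by the coset of $\langle l\rangle$ in $\Z/l_p$ containing the index $j$ with $\pi_A(x,s)=p\phi^j$. Each cycle of $\gamma_s$ lies in a single colour coset and meets each of the $l_p/\gcd(l,l_p)$ elements of that coset the same number of times (and likewise for $\gamma_t$), so the previous paragraph forces $\gamma_s$ and $\gamma_t$ to have, for every coset $C$ and every $d$, the same number of $d$-cycles lying in $C$. Consequently there is a colour-preserving bijection $\sigma_0\colon P_0\to P_0$ with $\sigma_0\gamma_s\sigma_0^{-1}=\gamma_t$: pair up the $d$-cycles of $\gamma_s$ and of $\gamma_t$ inside each colour coset, align their base points by colour, and transport labels along the cycles. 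Then I set $\sigma_i:=(\beta_{i-1}\circ\cdots\circ\beta_0)\circ\sigma_0\circ(\alpha_{i-1}\circ\cdots\circ\alpha_0)^{-1}$ for $0\le i<l$, and $\sigma_{s\phi^i}:=\sigma_{i\bmod l}$ extended by the identity off $P_{i\bmod l}$. Since each $\alpha_i,\beta_i$ shifts colours by one and $\sigma_0$ is colour-preserving, every $\sigma_i$ preserves colours, hence every fibre of $\pi_A(\cdot,s\phi^i)$; and the conjugacy relation $\sigma_0\gamma_s\sigma_0^{-1}=\gamma_t$ is exactly what makes the recursion $\sigma_{i+1}\circ\alpha_i=\beta_i\circ\sigma_i$ close up consistently around the orbit of $s$, so $\sigma_{s\phi^i}$ is well defined. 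This fixes $B$ and its induced automorphism $\varphi$.

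Finally I would verify the two conclusions. Since each $\sigma_{s\phi^i}$ fixes $\lets{s\phi^i}{p\phi^j}$ setwise and the edges out of the orbit of $t$ are untouched, $\lets{s\phi^i}{p\phi^j}$ and $\lets{t\phi^i}{p\phi^j}$ are, in $B$, the same subsets of $\xn$ as in $A$, so they coincide by the second hypothesis. For the label-matching, let $x\in\lets{s\phi^i}{p\phi^j}$ in $B$ correspond to the edge $(s\phi^i,a,p\phi^j)$ of $A$, so $x=\sigma_{s\phi^i}(a)$; unwinding the definition of $\varphi$ (transport along the natural digraph isomorphism, apply $\phi$, transport back) and using $\sigma_{i+1}\circ\alpha_i=\beta_i\circ\sigma_i$ shows the label of $(s\phi^i,x,p\phi^j)\varphi$ in $B$ is $\beta_i(x)$, which is the label of $(t\phi^i,x,p\phi^j)\phi$ in $A$ and hence, as the orbit of $t$ is not relabelled, also the label of $(t\phi^i,x,p\phi^j)\varphi$ in $B$. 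The main obstacle is the bookkeeping in the two middle paragraphs: pinning down the return permutations $\gamma_s,\gamma_t$, checking that the fourth hypothesis translates exactly into equality of ``coloured cycle type'' — which is what lets one find a \emph{target-respecting} conjugator $\sigma_0$ — and verifying that the single conjugacy relation $\sigma_0\gamma_s\sigma_0^{-1}=\gamma_t$ suffices for the propagated relabellings $\sigma_i$ to be consistent around the orbit of $s$.
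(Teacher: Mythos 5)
Your proof takes a genuinely different, and more careful, route than the paper's. The paper's proof is very terse: it matches the $\phi$-orbit of each edge $(t,x,p\phi^{i})$ with the orbit of the like-labelled edge $(s,x,p\phi^{i})$, and transports labels along the pair of orbits. This direct matching is not guaranteed to give a bijection of letters at each state: two distinct $t$-orbits, begun at different letters, can be sent to the \emph{same} $s$-orbit, making the resulting $\lambda_A(\cdot,t)$ non-injective. For instance, take $s,t,p$ all $\phi$-fixed, with four parallel edges from each of $s$ and $t$ to $p$, and let the return permutations on the letter set $\{0,1,2,3\}$ be $\gamma_s=(0\;1)(2\;3)$ and $\gamma_t=(0\;2)(1\;3)$; all four hypotheses hold, yet the paper's construction sends both $1$ and $2$ to the letter $1$ at state $t$. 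Your formulation via return permutations and a target-preserving conjugator $\sigma_0$ with $\sigma_0\gamma_s\sigma_0^{-1}=\gamma_t$ is precisely the ingredient needed: the conjugacy relation is what lets the propagated relabellings $\sigma_i$ close up consistently around the orbit, and it makes the well-definedness of the relabelling transparent where the paper leaves it implicit (and, as written, unjustified).

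One imprecision should be tightened. You need $\sigma_0$ to preserve each \emph{fibre} of $\pi_A(\cdot,s)$, i.e.\ the actual target index $j$ with $\pi_A(x,s)=p\phi^{j}$, not merely its coset modulo $\langle l\rangle$; but you only claim $\sigma_0$ is ``colour-preserving'' (coset-preserving), and the deduction ``$\sigma_i$ preserves colours, hence every fibre of $\pi_A(\cdot,s\phi^{i})$'' is a non sequitur as stated, since colours are strictly coarser than fibres. The fix is immediate from your own observation that each $d$-cycle of $\gamma_s$ (or of $\gamma_t$) in a coset $C$ visits every element of $C$ equally often: when pairing a $d$-cycle of $\gamma_s$ with one of $\gamma_t$, choose base points with the \emph{same target index} (each cycle can be rotated so as to start at any index of $C$). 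Then, since $\gamma_s$ and $\gamma_t$ both advance the target index by $l$, the transported $\sigma_0$ matches target indices throughout, which is what makes it a legitimate vertex-fixing relabelling at $s$, and the rest of your argument goes through verbatim.
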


\begin{proof}
    This is a more straight-forward relabelling operation than the previous case. We simply match the orbits of $t$ along $p$ with those of $s$ along $p$. 
    
    

    We define the relabelling map $\lambda_{A}$ inductively.
    
    First, for any pair $(x,u) \in \xn \times Q_{A}$ such that $(u,x, \pi(x,u))$ is not an edge from a state in the orbit of $t$ to a state in the orbit of $p$, set $\lambda_{A}(x,u) =x$.
    
    Second, let $x \in \xn$ be smallest such that $(t,x, p\phi^{i})$ is an edge for some $i$ where $\lambda_{A}(x,t)$ is not defined. Let $N$ be the length of the orbit of the edge $(t,x, p\phi^{i})$.  Let $x_j$ be the label of the edge $(t,x, p\phi^{i})\phi^{j}$ for $0 \le j <N$.
    Set $\lambda_{A}(x_j,t\phi^{j}) =y$ where $y$ is the label of the edge $(s,x, p\phi^{i})\phi^{j}$.
    
    This inductively defined relabelling map $\lambda_{A}$ is given by a vertex fixing automorphism and induces the required relabelling of $A$.
    
\end{proof}

\begin{rem}
Note that in both  Lemmas~\ref{Lem:relabelling two orbit} and \ref{Lem:relabelling one orbit}, $B$ and $A$ are isomorphic as automata and so we  write $(A, \varphi)$ for $(B, \varphi)$ with the relabelling understood.
\end{rem}

We have an illustrative example.

\begin{example}\label{Example:oneandtwoorbit}

We illustrate the proofs of Lemma~\ref{Lem:relabelling one orbit} and Lemma~\ref{Lem:relabelling two orbit}. The automaton and elements of $\hn{n}$ we use in these example, as will be seen, arise as part of our process for showing that $H$ is conjugate to a $6$-cycle (see Examples~\ref{Example:addingshadowstatestoA}). However, we felt it would be of benefit to the reader to illustrate the relabelling processes now.  The notation in what follows has been chosen to match the two examples referred to.

\textit{Lemma~\ref{Lem:relabelling one orbit}}] 

We consider the element $H'$ which is the automaton depicted to the left of Figure~\ref{fig:Exmoneround}. It acts by an automorphism $\varphi_{H'}$ on the automaton $\bar{A}$ to the right of Figure~\ref{fig:Exmoneround} (which, an observant reader might notice, is the domain and range automaton of $H'$) as follows. The action on the vertices is the three cycle $(a_1 \, a_3 \, a_5)$; the action on the edges is determined by the requirement that $H' = \mathscr{H}(\bar{A}, \varphi_{H'})$ (equality in this case as $\bar{A}$ is the domain graph of $A$). Notice that all edges are on orbits of length $6$.

\begin{figure}[htb]
\begin{center}
\scalebox{0.85}{
\begin{tikzpicture}[shorten >=0.5pt,node distance=3cm,on grid,auto] 
\begin{scope}[xshift=0cm]
    \node[state] (q_1) [xshift=4cm,yshift=0cm] {$a_1$};
   \node[state] (q_3) [xshift=4cm,yshift=-3cm] {$a_5$}; 
   \node[state] (q_5)[xshift=8cm,yshift=-3cm] {$a_3$}; 
    \path[->] 
    (q_1) edge [in=140,out=320]  node [xshift=-0.5cm,yshift=0.5cm,swap]  {$4|2$} node [xshift=0.1cm,yshift=-0.03cm,swap]  {$5|3$} (q_5)
          edge [in=95,out=85, loop] node [xshift=-0.4cm,yshift=-0.3cm,swap] {$0|4$} node [xshift=0.4cm,yshift=-0.3cm,swap] {$1|5$} ()
          edge [out=260,in=100]  node [xshift=0cm,yshift=0.3cm,swap] {$2|1$} node [xshift=0cm,yshift=-0.3cm,swap]  {$3|0$} (q_3)
    (q_3) edge  node [xshift=0.8cm, yshift=0.3cm] {$1|4$}  node [xshift=0.8cm, yshift=-0.3cm] {$0|5$} (q_1)
          edge node [xshift=0.15cm, yshift=-0.1cm]  {$4|2$ \quad $5|3$} (q_5) 
          edge [in=275,out=265, loop] node [xshift=-0.3cm,yshift=0.2cm,swap] {$2|1$} node [xshift=0.3cm, yshift=0.2cm,swap] {$3|0$} ()
    (q_5) edge [in=330,out=135]  node [xshift=-0.2cm,yshift=0cm,swap] {$1|5$} node [xshift=0.2cm, yshift=-0.4cm,swap] {$0|4$} (q_1)
          edge [out=187,in=353] node [xshift=0.7cm,yshift=-0.6cm,swap] {$2|0$} node [xshift=-0.4cm,yshift=-0.6cm,swap] {$3|1$} (q_3) 
          edge [in=355,out=5, loop] node [xshift=0.3cm, yshift=0.4cm,swap] {$4|2$} node [xshift=0.3cm, yshift=-0.4cm,swap] {$5|3$} ();
    
\end{scope}
\begin{scope}[xshift=8cm]
    \node[state] (q_1) [xshift=4cm,yshift=0cm] {$a_1$};
   \node[state] (q_3) [xshift=4cm,yshift=-3cm] {$a_5$}; 
   \node[state] (q_5)[xshift=8cm,yshift=-3cm] {$a_3$}; 
    \path[->] 
    (q_1) edge [in=140,out=320]  node [xshift=-0.5cm,yshift=0.5cm,swap]  {$4$} node [xshift=0.1cm,yshift=-0.03cm,swap]  {$5$} (q_5)
          edge [in=95,out=85, loop] node [xshift=-0.4cm,yshift=-0.3cm,swap] {$0$} node [xshift=0.4cm,yshift=-0.3cm,swap] {$1$} ()
          edge [out=260,in=100]  node [xshift=0cm,yshift=0.3cm,swap] {$2$} node [xshift=0cm,yshift=-0.3cm,swap]  {$3$} (q_3)
    (q_3) edge  node [xshift=0.5cm, yshift=0.3cm] {$1$}  node [xshift=0.5cm, yshift=-0.3cm] {$0$} (q_1)
          edge node [xshift=0.15cm, yshift=-0.1cm]  {$4$ \quad $5$} (q_5) 
          edge [in=275,out=265, loop] node [xshift=-0.3cm,yshift=0.2cm,swap] {$2$} node [xshift=0.3cm, yshift=0.2cm,swap] {$3$} ()
    (q_5) edge [in=330,out=135]  node [xshift=-0.2cm,yshift=0cm,swap] {$1$} node [xshift=0.2cm, yshift=-0.4cm,swap] {$0$} (q_1)
          edge [out=187,in=353] node [xshift=0.5cm,yshift=-0.5cm,swap] {$2$} node [xshift=-0.2cm,yshift=-0.5cm,swap] {$3$} (q_3) 
          edge [in=355,out=5, loop] node [xshift=0cm, yshift=0.4cm,swap] {$4$} node [xshift=0cm, yshift=-0.4cm,swap] {$5$} ();
    
\end{scope}
\end{tikzpicture}
}
\end{center}
\caption{The element $H' \in \hn{6}$ and the automaton $\bar{A}$.}\label{fig:Exmoneround}
\end{figure}%

We follow the proof of Lemma~\ref{Lem:relabelling one orbit}. For this example, $m=3$ and $r=1$.

Set $s_0 = a_1$. In this first step, we take $x = 0$. We only need one step of the relabelling process.

The relabelling map $\lambda_{A}$ is expressed by the following matrix equality:
$$
\begin{pNiceMatrix}[first-row]
a_1 & a_3 & a_5 \\
(0,1) & (4,5) & (2,3) \\
(2,3) & (0,1) & (4,5) \\
(4,5) & (2,3) & (0,1) 

\end{pNiceMatrix} = \begin{pNiceMatrix}
(0,1) & (4,5) & (2,3) \\
(2,3) & (0,1) & (4,5) \\
(4,5) & (2,3) & (1,0) 

\end{pNiceMatrix}
$$
which is read as follows: for an entry $(x,y)$ under column header $a_i$ in the matrix on the LHS, if $(u,v)$ is the corresponding entry in the matrix on the RHS, we set $\lambda_{\overline{A}}(x, a_i) = u$ and $\lambda_{\overline{A}}(y, a_i) = v$.

 The resulting transducer is the element $J \in \hn{6}$ shown in Figure~\ref{fig:Exmconjugatorround2}. 
\begin{figure}[htb]
\begin{center}
\scalebox{0.85}{
\begin{tikzpicture}[shorten >=0.5pt,node distance=3cm,on grid,auto] 
\node [xshift=9cm, yshift=-1.5cm] {$\mathlarger{=}$};
\begin{scope}[xshift=0cm]
    \node[state] (q_1) [xshift=4cm,yshift=0cm] {$a_1$};
   \node[state] (q_3) [xshift=4cm,yshift=-3cm] {$a_5$}; 
   \node[state] (q_5)[xshift=8cm,yshift=-3cm] {$a_3$}; 
    \path[->] 
    (q_1) edge [in=140,out=320]  node [xshift=-0.5cm,yshift=0.5cm,swap]  {$4|4$} node [xshift=0.1cm,yshift=-0.03cm,swap]  {$5|5$} (q_5)
          edge [in=95,out=85, loop] node [xshift=-0.4cm,yshift=-0.3cm,swap] {$0|0$} node [xshift=0.4cm,yshift=-0.3cm,swap] {$1|1$} ()
          edge [out=260,in=100]  node [xshift=0cm,yshift=0.3cm,swap] {$2|2$} node [xshift=0cm,yshift=-0.3cm,swap]  {$3|3$} (q_3)
    (q_3) edge  node [xshift=0.8cm, yshift=0.3cm] {$1|0$}  node [xshift=0.8cm, yshift=-0.3cm] {$0|1$} (q_1)
          edge node [xshift=0.15cm, yshift=-0.1cm]  {$4|4$ \quad $5|5$} (q_5) 
          edge [in=275,out=265, loop] node [xshift=-0.3cm,yshift=0.2cm,swap] {$2|2$} node [xshift=0.3cm, yshift=0.2cm,swap] {$3|3$} ()
    (q_5) edge [in=330,out=135]  node [xshift=-0.2cm,yshift=0cm,swap] {$1|1$} node [xshift=0.2cm, yshift=-0.4cm,swap] {$0|0$} (q_1)
          edge [out=187,in=353] node [xshift=0.7cm,yshift=-0.6cm,swap] {$2|2$} node [xshift=-0.4cm,yshift=-0.6cm,swap] {$3|3$} (q_3) 
          edge [in=355,out=5, loop] node [xshift=0.3cm, yshift=0.4cm,swap] {$4|4$} node [xshift=0.3cm, yshift=-0.4cm,swap] {$5|5$} ();
    
\end{scope} 
\begin{scope}[xshift=8cm]
    \node[state] (q_1) [xshift=4cm,yshift=0cm] {$a_1$};
   \node[state] (q_3) [xshift=4cm,yshift=-3cm] {$a_5$}; 
    \path[->] 
    (q_1) edge [in=95,out=85, loop] node [xshift=-0.4cm,yshift=-0.3cm,swap] {$0|0$} node [xshift=0.4cm,yshift=-0.3cm,swap] {$1|1$}
    node [xshift=-0.4cm,yshift=0.3cm,swap] {$4|4$} node [xshift=0.4cm,yshift=0.3cm,swap] {$5|5$}()
          edge [out=260,in=100]  node [xshift=0cm,yshift=0.3cm,swap] {$2|2$} node [xshift=0cm,yshift=-0.3cm,swap]  {$3|3$} (q_3)
    (q_3) edge  node [xshift=0.8cm, yshift=0.3cm] {$1|0$}  node [xshift=0.8cm, yshift=-0.3cm] {$0|1$}
       node [xshift=1.5cm, yshift=0.3cm] {$4|4$}  node [xshift=1.5cm, yshift=-0.3cm] {$5|5$} (q_1)
          edge [in=275,out=265, loop] node [xshift=-0.3cm,yshift=0.2cm,swap] {$2|2$} node [xshift=0.3cm, yshift=0.2cm,swap] {$3|3$} ();
    
\end{scope}

\end{tikzpicture}
}
\end{center}
\caption{The relabelling transducer $J$.}\label{fig:Exmconjugatorround2}
\end{figure}

\textit{Lemma~\ref{Lem:relabelling two orbit}}

For this example we consider the element $H$ of Figure~\ref{fig:ExampleACallup} acting as an automorphism $\psi_{H}$ on the automaton $A'$ depicted in Figure~\ref{fig:ExmaddingshaowstoB}. On the vertices of  $A'$, $\psi_{H}$ induces the permutation $$(p_0 \, p_1 \, p_2 \, p'_0 \, p'_1 \, p'_2) (q_0 \, q_1 \, q_2);$$ the action on edges is determined by the fact that $H$ must be the minimal representative of $\mathscr{H}(A', \psi_{H})$.
\begin{figure}[htb]
\begin{center}
\scalebox{0.7}{    
\begin{tikzpicture}[shorten >=0.5pt,node distance=3cm,on grid,auto,scale=1, transform shape] 
\begin{scope}[xshift=0cm]
    \node[state] (p_0) [xshift=-2cm,yshift=1.5cm]   {$p'_0$}; 
    \node[state] (q_0)   {$p_0$}; 
    \node[state] (q_1) [xshift=4cm,yshift=0cm] {$q_0$};
   \node[state] (q_3) [xshift=4cm,yshift=-3cm] {$q_2$}; 
   \node[state] (q_5)[xshift=8cm,yshift=-3cm] {$q_1$}; 
   \node[state] (q_4)[xshift=8cm,yshift=0cm] {$p_1$};
   \node[state] (p_4)[xshift=10cm,yshift=1.5cm] {$p'_1$};
   \node[state] (q_2)[xshift=8cm,yshift=-6cm] {$p_2$};
   \node[state] (p_2)[xshift=10cm,yshift=-7.5cm] {$p'_2$};
   \node (d) [xshift = 15cm]{};
    \path[->] 
    (q_0) edge node [swap] {$1$ \quad $0$} (q_1)
          edge [out=330,in=135]  node [xshift=-0.3cm,yshift=0.cm] {$2$} node  [xshift=0.3cm,yshift=-0.4cm] {$3$} (q_3)
          edge [out=90,in=90]  node {$4$} (q_4)
          edge [out=90,in=90]  node {$5$} (p_4)
    (p_0) edge node {$1$} node [xshift=1cm,yshift=-0.3cm] {$0$} (q_1)
          edge [out=300,in=180]  node [xshift=-0.3cm,yshift=0.2cm,swap] {$2$} node  [xshift=0.4cm,yshift=-0.1cm,swap] {$3$} (q_3)
          edge [out=90,in=90]  node {$4$} (q_4)
          edge [out=90,in=90]  node {$5$} (p_4)
    (q_1) edge [in=170,out=10]  node  {$4$} (q_4)
          edge [in=170,out=40]  node {$5$} (p_4)
          edge [in=95,out=85, loop] node [xshift=-0.4cm,yshift=-0.3cm,swap] {$0$} node [xshift=0.4cm,yshift=-0.3cm,swap] {$1$} ()
          edge  node [xshift=0cm,yshift=0.3cm,swap] {$2$} node [xshift=0cm,yshift=-0.3cm,swap]  {$3$} (q_3)
    (q_3) edge  node {$1$} (q_0)
          edge [out=175,in=305]  node [swap] {$0$} (p_0)
          edge node [xshift=0.15cm, yshift=-0.1cm]  {$4$ \quad $5$} (q_5) 
          edge [in=275,out=265, loop] node [xshift=-0.3cm,yshift=0.2cm,swap] {$2$} node [xshift=0.3cm, yshift=0.2cm,swap] {$3$} ()
    (q_5) edge [in=330,out=135]  node [xshift=-0.2cm,yshift=0cm,swap] {$1$} node [xshift=0.2cm, yshift=-0.4cm,swap] {$0$} (q_1)
          edge [out=260,in=100] node [swap] {$2$} (q_2)
          edge [out=315,in=100] node {$3$} (p_2)
          edge [in=355,out=5, loop] node [xshift=0.1cm, yshift=0.3cm,swap] {$4$} node [xshift=0.1cm, yshift=-0.3cm,swap] {$5$} ()
    (q_4) edge [in=0,out=0]  node {$2$} (q_2)
          edge [in=0,out=0]  node {$3$} (p_2)
          edge  node [xshift=0cm,yshift=-0.4cm] {$5$} node [xshift=0cm,yshift=0.4cm] {$4$} (q_5) 
          edge node [xshift=-0.5cm, yshift=0.05cm] {$1$} node [xshift=0.2cm, yshift=0.05cm] {$0$} (q_1)
    (p_4) edge [in=0,out=0]  node  {$2$} (q_2)
          edge [in=0,out=0]  node {$3$} (p_2)
          edge  node [xshift=0cm,yshift=-0.4cm] {$5$} node [xshift=0cm,yshift=0.4cm] {$4$} (q_5) 
          edge [out=175,in=35] node [xshift=-0.5cm, yshift=0.05cm] {$1$} node [xshift=0.2cm, yshift=0.05cm] {$0$} (q_1)
    (q_2) edge [in=270,out=180]  node [swap] {$1$} (q_0)
          edge [in=225,out=225]  node [swap] {$0$} (p_0)
          edge  node [xshift=-0.4cm,yshift=0.2cm,swap] {$3$} node [xshift=0.4cm,yshift=-0.5cm,swap] {$2$} (q_3) 
          edge node [xshift=0cm, yshift=-0.4cm,swap] {$4$} node [xshift=0cm, yshift=0.4cm,swap] {$5$} (q_5)
    (p_2) edge [in=270,out=180]  node [swap] {$1$} (q_0)
          edge [in=225,out=225]  node {$0$} (p_0)
          edge [in=315,out=170]  node [xshift=-0.4cm,yshift=0.55cm] {$3$} node [xshift=0.4cm,yshift=-0.1cm] {$2$} (q_3) 
          edge [out=105,in=310] node [xshift=0cm, yshift=-0.2cm] {$4$} node [xshift=0cm, yshift=0.4cm] {$5$} (q_5);
\end{scope}
\end{tikzpicture}
}
\end{center}
\caption{The automaton $A'$.}\label{fig:ExmaddingshaowstoB}
\end{figure}%
The states $p_0$ and $q_0$ satisfy the hypothesis of Lemma~\ref{Lem:relabelling two orbit} for any choice of state upon which both are incident. We may consequently apply the relabelling strategy set out in the proof on all edges of $p_0$ and $q_0$ simultaneously. Working through similarly labelled edges of $p_0$ and $q_0$, beginning with the smallest, the strategy  relabels such that corresponding labels match along orbits. This results in a relabelling by the transducer $I$ given in Figure~\ref{fig:exmconjugator}. We note that $I$ is in fact the minimal representative of the transducer arising from the relabelling map, and its state set corresponds to the following partition of $Q_{A'}$: $$c_0:=\{p'_0\},c_1:=\{q_0,p_0\}, c_2:=\{p_2\},c_3:=\{q_2,p'_2\}, c_4:=\{p_1\}, c_5:=\{q_1,p'_1\}.$$ %
\begin{figure}[htb]
\begin{center}
\scalebox{0.8}{
\begin{tikzpicture}[shorten >=0.5pt,node distance=3cm,on grid,auto,] 
    \node[state] (q_0)   {$c_0$}; 
    \node[state] (q_1) [xshift=3cm,yshift=3cm] {$c_1$};
   \node[state] (q_3) [xshift=3cm,yshift=-3cm] {$c_3$}; 
   \node[state] (q_5)[xshift=6cm,yshift=0cm] {$c_5$}; 
   \node[state] (q_4)[xshift=6cm,yshift=6cm] {$c_4$};
   \node[state] (q_2)[xshift=9cm,yshift=3cm] {$c_2$};
    \path[->] 
    (q_0) edge node [xshift=-0.3cm,yshift=-0.5cm] {$1|0$} node [xshift=0.3cm,yshift=-0.1cm] {$0|1$} (q_1)
          edge [out=325,in=130]  node [xshift=-0.3cm,yshift=0.cm] {$2|3$} node  [xshift=0.3cm,yshift=-0.5cm] {$3|2$} (q_3)
          edge [out=90,in=180]  node  {$4|4$} (q_4)
          edge [out=40,in=140]  node [xshift=-0.6cm, yshift=-0.2cm]  {$5|5$} (q_5)
    (q_1) edge [in=230,out=40]  node [xshift=0cm,yshift=0.4cm,swap]  {$4|4$} (q_4)
          edge [in=95,out=85, loop] node [xshift=-0.4cm,yshift=-0.3cm,swap] {$0|0$} node [xshift=0.4cm,yshift=-0.3cm,swap] {$1|1$} ()
          edge  node [xshift=0cm,yshift=0.3cm,swap] {$2|2$} node [xshift=0cm,yshift=-0.3cm,swap]  {$3|3$} (q_3)
          edge  node [xshift=-0.3cm,yshift=0.7cm, swap]  {$5|5$} (q_5)
    (q_3) edge  node {$0|0$} (q_0)
          edge node [xshift=-0.25cm, yshift=-0.6cm]  {$4|4$} node [xshift=0.4cm, yshift=-0cm] {$5|5$} (q_5) 
          edge [in=275,out=265, loop] node [xshift=-0.3cm,yshift=0.2cm,swap] {$2|2$} node [xshift=0.3cm, yshift=0.2cm,swap] {$3|3$} ()
          edge [out=85,in=275,swap]  node [xshift=-0.1cm] {$1|1$} (q_1)
    (q_5) edge [in=325,out=130]  node [xshift=-1cm,yshift=0.5cm,swap] {$1|1$} node [xshift=0.3cm, yshift=-0.8cm,swap] {$0|0$} (q_1)
          edge [out=40,in=230] node [swap] {$2|2$} (q_2) 
          edge [in=275,out=265, loop] node [xshift=0.3cm, yshift=0.1cm,swap] {$4|4$} node [xshift=-0.3cm, yshift=0.1cm,swap] {$5|5$} ()
          edge [out=230,in=40] node [xshift=-0.2cm] {$3|3$} (q_3) 
    (q_4) edge node {$2|2$} (q_2)
          edge node [xshift=-0.2cm,yshift=1.8cm] {$3|3$} (q_3)
          edge  node [xshift=0cm,yshift=0.2cm] {$5|4$} node [xshift=0cm,yshift=0.8cm] {$4|5$} (q_5) 
          edge node [xshift=-0.7cm, yshift=0.6cm] {$1|0$} node [xshift=0cm, yshift=1.1cm] {$0|1$} (q_1)
    (q_2) edge  node[xshift=0.4cm, yshift=0.8cm] {$0|0$} (q_0)
          edge [out=140,in=40]  node[xshift=0.6cm, yshift=0.5cm] {$1|1$} (q_1)
          edge [out=270,in=0]  node [xshift=-0.2cm,yshift=0.1cm] {$3|2$} node [xshift=0.4cm,yshift=0.6cm] {$2|3$} (q_3) 
          edge node [xshift=0cm, yshift=-0.4cm,swap] {$4|5$} node [xshift=0.4cm, yshift=0cm,swap] {$5|4$} (q_5);
\end{tikzpicture}
}
\end{center}
\caption{The relabelling transducer $I$.}\label{fig:exmconjugator}
\end{figure}%
$\bigcirc$
\end{example}

 \subsection{More complex obstructions to a collapse: revealing shadow states} \label{Sec:Shadowstates}

In this subsection we address a more subtle obstruction to collapsing pairs of states along their orbits. Let $H \in \hn{n}$ be an element of finite order which admits an unimpeded support $A$ realised by an automorphism $\phi$ of its underlying digraph. Suppose $s$ and $t$ are states of $Q_A$ which distribute similarly over $Q_A$. It may be that $s$ and $t$ satisfy the first hypothesis of Lemma~\ref{Lem:relabelling two orbit} but there is no bijection from the edges of $s$ to the edges of $t$ that preserves both terminal vertex and orbit size. (This situation occurs in  Example~\ref{Example:minrepfromdual} for the states $p_0$ and $q_0$ of $\Duak{H}{2}$: $p_0$ and $q_0$ distribute similarly over $Q_A$, however, all edges of $q_0$ have orbit length $6$ but the edges $(p_0, 0, q_0)$ and $(p_0, 1,q_0)$ both have orbit length $3$.)  What we demonstrate in this section is that under suitable hypothesis one can add additional states, \textit{shadow states}, to $A$ such that $s$ and $t$ satisfy the hypothesis of either Lemma~\ref{Lem:relabelling one orbit} or Lemma~\ref{Lem:relabelling two orbit}. This then allows one of those two relabellings to succeed, and, in such a way, as we  will see, that enables a collapse down to a support of a conjugate of $A$ which has fewer states than $A$.

We begin now to restrict to the context of a freely acting finite order element of $\hn{n}$ all of whose orbits are of size $N$ for some divisor $N$ of $n$. 
\begin{quote}
\textbf{Throughout this section we fix a divisor $1< N$ of $n$}. 
\end{quote}

 \subsubsection{Revealing hidden dynamics}

The key tool, Lemma~\ref{lemma:blowingup}, developed in this section hinges on the following observation.

Let $H \in \hn{n}$ have finite order and  let $A$ be a support of $H$ realised by an automorphism $\phi_H$. 
Let $q \in Q_A$. 
Suppose  there is a minimal length $k$ so that all paths of length $k$ that end on $q$ have orbits of length $N$ (for some divisor of $N$ of $n$) under the action of $\phi_H$, and for this choice of $q$ we have $k > 1$.  
Let $\mathcal{P} = e_0e_1\ldots e_{k-1}$ be a path of length $k$ terminating at $q$, where the orbit of $\mathcal{P}$ has length $N$ but the orbit of $e_1e_2\ldots e_{k-1}$ is of size $m$ for some $m<N$. For indices $0\leq i\leq j\leq k-1$ set $\mathcal{P}_{i,j}\seteq e_ie_{i+1}\ldots e_j$.  By construction, the least common multiple of the orbit length of the edge $e_0$ and of $m$ is $N$, and further, the orbit length of $\mathcal{P}_{1,k-2}$ must divide $m<N$.  

    In the definition that follows, the state $t$ corresponds to the target of $e_0$ from the path $\mathscr{p}$ mentioned above, while $b$ is some integer multiple of the orbit length of $t$, but which still properly divides $N$.

\begin{predef}\label{def:heavystate}
    Let $H \in \hn{n}$, $A$ be a support of $H$ realised by an automorphism $\phi_{H}$ and $t \in Q_{A}$ a state.  We say \emph{$t$ is heavy (for the triple $(A,\phi_H,N)$)} if the following conditions hold:
    \begin{itemize}
        \item there is a proper divisor $b$ of $N$, where $b$ is divisible by the length of the orbit of $t$;
                \item for any $x \in \xn$ and any $s \in Q_{A}$ such that $(s,x,t)$ is an edge of $A$, the lowest common multiple of $b$ and  the length of the orbit of $(s,x,t)$  under $\phi_{H}$ is $N$. 
    \end{itemize}
 In this case, we call the value $b$ above a \emph{divisibility constant for $t$} and observe that the set of valid divisibility constants for $t$ might have more than one element.
\end{predef}

 Lemma~\ref{lemma:blowingup} characterises how to perform an \emph{in-split along the orbit of a heavy state $t$}, where, by ``in-split'' we mean the like-named operation for edge-shift equivalences (see, e.g. \cite{kitchens}).   The new automaton that is created has all of the old states, together with new states which we call \emph{shadow states (from the orbit of $t$)}. Targeted  applications of this lemma will enable us to resolve the obstruction discussed in the introduction to Section~\ref{Sec:Shadowstates}.

We begin with the following preliminary result.

\begin{lemma}\label{lem:blowupLengthsExist}
    Let $H \in \hn{n}$ and  let $A$ be a support of $H$ realised by an automorphism $\phi_{H}$.  Suppose there are $b\in \N$ and $t \in Q_{A}$ so that $t$ is heavy for the triple $(B,\phi_H,N)$ with $b$ a divisibility constant for $t$, and where $\big|\{t\phi_H^i|i \in\Z\}\big| =r$.
    
    In these circumstances, there is $M \in \N$ a divisor of the lengths of orbits of all edges $(s,x,t)$ where $M$ satisfies the following further conditions:
       
       \begin{enumerate}[label=\roman*)]
       \item \label{pt:lcm} the lowest common multiple of $M$ and $b$ is $N$,        
       \item \label{pt:bigFactor}there is $m>1$ so that $M=mr$.
       \end{enumerate}
\end{lemma}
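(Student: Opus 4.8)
The plan is to take $M$ to be the greatest common divisor of the orbit lengths of all edges of $A$ terminating at $t$, and then to read off conditions \ref{pt:lcm} and \ref{pt:bigFactor} from a short prime-by-prime computation together with the two defining clauses of Definition~\ref{def:heavystate}.

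First I would set up notation. Let $\mathcal{E}$ be the set of edges $(s,x,t)$ of $A$ with target $t$; this set is nonempty because $A$, being a support of $H$, is core and hence every state is the target of some edge. For $e \in \mathcal{E}$ write $\ell_e$ for the length of the orbit of $e$ under $\phi_H$, and put $M \seteq \gcd\{\ell_e : e \in \mathcal{E}\}$. By construction $M$ divides every $\ell_e$, so $M$ is a divisor of the orbit lengths of all edges into $t$, as required. The basic observation is that $r \mid M$: since $e\phi_H^{\ell_e} = e$ and $t$ is the target of $e$, we also have $t\phi_H^{\ell_e} = t$, and as $r$ is the length of the $\phi_H$-orbit of $t$ this forces $r \mid \ell_e$; taking the gcd over $e \in \mathcal{E}$ gives $r \mid M$. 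In particular $M = mr$ with $m \seteq M/r \in \N$, so the only remaining point in \ref{pt:bigFactor} is that $m > 1$.

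Next I would prove \ref{pt:lcm}, that $\lcm(M,b) = N$. Heaviness gives $\lcm(b, \ell_e) = N$ for each $e \in \mathcal{E}$, and since $b \mid N$ (as $b$ is a proper divisor of $N$) this forces $\ell_e \mid N$; hence $M \mid \ell_e \mid N$ and therefore $\lcm(M,b) \mid N$. For the opposite divisibility I would work one prime $p$ at a time, writing $v_p(\cdot)$ for the $p$-adic valuation. If $v_p(b) = v_p(N)$ then $v_p(\lcm(M,b)) \ge v_p(b) = v_p(N)$. If instead $v_p(b) < v_p(N)$, then $\lcm(b,\ell_e) = N$ forces $v_p(\ell_e) = v_p(N)$ for every $e \in \mathcal{E}$, so $v_p(M) = \min_{e \in \mathcal{E}} v_p(\ell_e) = v_p(N)$ and again $v_p(\lcm(M,b)) \ge v_p(N)$. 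Thus $N \mid \lcm(M,b)$, and combining with the reverse divisibility gives $\lcm(M,b) = N$.

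Finally, for \ref{pt:bigFactor} I would argue by contradiction. If $m = 1$ then $M = r$, and \ref{pt:lcm} would give $N = \lcm(r,b) = b$, using that $r$ divides $b$ (part of the definition of a heavy state); but this contradicts $b$ being a \emph{proper} divisor of $N$. Hence $m \ge 2$, which completes the verification. I do not expect a genuine obstacle here: the argument is elementary divisibility arithmetic, and the only places calling for a small amount of care are the step yielding $r \mid \ell_e$ and the observation that $\mathcal{E} \neq \emptyset$, for which coreness of $A$ is exactly what is needed.
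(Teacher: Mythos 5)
Your proof is correct and follows essentially the same route as the paper's: take $M$ to be the $\gcd$ of the incoming-edge orbit lengths, observe $r\mid M$ by looking at the target vertex, verify $\lcm(M,b)=N$, and rule out $M=r$ using $r\mid b\mid N$ with $b$ a proper divisor. The only material difference is that you spell out the $\lcm(M,b)=N$ step with a prime-by-prime valuation argument, where the paper simply asserts it; this is a welcome tightening but not a different approach.
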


\begin{proof}
Let $L$ be the greatest common divisor of the orbit lengths of all edges $(s,x, t)$ in $A$. Let $(s,x,t)$ be an edge of $A$ with orbit length $k$ under the action of $\langle \phi_H\rangle$.
Now, by the second bullet point of Definition~\ref{def:heavystate}, we see that $\lcm(k,b)=N$.
It follows, as $(s,x,t)$ is an arbitrary incoming edge for $t$, that $\lcm(L,b)=N$ as well.  Since $r$ is the orbit length of $t$ we see that $r|k$ and since $(s,x,t)$ is an arbitrary incoming edge for $t$  we therefore have $r|L$.  
By definition, $r|b$, so if $r=L$ we would have $\lcm(L,b)=b<N$ which is a contradiction.  It then follows that  $L = lr$ for some integer $l>1$.  Thus the set of numbers $M$ which divide the orbit lengths of all edges $(s,x,t)$ and  satisfy  points \ref{pt:lcm} and \ref{pt:bigFactor} is non-empty. Now let $M$ be an element of this set and determine $m\in\N$ so that $mr=M$ (noting that $1<m$ by construction).
\end{proof}

Any number $M$ which arises as in Lemma \ref{lem:blowupLengthsExist} below will be referred to as a \emph{valid splitting length for (the heavy state) $t$ (with respect to divisibility constant $b$).}

We now state and prove the key result of this section  which is also illustrated in Example~\ref{Example:addingshadowstatestoA} below.

\begin{lemma}[Shadow states lemma]\label{lemma:blowingup}
    Let $H \in \hn{n}$ and  let $A$ be a support of $H$ realised by an automorphism $\phi_{H}$.  Suppose there is $b\in \N$ and $t \in Q_{A}$ so that $t$ is heavy for the triple $(A,\phi_{H},N)$ with $b$ a divisibility constant for $t$.  Set $t_{0,0}= t$ and let $t_{0,0}, t_{0,1}\ldots, t_{0,r-1}$ be the orbit of $t$ under iteration by $\phi_{H}$. Let $M$ be a valid splitting length of $t$ and let $m>1$ be determined by $M = mr$.
       
   Under these hypotheses, we may form a new synchronizing automaton $A'$ with $$Q_{A'}= Q_{A} \sqcup \{t_{a,0}, \ldots, t_{a,r-1} \mid 1 \le a < m\}$$ such that 
       \begin{enumerate}[label=\roman*)]
            \item $\pi_{A'}(x,s)\seteq\pi_{A}(x,s)$ for those pairs $(x,s)\in \xn\times Q_{A}$ where $\pi_{A}(x,s)$ is not in the orbit of $t$;
           \item { $\pi_{A'}(\cdot, t_{a,i}) \seteq \pi_{A'}(\cdot, t_{0,i})$ for all $0 \le a < m$, $0 \le i < r$};
           \item the incoming transitions of $A'$ to the set of vertices $\{t_{a,i}\mid 0\leq a<m, 0\leq i<r \}$ are determined by the above rules, and by an automorphism $\psi_{H}$ of the underlying digraph $G_{A'}$ of $A'$ satisfying: 
           \begin{enumerate}
               \item $(t_{0,0})\psi_{H}^{ar + i} = t_{a,i}$ for $0 \le a < m$, $0\leq i< r$,
               \item  $(t_{0,0}) \psi_{H}^{M} = t_{0,0}$, and, \item $\mathscr{H}(A',\psi_{H}) = A$.
           \end{enumerate}
            
           \end{enumerate}
\end{lemma}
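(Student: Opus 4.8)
The plan is to construct the edge-shift in-split explicitly along the orbit of $t$, and to simultaneously bootstrap the digraph automorphism $\psi_H$ by reading off the required edge images. First I would set up the combinatorial data. Write the orbit of $t$ as $t_{0,0},\ldots,t_{0,r-1}$ with $t_{0,i}=t_{0,0}\phi_H^{i}$, and introduce $m-1$ new copies of this orbit, indexed $t_{a,i}$ for $1\le a<m$, $0\le i<r$, so that altogether the $mr=M$ new vertices will form a single $\psi_H$-orbit of length $M$. The outgoing transitions are forced: we declare $\pi_{A'}(\cdot,t_{a,i})=\pi_{A}(\cdot,t_{0,i})$ for every $a$, and leave all transitions out of the old states $Q_A$ unchanged unless they originally terminated in the orbit of $t$ — those need to be redistributed among the $m$ shadow copies. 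This is precisely the freedom that an in-split provides: each incoming edge $(s,x,t_{0,i})$ of $A$ gets rerouted to exactly one of the copies $t_{a,i}$, and the choice is dictated by where $\psi_H$ must send that edge in order to realise the requested orbit structure.

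The key step is to make this rerouting and the automorphism $\psi_H$ cohere. Here I would use heaviness crucially. Fix an incoming edge $e=(s,x,t_{0,0})$ of $A$, with $\langle\phi_H\rangle$-orbit length $k$; by Lemma~\ref{lem:blowupLengthsExist} and the hypothesis that $M$ is a valid splitting length, $M\mid$ (this is not quite right) — rather, $M$ divides the orbit lengths of all incoming edges of $t$, so $M\mid k$ in the sense that the old edge-orbit of length $k$ decomposes into $k/(\text{something})$... more precisely the point is that the old edge $e$ and its $\phi_H$-images $e\phi_H^{j}$ that re-enter the $t$-orbit do so at the vertices $t_{0,j\bmod r}$, cycling with period $r$, and we want to lift this to a $\psi_H$-orbit of edges of length $N=\lcm(M,b)$ in $A'$ — wait, the edge orbit length must be $k$, which divides $N$; what heaviness buys is $\lcm(k,b)=N$, and we are choosing the shadow copy so that consecutive entries into the $t$-orbit land on $t_{0,0},t_{1,0},\ldots,t_{m-1,0},t_{0,0}$ in turn (stepping the first index by one each time the second index completes a cycle mod $r$). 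Concretely: set $(s,x,t_{0,0})\psi_H^{ar+i}$ to be an edge terminating at $t_{a,i}$, for $0\le a<m$, $0\le i<r$, with source $s\phi_H^{ar+i}$ and label determined by the $\phi_H$-image of $e$. Because $M=mr$ divides $k$ (as $M$ is a common divisor of incoming-edge orbit lengths — this is the content of "valid splitting length"), after $M$ steps the edge returns to a copy $t_{c,0}$ with $c\equiv (k/r)\cdot 0$... the bookkeeping is that $(t_{0,0})\psi_H^{M}=t_{0,M\bmod(mr)}=t_{0,0}$ since the first index advances mod $m$ and the second mod $r$ and $M=mr$, giving point (iii)(b); and point (iii)(a) is the defining equation $(t_{0,0})\psi_H^{ar+i}=t_{a,i}$.

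Next I would verify the three bulleted conclusions. Point (i) and point (ii) hold by construction. For (iii) I must check that the prescribed assignment really is a digraph automorphism of $G_{A'}$: it is a bijection on vertices (it permutes $Q_A\setminus\{t\text{-orbit}\}$ as $\phi_H$ did, and cyclically permutes the $mr$ shadow vertices), and it is a bijection on edges because every edge of $A'$ either lies outside the $t$-orbit-reentry edges (where $\psi_H$ agrees with $\phi_H$, already a bijection), or is one of the rerouted incoming edges, where the orbit structure was defined to be a single $\psi_H$-cycle covering each old $\phi_H$-edge-orbit exactly $M/k$... (covering consistently). That $\mathscr{H}(A',\psi_H)=A$ — i.e.\ collapsing the shadow copies back down recovers the original transducer — follows because the folding $A'\to A$ that identifies $t_{a,i}\sim t_{0,i}$ is compatible with all transition maps (by (ii) and (i)) and the induced map on $\mathscr{H}(A',\psi_H)$ is exactly $\mathscr{H}(A,\phi_H)$, whose minimal representative is $H$; here one invokes the discussion of $\mathscr{H}(A,\phi)$ and foldings from Section~\ref{sec:finiteByGraphAuto} together with Remark~\ref{rem:representationProps}. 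Finally $A'$ is synchronizing because it is a folding-preimage of the synchronizing automaton $A$ via an in-split, and in-splits preserve synchronization (one can also argue directly: if $A$ is synchronizing at level $\ell$, then $A'$ is synchronizing at level $\ell+k$ for suitable $k$, since reading one more block determines which shadow copy one is in).

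The main obstacle I expect is the careful indexing in the rerouting step — making the choice of shadow copy for each incoming edge globally consistent so that the result is genuinely a function (well-defined, single-valued) \emph{and} genuinely an automorphism, rather than merely a relation. The heaviness hypothesis, via the $\lcm(M,b)=N$ condition and the fact that $M$ divides every incoming edge-orbit length, is exactly what prevents a collision (two incoming edges of the same $\phi_H$-orbit being forced onto incompatible shadow copies), so the proof of well-definedness is really a congruence computation mod $m$ and mod $r$ that must be laid out with care; everything else is routine once that bookkeeping is pinned down.
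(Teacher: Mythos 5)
Your high-level plan --- an explicit in-split along the orbit of $t$, lifting the digraph automorphism by bookkeeping which shadow copy an edge must land on, and using $M \mid k$ (valid splitting length) plus $\lcm(M,b)=N$ (heaviness) to make the lift well-defined --- is the same as the paper's. But there is a genuine gap, and it is exactly where the paper's proof spends most of its effort.

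You declare $\pi_{A'}(\cdot,t_{a,i})=\pi_{A}(\cdot,t_{0,i})$ ``for every $a$'', but this formula is ill-posed: when $\pi_A(x,t_{0,i})$ itself lies in the orbit of $t$, the transition in $A'$ must land on some shadow copy $t_{c,j}$, and deciding which $c$ is precisely the same rerouting problem you are treating as separate. Your sentence simultaneously says the outgoing transitions from the new states are ``forced'' and that transitions terminating in the $t$-orbit ``need to be redistributed'' --- these two clauses conflict on every edge both of whose endpoints lie in the orbit of $t$. (The lemma's statement (ii) deliberately writes $\pi_{A'}$ on both sides because the right-hand side is itself part of what must be constructed.) More seriously, you have missed the ``special concern'' that the paper isolates as the delicate step: in $A$, an edge $(t_{0,0},x,s)$ whose $\phi_H$-orbit returns to having source $t_{0,0}$ does so once every $r$ steps, so the orbit passes through $t_{0,0}$ with several distinct labels $x_0,x_1,\ldots$; but in $A'$ the $\psi_H$-orbit of $t_{0,0}$ has length $M=mr>r$, so that same lifted edge-orbit passes through the vertex $t_{0,0}$ only once per $m$ returns. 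After closing the transversal under $\psi_H$, the vertex $t_{0,0}$ is therefore missing outgoing edges with labels $x_i$ for $i\not\equiv 0\pmod m$, and these have to be created and stitched in consistently with the rsc-style semiconjugacy $\iota\colon G_{A'}\to G_A$. Your proof never notices these missing edges; your closing remark identifies the incoming-edge rerouting as the only bookkeeping obstacle, when in fact the outgoing-edge deficit is an independent and equally essential issue. (Also, the phrase ``covering each old $\phi_H$-edge-orbit exactly $M/k$'' is incoherent --- the lifted edge-orbit in $A'$ has the same length $k$ as in $A$ and covers its $\iota$-image exactly once.)
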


\begin{proof}
Let $M$ be a valid splitting length for the heavy state $t_{0,0}$ with divisibility constant $b$, and let $m>1$ be an integer so that $M = mr$.

Set
\[
T\seteq\{t_{0,0},t_{0,1},\ldots,t_{0,r-1}\} 
\]
and build a set of new objects (the  ``shadow states'' arising from the splitting along the orbit of $t_{0,0}$) 
\[
T'=\{t_{a,1},t_{a,2},\ldots,t_{a,r}\mid 1\leq a< m\}.
\] Note that $|T\cup T'|=M$.

We will define a bijection  $\psi_{H}$ of 
\[
Q_{A'}\seteq Q_A\cup T'
\]
as follows.

For $s\in Q_{A'}$, set 
\[
s\psi_{H}=
\left
\{\begin{matrix*}[l]
s\phi_{H}&\textrm{ if }&s\in Q_A\backslash T&\\
t_{a,i+1}&\textrm{ if }&s=t_{a,i}\textrm{ and }i<r-1&\\
t_{a+1,0}&\textrm{ if }&s=t_{a,r-1}\textrm{ and }a<m-1&\\
t_{0,0}&\textrm{ if }&s=t_{m-1,r-1}.&
\end{matrix*}
\right.
\]  
It is immediate by construction that $\psi_{H}$ is a bijection, and also that the orbit of $t_{0,0}$ has size $M$. 

Eventually we want $\psi_{H}$ and $A'$ to satisfy the conclusion of the lemma. At the moment we only have a state set for $A'$, alternatively, $G_{A'}$ has a vertex set $Q_A$ and no edges.
We will specify transitions for $A'$ by steadily  adding edges of the form $(p,x,q)$, for $p,q\in Q_{A'}$ and $x\in \xn$, to $G_{A'}$ (thus adding the transition $\pi_{A'}(x,p) = q$ to $A'$). We add these edges while simultaneously extending the function $\psi_{H}$ on the corresponding edges of $G_{A'}$ in such a way that the conclusions of the lemma are satisfied.  Important for ensuring this occurs  will be a graph homomorphism $\iota:G_{A'}\to G_{A}$.  On the set $Q_{A'}$, $\iota$ is defined as follows: for $s\in Q_{A'}\backslash (T\cup T')$ set $s\iota\seteq s$, and for any $t_{a,i}\in T\cup T'$ set $t_{a,i}\iota\seteq t_{0,i}$. Whenever we extend $G_{A'}$ by adding new edges, we also extend the graph homomorphisms $\iota:G_{A'}\to G_A$ and $\psi_{H}:G_{A'}\to G_{A'}$ so as to maintain \emph{rsc}, the \emph{rule of semi-conjugacy}, which we define here.
{\flushleft{\it \underline{rsc}}}:
\begin{enumerate}
\item \label{pt:nscState} for all $q\in Q_{A'}$ we have $q\psi_{H}\iota=q\iota\phi_{H}$, and
\item \label{pt:nscEdge} for all edges $e$ of $G_{A'}$  we further require $e\psi_{H}\iota=e\iota\phi_{H}$.
\end{enumerate}

Of course we have part (\ref{pt:nscState}) of the rule because we have already defined $\iota$ and $\psi_{H}$ over $Q_{A'}$ to satisfy this rule.  

In the above construction of $\iota$, if $e\iota=(s,x,t)$ then we will identify $e$ as $(p,x,q)$ where $p$ is the source of $e$ and $q$ is the target of $e$, so after any extension we can always think of the new $G_{A'}$  as an edge-labelled directed graph with edge labels ``lifted'' from $G_A$ by the map $\iota$.

Note that below we will sometimes add a large collection of edges at one go, but in this case, there is always a well defined triple $(p,x,q)$ for each new edge. This is because we add in edges along an orbit under $\psi_{H}$ which always contains a well-defined edge $(s,y,t)$, from which we can detect the correct letter labelling of all edges along the orbit by using rsc.

 It follows that if $A'$ is a synchronizing automaton then $\mathscr{H}(A,\phi_{H})$ will represent the same element of $\hn{n}$ as $\mathscr{H}(A',\psi_{H})$, since the map $\iota$ never changes edge labels, and the map $\psi_{H}$ will have to change edge labels in the corresponding fashion as $\phi_{H}$ in order to uphold rsc.

    We now begin to specify the edges of $G_{A'}$, and hence the transition function $\pi_{A'}$.  Recall below that $Q_A\backslash T = Q_{A'}\backslash (T\cup T')$.
    
    Partition the edges of $G_A$ into the following four sets.  
    \begin{align*}
    N_T\seteq&\{(p,x,q)\mid p,q\not\in T\},\\
     B_{T}\seteq &\{(p,x,q)\mid p,q\in T\},\\
 D_T\seteq &\{(p,x,q)\mid p\in T, q\not\in T\}, \textrm{ and}\\
 R_T\seteq &\{(p,x,q)\mid p\not\in T, q\in T\}.
    \end{align*}

We observe that each of the sets above is invariant under the action of $\phi_{H}$.

    For $(p,x,q)$ an edge in $N_T$, let $(p,x,q)$ also be an edge of $G_{A'}$ (and so $(x,p)\pi_{A'} = q$ as well) and set $(p,x,q)\psi_{H} \seteq (p,x,q)\phi_{H}$. 
    
    \newcommand{\tb}{\mathscr{Y}_B}
    \newcommand{\td}{\mathscr{Y}_D}
    \newcommand{\tr}{\mathscr{Y}_R}

    Let $\tb$ be a traversal for the orbits of the edges in $B_T$ such that each edge in $\tb$ is of the form $(t_{0,0},x,t_{0,i})$.  Similarly set $\td$ to be a traversal for the orbits of the edges in $D_T$ so that each edge of $\td$ is of the form $(t_{0,0},x,s)$ for some $s\in Q_A\backslash T$.  Finally set $\tr$ to be a traversal for the orbits of the edges in $R_T$ so that each edge of $\tr$ is of the form $(s,x,t_{0,0})$ for some $s\in Q\backslash T$.
    
    Extend $G_{A'}$ to include $\td\cup\tr\cup \tb$ as edges incident on $t_{0,0}$ (we will add more edges incident on $t_{0,0}$ later).  Furthermore, use the action of $\psi_{H}$ on the set $Q_{A'}$ together with the map $\iota$ to uniquely determine new edges (of the form $(p,x,q)$ for $x\in \xn$) that must be added to $G_{A'}$ so that the resulting digraph is closed under the action of $\psi_{H}$, contains the transversal edges $\td\cup\tr\cup\tb$ and satisfies rsc.  Note that this process extends the definition of $\iota$ and $\psi_{H}$ to these new edges as well, but these extensions are inductively well defined. Now we may use the new edges of $G_{A'}$ in the obvious way to also extend the definition of the transition function $\pi_{A'}$ so as to create a correspondingly larger automaton $A'$.

    Observe that for any state $s\in Q_{A'}\backslash(T\cup T')=Q_A\backslash T$, the process above now has created a unique edge of the
    form $(s,x,q)$ for each $x\in \xn$ (which are the ``lifts'' of $N_T$ and $R_T$ to $G_{A'}$ by $\iota$).  For edges in $N_T$ this is simply by definition.  For an edge $(s,x,t_{0,i})\in R_T$, there is an edge $(s',x',t_{0,0})\in \tr$ so that there is a minimal non-negative integer $k$ with $(s',x',t_{0,0})\phi_{H}^k=(s,x,t_{0,i})$.  It follows that $(s',x',t_{0,0})\psi_{H}^k=(s,x,t_{a,i})$ for the unique non-negative $a$ so that $k=ar+i$.  Now suppose there is an edge $(s,x,t_{c,j})$ of $G_{A'}$.  By rsc, we see that $(s,x,t_{c,j})\iota = (s,x,t_{0,j})$ but as there is a unique outgoing edge in $G_A$ from $s$ with letter $x$ we see that $t_{0,j}= t_{0,i}$ and in particular, $i= j$.  We assume without meaningful loss of generality that $c\geq a$ and that $|c-a|$ is minimal amongst all such differences.  Thus by rsc, the orbit length of the edge $(s,x,t_{0,i})$ under $\phi_{H}$ is precisely $(c-a)r$ or else $c=a$.  However, $(c-a)r< M = mr$ and, by definition, $M$ divides the length of the orbit of $(s,x,t_{0,i})$.  It follows that $(s,x,t_{a,i})$ is the unique pre-image of $(s,x,t_{0,i})$ under $\iota$.

There remains a special concern that we must address.  Specifically, there are now pairs $(x,t_{a,i})\in \xn\times (T\cup T')$ so that there are no edges of the form $(t_{a,i},x,q)$ in $G_{A'}$.  This happens as
the orbit of $t_{0,0}$ has length $r$ under $\phi_{H}$ but length $M=mr>r$ under $\psi_{H}$. Also, to verify the coherence of the rsc condition for edges in $\mathscr{Y}_{B}$, recall that $M$ divides the orbit length of these edges as they are in the orbit of an edge incident to $t$.

We now deal with this concern.  Observe that for an edge $(t_{0,0},x,s)\in \td\cup\tb$, its orbit under $\phi_{H}$ may contain multiple edges of the form $(t_{0,0},y,q)$ (for various $y\in\xn$ and $q\in Q_A$).  Let us organise these as the sequence of pairwise distinct edges $(e_0,e_1,\ldots,e_k)$ where $e_i = (t_{0,0},x,s)\phi_{H}^{ri}$, and with $e_k\phi_{H}^{r} = e_0=(t_{0,0},x,s)$.  In this context, the orbit of $(t_{0,0},x,s)$ under $\phi_{H}$ has length $(k+1)r$.  Let us set notation $e_i=:(t_{0,0},x_i,q_i)$ so we can understand the letter $x_i$ associated to $e_i$ for each valid index $i$.  The concern is that in our current graph $G_{A'}$, for indices $0<i\leq k$ with $i \not\equiv 0 \mod{m}$, there is no edge of the form $(t_{0,0},x_i,q)\in G_{A'}$. For the letter $x_i$ observe that there is an edge of the form $(t_{a,0},x_i,\bar{q}_i)$ of $G_{A'}$ for $a = ir \mod M$ and some state $\bar{q}_i$ of $G_A$ (note that if $q_i \notin T$, then  $\bar{q}_i= q_i$). The rule of modification is, add the edge $(t_{0,0},x_i,\bar{q}_i)$ to $G_{A'}$, for all indices $0<i\leq k$ with $i \not\equiv 0 \mod{m}$.  Repeat this same procedure across all of the transversal elements $(t_{0,0},x',s')\in \td\cup\tb$ and as a consequence, for each letter $y\in \xn$, we see that the vertex $t_{0,0}$ now has a unique outgoing edge of the form $(t_{0,0},y,q)$. Finally, we again use the action of $\psi_{H}$ on vertices and the action of $\phi_{H}$ on $G_{A}$ along with the rsc condition to extend the definitions of $\psi_{H}$ and $\iota$ to the necessary edges we have to add to $G_{A'}$ in order to complete the orbits of our newly-added edges based at $t_{0,0}$, and to discern what letters needed to be associated to these new edges.  Now induce from $G_{A'}$ the enlarged automaton $A'$.

One observes that for any valid indices $a$ and $b$ and fixed index $i$, the states $t_{a,i}$ and $t_{b,i}$ of $A'$ have all the same outgoing transitions, and indeed, that the automaton $A'$ collapses back down to $A$ by identifying these states for each fixed $i$.  In particular $A'$ is synchronizing as it admits a collapse sequence to the $n$-leafed rose.  Further, the rsc condition implies that $\psi_{H}$ acts as an automorphism of the directed graph $G_{A'}$ in fashion locally emulating how $\phi_{H}$ acts on $G_A$ so that $\mathscr{H}(G_{A'},\psi_{H})$ represents $A$.

\end{proof}

Let $H, A, t=t_{0,0}$ be as in the statement of Lemma~\ref{lemma:blowingup}.  Assume we applied Lemma~\ref{lemma:blowingup} to lengthen the orbit of $t$ as in the lemma statement to create an automaton $A'$ with automorphism $\psi_{H}$ so that $\mathscr{H}(A',\psi_{H})$ has minimal representative $A$ and where the orbit of $t$ in $G_{A'}$ under the action of $\psi_{H}$ is the set $\{t_{a,i}\mid 0\leq a<m, 0\leq i<r\}$.  Now for each state $t_{0,i}$ for $0\leq i<r$ (these states are in the original orbit of $t$ in $G_{A}$ under the action of $\phi_{H}$),  we call the set of states $$\{t_{a,i} \mid 0 < a \le m-1\}\subsetneq Q_{A'}$$ the
\textit{shadow states for $t_{0,i}$ (in $Q_{A'}$)}.   Note that these are  states of $\mathscr{H}(A',\psi_{H})$ with local maps equivalent to the local map at $t_{0,i}$ for the transducer $\mathscr{H}(A,\phi_{H})$.  If we apply Lemma~\ref{lemma:blowingup} inductively and perhaps repeatedly on states on the now extended orbit of $t$, we extend the definition of the shadow states of $t_{0,i}$ to be the union of the sets of states added in each round of applying Lemma~\ref{lemma:blowingup} which have local maps  equivalent to the local map at $t_{0,i}$ for the transducer $\mathscr{H}(A,\phi_{H})$.  Note that  this process cannot go on forever as each application of Lemma~\ref{lemma:blowingup} lengthens the orbit of $t_{0,0}$ with $N$ an upper bound on the  length of this orbit.  Also note that each added state will be a shadow state of one of the original states $t_{0,j}$ after any number of iterated applications of Lemma~\ref{lemma:blowingup}.

We illustrate the proof of Lemma~\ref{lemma:blowingup} using the element $H \in \hn{6}$ in Example~\ref{Example:minrepfromdual}.

\begin{example}\label{Example:addingshadowstatestoA}

We return to the context and notation  of Example~\ref{Example:minrepfromdual}. Recall that 
$H \in \hn{6}$ is the element depicted in Figure~\ref{fig:ExampleACallup}. 

The state $p_0$ is a heavy state for the triple $(A:=\Duak{A}{2}, \phi_{H}, 6)$ with divisibility constant $3$ equal to the orbit length of $p_0$. There is only one valid splitting length for $p_0$: we must have $M= 6$ since the orbit length of $p_0$ is $3$ and all incoming edges into $p_0$ have orbit length $6$.

The new automaton $A'$ constructed as in Lemma~\ref{lemma:blowingup} is depicted in Figure \ref{fig:ExmaddingshaowstoB}.%

The states  $p'_0 := {p_{0}}_{_{1,0}}, p'_1:={p_{0}}_{_{1,1}}$
and $p'_2:= {p_{0}}_{_{1,2}}$ are the shadow states of $p_0$. Let $\psi_{H}$ be a lift  of $\phi_{H}$ to $G_{A'}$ as described in the proof of Lemma~\ref{lemma:blowingup}. The action of $\psi_{H}$ is uniquely determined by the facts that the orbit of $p_0$ under $\psi_{H}$ is $(p_0 \; p_{1} \; p_{2} \; p'_0 \; p'_{1} \; p'_{2})$ and $\mathscr{H}(A', \psi_{H})$ has minimal representative $H$.
$\bigcirc$
\end{example}

Lemma~\ref{Lemma:alledgesonlengthn}  and Corollary~\ref{cor:allstatesonlengthn}, both consequences of Lemma~\ref{lemma:blowingup}, are the heart of the matter.

\begin{lemma}\label{Lemma:alledgesonlengthn}
    Let $H \in \hn{n}$ and  let $A$ be a support of $H$ realised by an automorphism $\phi_{H}$ of its underlying digraph. Suppose that all circuits in $A$ are on orbits of length $N$ under the action of $H$. Then there is a support $\widehat{A}$ realised by an automorphism $\widehat{\psi}_{H}$ of $G_{\widehat{A}}$ such that any edge of $\widehat{A}$ is on an orbit of length $N$ under the action of $H$.
\end{lemma}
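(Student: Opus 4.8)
The plan is to iterate the Shadow states lemma (Lemma~\ref{lemma:blowingup}): as long as some edge of the current support lies on an orbit of length strictly smaller than $N$, I will exhibit a heavy state, lengthen its orbit via Lemma~\ref{lemma:blowingup}, and show the process terminates. For the bookkeeping, note first that a support is core, so $A$ is a folding of a de~Bruijn graph and hence strongly connected; thus every edge lies on a based circuit, and since the orbit length of a based circuit is the least common multiple of the orbit lengths of its edges and every based circuit of $A$ has orbit length $N$, every edge --- indeed every directed path --- of $A$ has orbit length dividing $N$. Moreover any directed path of length $\ge |Q_A|$ repeats a vertex, hence contains a circuit, so its orbit length is a multiple of $N$ and therefore equals $N$. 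Consequently, for every state $q$ the number
\[ k(q) := \min\{\, k \ge 1 : \text{every directed path of length } k \text{ ending at } q \text{ has orbit length } N \,\} \]
is well defined with $k(q) \le |Q_A|$, and every edge of $A$ has orbit length $N$ precisely when $k(q) = 1$ for all $q$.

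\textbf{Producing a heavy state.} Suppose some edge of $A$ has orbit length $<N$, so $k(q)>1$ for $q$ the head of such an edge. By minimality of $k:=k(q)$ there is a path $e_1 e_2 \cdots e_{k-1}$ of length $k-1$ ending at $q$ with orbit length $m \neq N$, hence a proper divisor of $N$; choose any edge $e_0$ whose head is the tail $t$ of $e_1$ (one exists as $A$ is strongly connected). Then $t$ is heavy for $(A,\phi_H,N)$ with divisibility constant $b:=m$: the orbit length of $t$ divides that of $e_1$, which divides $m$, so $b$ is a proper divisor of $N$ divisible by $|\{t\phi_H^i\}|$; and for every edge $(s,x,t)$ into $t$, prepending it to $e_1\cdots e_{k-1}$ yields a path of length $k=k(q)$ ending at $q$, which therefore has orbit length $N$, i.e.\ the least common multiple of $m$ and the orbit length of $(s,x,t)$ equals $N$ --- exactly the second clause of Definition~\ref{def:heavystate}.

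\textbf{One step and termination.} Lemma~\ref{lem:blowupLengthsExist} supplies a valid splitting length for $t$, and Lemma~\ref{lemma:blowingup} then builds a synchronizing automaton $A'$ with automorphism $\psi_H$ for which $\mathscr{H}(A',\psi_H)$ and $\mathscr{H}(A,\phi_H)$ represent $H$ (so $A'$ is again a support of $H$), and in which the orbit $O_0 \ni t$ of length $r:=|\{t\phi_H^i\}|$ has been replaced by a single orbit of length $M=mr>r$ absorbing all the new shadow states, all other orbits being unchanged. The circuit condition persists in $A'$: via the graph homomorphism $\iota\colon G_{A'}\to G_A$ and the rule of semi-conjugacy $e\psi_H\iota = e\iota\phi_H$ from the proof of Lemma~\ref{lemma:blowingup}, a based circuit $C'$ of $A'$ maps to a closed walk $C'\iota$ of $A$ of orbit length $N$, and $C'\psi_H^{\,j}=C'$ forces $C'\iota\phi_H^{\,j}=C'\iota$, whence $N$ divides the orbit length of $C'$; conversely the edge analysis in the proof of Lemma~\ref{lemma:blowingup} (treating the four classes $N_T,B_T,D_T,R_T$ of edges and using $M\mid N$) shows every edge of $A'$ has orbit length dividing $N$, so the orbit length of $C'$ divides $N$ too, forcing equality. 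Since the nonnegative integer $\Phi(A):=\sum_{O}(N-|O|)$, summed over the $\phi_H$-orbits of states, strictly drops (by $(m-1)r\ge 1$) at each such step, the process halts after finitely many iterations at a support $\widehat A$, realised by an automorphism $\widehat{\psi}_H$, admitting no heavy state; by the contrapositive of the previous paragraph, every edge of $\widehat A$ is then on an orbit of length $N$, which is the conclusion.

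\textbf{Expected main obstacle.} The delicate point is the claim, used above, that the in-split of Lemma~\ref{lemma:blowingup} neither creates an edge whose orbit length exceeds $N$ nor destroys the ``all circuits on orbit length $N$'' property; verifying it requires the edge-class bookkeeping from the proof of Lemma~\ref{lemma:blowingup} together with the fact that $M$ was chosen as a valid splitting length, so that $M$ divides the orbit length of every edge into $t$ and $\lcm(M,b)=N$. Everything else is a routine assembly of Lemmas~\ref{lem:blowupLengthsExist} and~\ref{lemma:blowingup} with the potential-function argument.
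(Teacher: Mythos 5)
Your proposal is correct and rests on the same fundamental engine as the paper's proof: locate a heavy state, apply the shadow states lemma (Lemma~\ref{lemma:blowingup}), and iterate. The heavy-state identification is essentially the paper's in different notation: where you take $k(q)$ and a length-$(k(q)-1)$ path ending at $q$ with short orbit, the paper fixes a short-orbit state $s$, builds the nested sets $Q_A(i,s)$, and takes $t \in Q_A(k,s)$ with a conformant path of orbit length $b<N$; in both cases the verification that every incoming edge of $t$ has $\lcm$ with $b$ equal to $N$ is the same one-line appeal to the maximality of the path length. Where you genuinely diverge is the termination argument: the paper runs a nested induction, first shrinking $|Q_{A'}(k,s)|$ to zero for a fixed $s$, then repeating over states and over the parameter $k$ (and this bookkeeping is somewhat involved, since new shadow states can alter the sets $Q_{A'}(j,p)$ for other $p$). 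Your single potential $\Phi(A)=\sum_O(N-|O|)$ avoids all of that: the in-split replaces one orbit of size $r$ by one of size $mr\le N$ and leaves the others intact, so $\Phi$ strictly decreases while staying nonnegative, and termination is immediate. This is simpler and, to my mind, cleaner, provided one verifies (as you flag in your ``expected obstacle'') that the circuit-orbit-$N$ hypothesis survives each in-split. You and the paper both pass over this rather quickly --- the paper asserts it parenthetically, you invoke ``the edge analysis'' in Lemma~\ref{lemma:blowingup}, which does not actually compute orbit lengths in $A'$ explicitly. The shortest clean route is to note that $\psi_H^N$ fixes every state of $A'$ (the orbit of $t$ has length $M\mid N$ and the rest are unchanged), that $\mathscr{H}(A',\psi_H^N)$ represents $H^N=\mathrm{id}$ so its edge labels are all of the form $x\mid x$, and hence $\psi_H^N=\mathrm{id}$ on $G_{A'}$; combined with your forward direction via $\iota$ and rsc this gives orbit length exactly $N$ for every circuit of $A'$. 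With that patch spelled out, your argument is complete and somewhat tidier than the paper's.
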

\begin{proof}
We first observe that all states of $A$ must be on orbits of length dividing $N$ as the underlying digraph of $A$ is synchronizing and therefore each state is visited by some circuit which is on an orbit of length $N$.  Also, we may assume that $|A|>1$ otherwise all loops at the state of $A$ will be on orbits of length $N$ and we would be done.

Let $s \in Q_{A}$ be such that $s$  is on an orbit of length strictly less than $N$ under $\phi_{H}$.  (If all states of $Q_A$ were on orbits of length $N$ then all edges of $A$ would be on orbits of length $N$ as well and we would be done.)

Inductively define states as follows. 

Set $Q_{A}(0,s):= \{s\}$. Assume $Q_{A}(i,s)$ is defined for some $i \in \N$. An element $q \in Q_{A}$ belongs to $Q_{A}(i+1, s)$ if the following conditions hold: 
\begin{enumerate}
    \item \label{defQBpathconformance}there are elements $x_0, x_1, \ldots, x_{i} \in \xn$, such that, for all $0 \le j \le i$, we have $\pi_{A}(x_{i}\ldots x_{j-1}, q) \in Q_{A}(j-1, s)$; and
    \item \label{defQBOrbitLength}the path $(q, x_{i}x_{i-1} \ldots  x_{0}, s)$ is on an orbit of length strictly less than $N$ under $\phi_{H}$. 
\end{enumerate} 
We observe that for any state $q$ in a set $Q_A(i+1,s)$, the orbit of $q$ under $\phi_{H}$ has size properly dividing $N$.  If $q\in Q_{A}(i+1,s)$ and $x_{i},x_{i-1},\ldots,x_0\in\xn$ satisfies points (\ref{defQBpathconformance}) and (\ref{defQBOrbitLength}) then we call the path $(q,x_{i}x_{i-1}\ldots x_0,s)$ \emph{conformant for $Q_{A}(i+1,s)$}.

Let $k \in \N$ be minimal so that $Q_{A}(k+1,s) = \emptyset$.  If such $k$ did not exist then there would a long path (as in point (\ref{defQBOrbitLength}) of the definition of the sets $Q_A(i,s)$) which is long enough that it must contain a circuit in $A$.  Any such circuit would be on an orbit of length strictly less than $N$ under the action of $\phi_{H}$, which is a contradiction.

By the argument above it also follows that whenever $j > 0$, $s \notin Q_{A}(j,s)$.

We now apply an induction argument using Lemma~\ref{lemma:blowingup} to iteratively add shadow states to $A$ so as to reduce $k$ to $0$.

Let $t \in Q_{A}(k,s)$ and fix $x_{k-1}, \ldots, x_{0} \in  \xn$, such that the path $(t, x_{k-1} \cdots x_{0},s)$ is conformant for $Q_A(k,s)$ and where the orbit of this path under the action of $\phi_{H}$ is of length $b < N$ (note that $b|N$ and also that the length of the orbit of $t$ divides $b$). Moreover, by choice of $k$, for any pair $(x,p) \in \xn \times Q_{A}$ with $\pi_{A}(x,p) = t$, the lowest common multiple of the length of the orbit of the edge $(p,x,t)$ and $b$ is $N$. 

 It follows that $t$ is heavy for  $(A,\phi_{H},N)$ and $b$ is a divisibility constant for $t$, so we may apply Lemma~\ref{lemma:blowingup} with the state $t$ and constant $b$ to add states in the orbit of $t$ and necessary edges to form a new synchronizing automaton $A'$ supporting $A$ and realised by an automorphism $\psi_{H}$ of $G_{A'}$. In particular, all circuits of $G_{A'}$ are still on orbits of length $N$ under the action of $\psi_{H}$).

Recall that by the construction of $A'$, the orbit of $t$, which includes all of its shadow states, is now of larger size $N_t'$ under the action of the resulting digraph automorphism $\psi_{H}$. Thus, for any $t'$ in the orbit of $t$ (including the shadow states we have just added), if there is a path from $t'$ to $s$ which is conformant for $Q_{A'}(k,s)$, then $t'$ (and therefore $t$) is heavy for $(Q_{A'},\psi_{H},N)$, so we may again inductively increase the length of the orbit of $t$ by adding more shadow states until there are no paths from a point in the orbit of $t$ to $s$ which are conformant for $Q_{A'}(k,s)$ (note this happens to be a consequence of $t$ no longer being heavy for $(A',\psi_{H},N)$ which must happen eventually as the orbit of $t$ is getting longer and is bounded above by $N$). Note that if $p\in Q_{A'}(k,s)$ but $p$ is not in the orbit of $t$, then $p\in Q_{A}(k,s)$.  In particular, we have  $|Q_{A'}(k,s)|<|Q_{A}(k,s)|$ as this count of states for $A'$ no longer includes the state $t$ nor any state in its orbit.

We can now inductively repeat this process for $s$ until $|Q_{A'}(k,s)|=0$.

Note that we can repeat this process for any state $p$ with $|Q_{A'}(k,p)|\neq 0$.  Thus we may now proceed inductively in this fashion until finally we have constructed an automaton $A'$ and an automorphism $\psi_{H}$ of $G_{A'}$ so that $A'$ folds onto $A$ and $\mathscr{H}(A',\psi_{H})$ represents $A$, and where if $q$ is any state and $j$ is minimal so that $Q_{A'}(j,q)= \emptyset$, then $j=0$. 

We set $\widehat{A}=A'$ and $\widehat{\psi}_{H}=\psi_{H}$ in this final case, noting that the orbit of every edge of $\widehat{A}$ under the action of $\widehat{\psi}_{H}$ is of length $N$.
\end{proof}

\begin{rem}
Let $H \in \hn{n}$ be an element of finite order and suppose that every point in $\xnN$ is on an orbit of length $N$ under the action of $H$. By lemma~\ref{Lemma:alledgesonlengthn}, there is a minimal (in size) synchronizing automaton $A$ such that $H$ acts as an automorphism $\phi_{H}$ of the underlying digraph of $A$ and all edges of $A$ are on orbits of length $N$ under the action of $H$.
\end{rem}

\begin{example}\label{Example:allorbitsonlengthn}
    Consider once more the element $H \in \hn{6}$ given in Figure~\ref{fig:ExampleACallup}. Its minimal support is the automaton $A$ in Figure~\ref{fig:Exmdigraph}. As discussed in Example~\ref{Example:addingshadowstatestoA}, the edges $(p_0, s, q_0)$ ($x \in \{0,1\}$) are on orbits of length $3$. Taking $s$ in the proof Lemma~\ref{Lemma:alledgesonlengthn} to be the state $q_0$ we compute;
    $$
       Q_B(0,q_0) = \{s\}, \quad
       Q_B(1,q_0) = \{p_0,p_1\}, \quad
       Q_B(2, q_0)  = \emptyset.
  $$

    We only need one application of Lemma~\ref{lemma:blowingup}. Proceeding as in Example~\ref{Example:addingshadowstatestoA}, we obtain the support $A'$ of $H$ realised by an automorphism $\psi_{H}$ of $G_{A'}$ under which all edges are on orbits of length $6$. $\bigcirc$
\end{example}

\begin{cor}\label{cor:allstatesonlengthn}
  Let $H \in \hn{n}$ and  let $A$ be a support of $H$ realised by an automorphism $\phi_{H}$ of its underlying digraph. Suppose that all circuits in $A$ are on orbits of length $N$ under the action of $H$. Then there is a support $\widehat{A}$ of $H$ realised by an automorphism $\widehat{\psi}_{H}$ of $G_{\widehat{A}}$ such that all states of $\widehat{A}$ are on orbits of length $N$ under the action of $\psi_{H}$.  
\end{cor}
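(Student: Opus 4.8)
The plan is to bootstrap from Lemma~\ref{Lemma:alledgesonlengthn}. That lemma already hands us a support $A'$ of $H$, realised by an automorphism $\psi_{H}$ of $G_{A'}$, on which \emph{every edge} lies on an orbit of length $N$; so the only remaining task is to repair the states. The first observation I would record is that on such an $A'$ every state automatically lies on an orbit whose length \emph{divides} $N$: since $A'$ is core, each state $q$ is the terminus of some edge $e=(p,x,q)$, and $e\psi_{H}^{N}=e$ forces $q\psi_{H}^{N}=q$. Thus the only defect left is the possible presence of states whose orbit length is a \emph{proper} divisor of $N$, and these are what we must eliminate.

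I would then set up an induction on the number of states of the current support that sit on orbits of length strictly less than $N$, the base case (no such state) being exactly the desired conclusion. For the inductive step, pick such a state $q$, of orbit length $r$ with $r\mid N$ and $r<N$. One checks that $q$ is heavy for the triple consisting of the current support, current automorphism, and $N$, with divisibility constant $b=r$: indeed $r$ is a proper divisor of $N$ divisible by the orbit length of $q$, and every incoming edge of $q$ has orbit length $N$, so $\lcm(b,N)=N$. Moreover $M:=N$ is a valid splitting length for $q$ in the sense of Lemma~\ref{lem:blowupLengthsExist}: it divides the orbit length $N$ of every incoming edge, $\lcm(M,b)=\lcm(N,r)=N$, and $M=(N/r)\,r$ with $N/r>1$. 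Applying Lemma~\ref{lemma:blowingup} to $q$ with this data produces a new support of $H$, realised by an automorphism $\psi'$, in which the orbit of $q$ — now $q$ together with its newly-revealed shadow states — has length exactly $N$, while every state outside the old orbit of $q$ retains its orbit length, since $\psi'$ agrees with the old automorphism on those states. Hence the count of short-orbit states strictly drops, and iterating drives it to $0$; the terminal support is the desired $\widehat{A}$ with automorphism $\widehat{\psi}_{H}$.

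The step I expect to demand the most care is verifying that the invariant ``every edge lies on an orbit of length $N$'' survives each application of Lemma~\ref{lemma:blowingup} at splitting length $M=N$; this invariant is precisely what guarantees the heaviness hypothesis at every subsequent round. Edges disjoint from the orbit of $q$ are untouched and keep their orbit length. For an edge $e$ of the new automaton that meets the enlarged orbit of $q$, its image under the collapsing graph homomorphism $\iota\colon G_{\text{new}}\to G_{\text{old}}$ supplied by Lemma~\ref{lemma:blowingup} lies on an orbit of length $N$, so the orbit of $e$ has length a multiple of $N$; conversely $e(\psi')^{N}$ has the same source as $e$ (every state of the new automaton still lies on an orbit dividing $N$) and the same input label (transitions are unique), hence equals $e$, so the orbit of $e$ also divides $N$ and is therefore exactly $N$. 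Granting this invariant, the heaviness hypothesis is available at every stage, the induction runs without further obstruction, and one obtains as a by-product that the final $\widehat{A}$ has all of its edges, as well as all of its states, on orbits of length $N$.
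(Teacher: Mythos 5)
Your proposal is essentially the paper's own proof, carried out with more care on two points. The paper's proof (as written) is terse: it invokes Lemma~\ref{Lemma:alledgesonlengthn} to reduce to the case where all edges have orbit length $N$, then observes that any short-orbit state $s$ is heavy, and applies Lemma~\ref{lemma:blowingup} with $M=N$ and ``$b=1$'', repeating until done. Your version is the same induction. The two places where you improve on the paper's exposition are as follows. First, you take the divisibility constant to be $b=r$, the orbit length of the state being split, rather than $b=1$; this is in fact required by Definition~\ref{def:heavystate}, which demands that $b$ be divisible by the orbit length of $t$, so the paper's choice $b=1$ only literally satisfies the definition when $r=1$. (The slip is harmless since $b$ plays no role in the construction once $M=N$ is fixed, but your choice is the correct one.) Second, you explicitly verify that the invariant ``all edges have orbit length $N$'' is preserved by each application of the shadow states lemma, via the $\iota$-semiconjugacy pushing the orbit length up to a multiple of $N$ and the unique-outgoing-edge argument pushing it down to a divisor of $N$; the paper leaves this implicit in the phrase ``we repeat the process.'' These are refinements rather than a different route, so this is the same proof, done more carefully.
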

\begin{proof}
    By applying Lemma~\ref{Lemma:alledgesonlengthn}, we may assume that all edges in $A$ have orbit length $N$.

    Suppose there is a state $s \in Q_A$ which has orbit length strictly less than $N$. Then, notice that $s$ is a heavy state. We thus apply the shadow states lemma, with $M = N$ and $b=1$. This creates a new support $A'$ of $H$ in which $s$ now has orbit length $N$.

    We repeat the process until all states have  orbit length $N$.
\end{proof}

\subsection{Rebraiding operation}

Notice that Lemma~\ref{Lem:relabelling one orbit} has the strong hypothesis that $A$ has only one orbit under the action of $\phi_{H}$. If $A$ has more than one orbit of states and we can always find a pair $(s,t)$ of states on distinct orbits satisfying the hypothesis of Lemma~\ref{Lem:relabelling two orbit}, then by repeatedly following the process of applying that lemma and then merging parallel orbits of equivalent states, we (at the possible cost of passing to some conjugate element) eventually fall into the regime of Lemma~\ref{Lem:relabelling one orbit}. However, if there is more than one orbit of states, it is not obvious that it is always possible to find such a pair $(s,t)$ of states satisfying the Hypotheses of Lemma~\ref{Lem:relabelling two orbit}. Lemma~\ref{lemma:rebraiding} below allows us, under suitable hypothesis, to  alter the automaton, without changing the number of states and while preserving the conjugacy class of $H$, such that we may find a pair $(s,t)$ of states satisfying the hypothesis of Lemma~\ref{Lem:relabelling two orbit}. We will show in Section~\ref{sec:conclusion} that the hypotheses of  Lemma~\ref{Lem:relabelling two orbit} are satisfied whenever $A$ has more than one orbit of states and all states and edges have the same orbit length $N$. 

\begin{lemma}[Rebraiding]\label{lemma:rebraiding}
    Let $H \in \hn{n}$ be an element of finite order with $A$ an unimpeded support of $H$ realised by an automorphism $\phi_H$. Suppose all states and all edges of $A$ have orbit length $N$. Let $s,t$ be states of $H$ which belong to distinct orbits. Suppose for all $x \in \xnp$, $\pi_{A}(x,s)$ and $\pi_{A}(x,t)$ belong to the same orbit. Then there is a support $\widehat{A}$ of a conjugate $\widehat{H}$ of $H$ realised by an automorphism $\phi_{\widehat{H}}$, such that the following things hold:
    \begin{itemize}
        \item there is a bijection $\iota: Q_{A} \to Q_{\widehat{A}}$ such that $\iota\phi_{\widehat{H}} = \phi_{H}\iota$;
        \item for $x \in \xnp$, $\pi_{\widehat{A}}(x, s\iota) = \pi_{\widehat{A}}(x,t\iota)$.
    \end{itemize}
\end{lemma}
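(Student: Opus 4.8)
The plan is as follows. Since $A$ is unimpeded with every state and every edge on a $\phi_{H}$-orbit of length exactly $N$, and since by hypothesis $\pi_{A}(x,s)$ and $\pi_{A}(x,t)$ lie on a common $\phi_{H}$-orbit for every $x\in\xnp$, there is a unique function $g\colon\xnp\to\Z/N\Z$ with $\pi_{A}(x,t)=(\pi_{A}(x,s))\phi_{H}^{g(x)}$ for all $x\in\xnp$. First I would record the compatibility of $g$ under concatenation and, using that $A$ is synchronizing, observe that $g$ is determined by bounded history (it factors through the synchronizing map of the ``doubled'' automaton that tracks the pair of states reached from $s$ and from $t$). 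The point to extract is that the hypothesis forces this twist to be a \emph{coboundary} in the appropriate sense: there is an assignment of elements of $\Z/N\Z$ to the states of $A$, trivial off the orbit of $t$, realising $g$; without the ``for all $x\in\xnp$'' hypothesis this would be obstructed by a circuit through $t$ carrying nonzero total twist. Consequently the discrepancy between $s$ and $t$ can be absorbed by a single globally consistent re-routing of the outgoing edges of the $\phi_{H}$-orbit of $t$.

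Next I would build $\widehat{A}$. Set $Q_{\widehat{A}}=Q_{A}$, so that $\iota$ is the identity on underlying sets, let $\phi_{\widehat{H}}$ agree with $\phi_{H}$ on states, keep the outgoing transitions of every state outside the $\phi_{H}$-orbit $T=\{t\phi_{H}^{i}\}$ unchanged, and redefine the outgoing transitions of the states of $T$ in the unique $\phi_{H}$-equivariant way for which the twist is killed, i.e.\ so that $\pi_{\widehat{A}}(x,t)=\pi_{\widehat{A}}(x,s)$ for every $x$; then transport the edge-action of $\phi_{H}$ along this re-routing to define the action of $\phi_{\widehat{H}}$ on the edges of $\widehat{A}$. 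One must check that $\widehat{A}$ is still an automaton (one outgoing edge per letter per state), that it is still core and synchronizing at the original level — here unimpededness of $A$ is used, since the re-routing only affects whole $\phi_{H}$-orbits and hence does not disturb the amalgamation/synchronizing sequence of $G_{A}$ down to the $n$-leafed rose — and that $\mathscr{H}(\widehat{A},\phi_{\widehat{H}})$ is again invertible and synchronizing, hence has a minimal representative $\widehat{H}\in\hn{n}$.

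Then I would exhibit the conjugator. Following the pattern of the relabelling and in-split constructions of the previous subsections, let $I\in\hn{n}$ be the synchronizing transducer whose states refine the synchronizing structure of $A$ and whose local map, at words forcing a state in the orbit of $t$, applies exactly the letter-permutation accomplishing the re-routing (and the identity elsewhere). A direct computation with the transducer product, as in Lemma~\ref{lemma:relabellinginducesconj}, shows that $\widehat{H}:=I^{-1}HI$ is the minimal representative of $\mathscr{H}(\widehat{A},\phi_{\widehat{H}})$. Since $\iota$ is the identity on states and $\phi_{\widehat{H}}$ agrees with $\phi_{H}$ there, $\iota\phi_{\widehat{H}}=\phi_{H}\iota$; and by construction $\pi_{\widehat{A}}(x,s\iota)=\pi_{\widehat{A}}(x,t\iota)$ for all letters $x$, which propagates to all $x\in\xnp$, giving both conclusions (and, incidentally, makes $s\iota$ and $t\iota$ collapsible in $\widehat{A}$).

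The main obstacle, and where the real work lies, is the global coherence in the second paragraph: one must re-route the outgoing edges of the entire orbit $T$ simultaneously and $\phi_{H}$-equivariantly while simultaneously preserving all three properties (valid automaton; core and synchronizing at the original level; genuinely matching a conjugation of $H$ rather than an unrelated transducer). It is precisely the hypothesis that $\pi_{A}(x,s)$ and $\pi_{A}(x,t)$ share a $\phi_{H}$-orbit for \emph{every} $x\in\xnp$ that makes the twist $g$ a coboundary and hence killable, and it is unimpededness together with the uniformity of orbit lengths ($=N$) that keeps the collapse chain of $G_{A}$ intact after the re-routing, so that $\widehat{A}$ is once again a bona fide synchronizing support.
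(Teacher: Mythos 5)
Your plan goes wrong at the central step: you propose to construct $\widehat{A}$ directly by re-routing the outgoing edges of the orbit of $t$, i.e.\ setting $\pi_{\widehat{A}}(x,t):=\pi_{A}(x,s)$ for every $x\in\xn$, and then to realise this as a conjugacy via a relabelling transducer $I$ as in Lemma~\ref{lemma:relabellinginducesconj}. But a relabelling in the sense of Definition~\ref{def:relabelling} can only permute the letters attached to edges already present at a state: the (multi)set of targets of edges out of $t$ in the relabelled automaton is exactly the same as in $A$, only the labels move around. If $\pi_{A}(x,t)=\pi_{A}(x,s)\phi_{H}^{g(x)}$ with $g(x)\neq 0$, then $\pi_{A}(x,s)$ need not be the target of \emph{any} edge out of $t$ in $A$, so there is no choice of $(A_k,\rho_k)$ whose relabelling produces your $\widehat{A}$, and Lemma~\ref{lemma:relabellinginducesconj} is not available. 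Equivalently, your $\iota$ is not a vertex-fixing digraph isomorphism $G_A\to G_{\widehat{A}}$ of the kind the relabelling toolkit supplies, so nothing you cite certifies that $\mathscr{H}(\widehat{A},\phi_{\widehat{H}})$ is even $\omega$-equivalent to a conjugate of $H$. The claim that the twist $g$ ``is a coboundary and hence killable'' is asserted rather than proved, and in any case a coboundary statement about $g$ does not, on its own, hand you a synchronizing transducer $I\in\hn{n}$ accomplishing the job.

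The paper's proof avoids this trap precisely by not re-routing directly. It in-splits: it introduces the intermediate state sets $Q'(0),\ldots,Q'(k-1)$ along paths from the orbits of $t$ and $s$ so that, after splitting, the formerly disagreeing targets $\pi(w,s)$ and $\pi(w,t)$ lie on \emph{distinct} orbits with identical outgoing transition maps. At that point Lemma~\ref{Lem:relabelling two orbit} performs a genuine label permutation --- a bona fide relabelling, hence a conjugacy by Lemma~\ref{lemma:relabellinginducesconj} --- and the subsequent collapse removes the extra states again. Iterating from the maximal disagreement depth $k$ down to $1$ returns an automaton on the original state set satisfying both bullet points. To rescue your shortcut you would need an independent argument producing a transducer $I\in\hn{n}$ with $I^{-1}HI$ represented by $\mathscr{H}(\widehat{A},\phi_{\widehat{H}})$, which is essentially what the paper's split/relabel/collapse cycle is constructing.
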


\begin{proof}

     Set $k \in \N$ maximal such that $\pi_{A}(w,s) \ne \pi_{A}(w,t)$ for some $w \in  \xn^{k}$.
     
     We inductively construct an automaton $A'$. Initially $Q_{A'} = Q_{A} \sqcup Q'$ where $Q' = \emptyset$ and $\pi_{A'} = \pi_{A}$.  Simultaneously, we build a map $\psi_{H}$  realising an automorphism for $A'$ as a support of $H$. Initially, $\psi_{H} = \phi_{H}$.

     For each $x \in \xn$ such that $\pi_{A}(x,s) \ne \pi_{A}(x,t)$, we consider two cases.

     If $p:= \pi_{A}(x,t) \ne t$, define new states $p_{x,0}, \ldots, p_{x,N-1}$. Consider the edge $(t,x,p)$. For each $0\leq i< N$ set the notation $(t\phi_{H}^{i},x\phi_{H}^{i},p\phi_{H}^{i}):=(t,x,p)\phi_{H}^{i}$. Now set $\pi_{A'}(x\phi_{H}^{i}, t\phi_{H}^{i}) = p_{x,i}$. For all $y \in \xn$, we set $\pi_{A'}(y, p_{x,i}) = \pi_{A}(y, p\phi_{H}^{i})$. Note that the states in the orbit of $p$ still has incoming edges from the orbit of $s$ while $N$ of the edges from the orbit of $t$ that used to target the orbit of $p$ now have targets amongst the states $p_{x,i}$.  The map $\psi_{H}$ acts on the new edges and new states in the obvious way due to the constraint that $\mathscr{H}(A', \psi_{H})$ represents $H$.

          If $\pi_{A}(x,t) = t$, then $\pi_{A}(x,s)\neq t$ is in the orbit of $t$ and so is not $s$ and in particular $\pi_{A}(x,s) \not\in \{s,t\}$. In this case we may follow  the previous paragraph setting $p:= \pi_{A}(x,s)$ and replacing  replacing $t$ by $s$ in the construction.  The map $\psi_{H}$ is extended as before.
          
     Set $Q' = Q'(0)$ to be all new states added above.
     
     For any other pair $(x,p)$ which is not covered in the cases above, set $\pi_{A'}(x,p) = \pi_{A}(x,p)$.

     Note that after this step
     \begin{itemize}
         \item $A'$ is still a synchronizing support of $H$;
         \item for all $xy \in \xn^{2}$, $\pi_{A'}(xy, s)$ and $\pi_{A'}(xy,t)$ belong to the same orbit in $A'$ under $\psi_{H}$;
         \item at most one of $\pi_{A'}(x, s)$ and $\pi_{A'}(x,t)$ is an element of $Q'(0)$; 
         \item $\pi_{A}(x, q) \not\in Q'(0)$ for any $x \in \xn$ and $q \in Q'(0)$.
     \end{itemize}

     For each $x_0x \in \xn^{2}$ such that $\pi_{A}(x_0x, s) \ne \pi_{A}(x_0x, t)$ we do the following. First note that exactly one of $\pi_{A'}(x_0,s)$ and $\pi_{A'}(x_0,t)$ is an element of  $Q'(0)$. We assume that $q':= \pi_{A'}(x_0, t) \in Q'(0)$ (the other case is handled similarly). Then for $p = \pi_{A'}(x_0x, t)$, we form new states $p_{x_0x,0}, p_{x_0x,1}, \ldots, p_{x_0x,N-1}$. Let $q = \pi_{A}(x_0, t)$ and note that $\pi_{A}(x, q) = p$. For all $0\leq i<N$ set the notation $(q\phi_{H}^{i},x\phi_{H}^{i}, p\phi_{H}^{i})\seteq (q,x,p)\phi_{H}^{i}$ and redefine $\psi_{H}$ and $A'$ using the rule $\pi_{A'}(x\phi_{H}^{i},q'\psi_{H}^{i}) =p_{x_0x, i}$ (for each $i$, this will remove an edge from $A'$ and $\psi_{H}$ while adding a new edge and state to $A'$). Finally, we set $\pi_{A'}(y,p_{x_0x,i}) = \pi_{A'}(y, p\psi_{H}^{i})$ for all $y \in \xn$. Extend $\psi_{H}$ in the natural way.

     Set $Q'(1)$ to be all new states added in the process just described. So that $Q' = Q'(0) \sqcup Q'(1)$. Note that after this step
     \begin{itemize}
         \item $A'$ is still a synchronizing support of $H$ realised by $\psi_{H}$;
         \item for all $x \in \xn^{3}$, $\pi_{A'}(x, s)$ and $\pi_{A'}(x,t)$ belong to the same orbit in $A'$ under $\psi_{H}$;
         \item  for all $x \in \xnp \cap \xn^{\le 2}$ at most one of $\pi_{A'}(x, s)$ and $\pi_{A'}(x,t)$ is an element of $Q'(|x|-1)$;
         \item $\pi_{A}(x, q) \not\in Q'(1)$ for any $x \in \xn$ and $q \in Q'(1)$.
     \end{itemize}

   Recall that $k$ is maximal such that for some $w \in \xn^{k}$, $\pi_{A}(w,s) \ne \pi_{A}(w,t)$. We carry forward the constructive process above until we have formed new states $Q'(k-1)$ so that $Q' = \bigsqcup_{i=0}^{k-1}Q'(i)$ such that the following holds:
    \begin{itemize}
        \item $A'$ is still a synchronizing support of $H$ realised by $\psi_{H}$;
        \item for all $x \in \xn^{k+1}$, $\pi_{A'}(x, s)$ and $\pi_{A'}(x,t)$ belong to the same orbit in $A'$ under $\psi_{H}$ (and are in fact the same state by the definition of $k$); 
        \item for all $x \in \xnp \cap \xn^{\le k}$ at most one of $\pi_{A'}(x, s)$ and $\pi_{A'}(x,t)$ is an element of $Q'(|x|-1)$;
        \item for all $x \in \xn$ and $q \in Q'(k-1)$ we have $\pi_{A}(x, q) \not\in Q'(k-1)$.
    \end{itemize}

    We may strengthen the third bullet point as follows: for any $x \in \xnp \cap \xn^{\le k}$ such that $\pi_{A}(x,s) \ne \pi_{A}(x,t)$, the orbits of the states $\pi_{A'}(x,s)$ and $\pi_{A'}(x,t)$ under $\psi_{H}$ are distinct in $A'$.

    Below, we successively relabel and collapse $A'$ to obtain the desired automaton $\widehat{A}$.

    Fix a word $w \in \xn^{k}$ such that $\pi_{A}(w,s) \ne \pi_{A}(w,t)$. Then $p:=\pi_{A'}(w,s)$ and $p':=\pi_{A'}(w,t)$ belong to distinct orbits of $A'$ under $\psi_{H}$ (exactly one belongs to $Q'(k-1)$). Assume that $p' \in Q'(k-1)$ (the other case is dealt with analogously).  Now as $\pi_{A'}(x,p) = \pi_{A'}(x,p')$, for all $x \in \xn$, since $A$ was assumed to be unimpeded, it follows that $\pi_{A'}(x, p\psi_{H}^{i}) = \pi_{A'}(x, p'\psi_{H}^{i})$ for all $x \in \xn$ and all $0 \le i \le N-1$. Thus, by Lemma~\ref{Lem:relabelling two orbit}, we may relabel, by a vertex fixing automorphism of $A'$, such that for any $x \in \xn$ and any $0 \le i \le N$ the edges $(p, x, \pi_{A'}(x,p))\psi_{H}^{i}$ and $(p',x, \pi_{A'}(x,p))\psi_{H}^{i}$ have identical labels. We may then identify each pair of states $p\psi_{H}^{i}$ and $p'\psi_{H}^{i}$ while preserving an action by a conjugate of $H$ on the resulting automaton. 

    We now repeat this process along all words $w \in \xn^{k}$ such that $\pi_{A}(w,s) \ne \pi_{A}(w,t)$. Let $A''$ be the resulting automaton $\psi_{H'}$ be the resulting induced automorphism. We observe that 
    \begin{itemize}
        \item for all $x \in \xn^{k}$, $\pi_{A'}(x, s)=\pi_{A'}(x,t)$ belong to the same orbit in $A''$ under $\psi_{H'}$; 
        \item for all $x \in \xnp \cap \xn^{\le k-1}$ at most one of $\pi_{A'}(x, s)$ and $\pi_{A'}(x,t)$ is an element of $Q'(|x|-1)$;
        \item $\pi_{A}(x, q) \not\in Q'(k-2)$ for any $x \in \xn$ and $q \in Q'(k-2)$.
    \end{itemize}

    We can thus repeat the process. Eventually, we have an automaton $\widehat{A}$ with, $Q_{\widehat{A}} = Q_{A}$, an induced automorphism $\phi_{\widehat{H}}$ of $G_{A}$ which acts in the same way as $\phi_{H}$ does on $Q_A$, and such that $\pi_{\widehat{A}}(x, s\phi_{\widehat{A}}^{i})=\pi_{A'}(x,t\phi_{\widehat{H}}^{i})$ for all $x \in \xn$ and $0 \le i \le N-1$.
     
\end{proof}

\begin{cor}\label{cor:rebraidingreduction}
    Let $H \in \hn{n}$ be an element of finite order. Let $A$ be an unimpeded support of $H$ realised by an automorphism $\phi_H$ and such that  all states and all edges of $A$ have orbit length $N$. Let $s$ and $t$ be states of $H$ which belong to distinct orbits. Suppose for all $x \in \xnp$, $\pi_{A}(x,s)$ and $\pi_{A}(x,t)$ belong to the same orbit. Then there is a support $\overline{A}$ of a conjugate $\overline{H}$ of $H$ realised by an automorphism $\phi_{\overline{H}}$, such that the following things hold:
    \begin{itemize}
        \item $|\overline{A}|< |A|$;
        \item all states and all edges of $\widehat{A}$ have orbit length $N$ under $\phi_{\overline{H}}$.
    \end{itemize}
\end{cor}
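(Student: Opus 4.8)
The plan is to apply the rebraiding lemma, Lemma~\ref{lemma:rebraiding}, to move the obstruction out of the way, then align edge labels along two orbits with Lemma~\ref{Lem:relabelling two orbit}, and finally fold those two orbits together. First I would feed $(A,\phi_H)$ and the pair $(s,t)$ into Lemma~\ref{lemma:rebraiding}: its hypotheses are exactly those of the corollary, so it returns a support $\widehat A$ of a conjugate $\widehat H$ of $H$ realised by an automorphism $\phi_{\widehat H}$ of $G_{\widehat A}$, together with a bijection $\iota\colon Q_A\to Q_{\widehat A}$ satisfying $\iota\phi_{\widehat H}=\phi_H\iota$ and $\pi_{\widehat A}(x,s\iota)=\pi_{\widehat A}(x,t\iota)$ for all $x\in\xnp$. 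Writing $\bar s:=s\iota$ and $\bar t:=t\iota$, the construction in the proof of Lemma~\ref{lemma:rebraiding} in fact yields $\pi_{\widehat A}(\,\cdot\,,\bar s\phi_{\widehat H}^{\,i})=\pi_{\widehat A}(\,\cdot\,,\bar t\phi_{\widehat H}^{\,i})$ for every $i$, and keeps $|Q_{\widehat A}|=|Q_A|$; moreover $\bar s$ and $\bar t$ still lie on distinct orbits of length $N$, and one checks from that same construction (the newly added states and edges all sit on $\psi_H$-cycles of length $N$) that every state and every edge of $\widehat A$ still has orbit length $N$.

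Next I would align edge labels along the orbits of $\bar s$ and $\bar t$. For every state $p$ occurring as $\pi_{\widehat A}(x,\bar s)$, the four hypotheses of Lemma~\ref{Lem:relabelling two orbit} hold for the triple $(\bar s,\bar t,p)$: $\lets{\bar s}{p}\neq\emptyset$; the transitions of $\bar s\phi_{\widehat H}^{\,i}$ and $\bar t\phi_{\widehat H}^{\,i}$ into the orbit of $p$ coincide, by the first paragraph; the orbits of $\bar s$ and $\bar t$ are distinct and both of length $N$; and corresponding edges have equal orbit length since every edge has orbit length $N$. Applying Lemma~\ref{Lem:relabelling two orbit} successively over the (pairwise non-interfering, since each touches only edges from the orbit of $\bar t$ into one target orbit) target orbits then produces an automaton $B$, automata-isomorphic to $\widehat A$, with an induced automorphism $\varphi$; by Lemma~\ref{lemma:relabellinginducesconj} the pair $(B,\varphi)$ is a support of a conjugate of $H$, and it satisfies $\pi_B(\,\cdot\,,\bar s\varphi^i)=\pi_B(\,\cdot\,,\bar t\varphi^i)$ together with the property that, for every $i$ and $x$, the edges $(\bar s\varphi^i,x,q)\varphi$ and $(\bar t\varphi^i,x,q)\varphi$ carry the same label, where $q=\pi_B(x,\bar s\varphi^i)$.

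Finally I would let $\equiv$ identify $\bar s\varphi^i$ with $\bar t\varphi^i$ for $0\le i<N$ and be trivial elsewhere. Since $\bar s\varphi^i$ and $\bar t\varphi^i$ have identical outgoing transitions, $\equiv$ is a folding, so $\overline A:=B/{\equiv}$ is a core synchronizing automaton; $\varphi$ permutes the $\equiv$-classes, hence descends to an automorphism $\phi_{\overline H}$ of $G_{\overline A}$; and the label alignment above says $\bar s\varphi^i$ and $\bar t\varphi^i$ have the same local map in $\mathscr{H}(B,\varphi)$, so $\mathscr{H}(B,\varphi)$ descends to $\mathscr{H}(\overline A,\phi_{\overline H})$, which is therefore $\omega$-equivalent to $\mathscr{H}(B,\varphi)$ and so represents a conjugate $\overline H$ of $H$. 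The state count drops by exactly $N$, so $|\overline A|=|A|-N<|A|$; the classes $[\bar s\varphi^i]$ form a single $\phi_{\overline H}$-orbit of length $N$ (a shorter one would put $\bar s$ on the orbit of $\bar t$), and all other states and all edges of $\overline A$ are images of states and edges of $B$ that already had orbit length $N$, whose images — by the description of the fibres of the folding on edges, which have size at most two and consist of edges with distinct sources $\bar s\varphi^i,\bar t\varphi^i$ in different $\varphi$-orbits — again have orbit length $N$. Hence all states and all edges of $\overline A$ have orbit length $N$ under $\phi_{\overline H}$.

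The main obstacle I expect is the bookkeeping in the middle step: verifying that the successive applications of Lemma~\ref{Lem:relabelling two orbit} over distinct target orbits genuinely do not interfere, that these relabellings preserve both $\pi_B(\,\cdot\,,\bar s\varphi^i)=\pi_B(\,\cdot\,,\bar t\varphi^i)$ (because each relabelling only permutes edge labels within fibres of the transition map at a state of the orbit of $\bar t$) and the ``all states and edges have orbit length $N$'' property, and that the quotient $B/{\equiv}$ carries a well-defined induced automorphism whose transducer represents a conjugate of $H$ rather than some unrelated element; the orbit-length count for $\overline A$, while routine, likewise depends on making the two-to-one structure of the folding explicit.
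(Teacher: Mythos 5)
Your proposal follows the same route the paper takes: first invoke Lemma~\ref{lemma:rebraiding} to replace $(A,\phi_H)$ by $(\widehat A,\phi_{\widehat H})$ in which $\bar s:=s\iota$ and $\bar t:=t\iota$ have identical outgoing transition maps along their orbits, then use Lemma~\ref{Lem:relabelling two orbit} to align edge labels along those two orbits so that the orbit of $\bar t$ can be folded onto the orbit of $\bar s$, yielding $\overline A$ with $|\overline A|=|A|-N$ and the orbit-length property preserved. The paper's proof is terser (it states the steps without the interference, well-definedness, and orbit-length checks), but the argument and supporting lemmas you use are exactly the same.
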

\begin{proof}

Let $(\widehat{A},\phi_{\widehat{A}})$ be the automaton obtained from $(A, \phi)$ as in the statement of Lemma~\ref{lemma:rebraiding}. We may identify, using Lemma~\ref{Lem:relabelling two orbit}, the states $s,t$ along their orbits to obtain a pair $(\overline{A}, \phi_{\overline{H}})$ supporting a conjugate $\overline{H}$ of $H$. Clearly, $|\overline{A}|< |A|$. Since the orbits of $s$ and $t$ are disjoint, then $(\overline{A}, \phi_{\overline{H}})$ still retains the property that all states and all edges have orbit length $N$.
    
\end{proof}

\section{Stitching it all together}\label{sec:conclusion}

We have now developed all the tools necessary to prove the main result. 

\begin{lemma}\label{lemma:allorbitsonlegnthniffallcircuitsonlength}
Let $H \in \hn{n}$ be an element of finite order and let $N \in \N$ be a divisor of $n$. Let $A$ be a support of $H$ realised by an automorphism $\phi_{H}$ of its underlying digraph $G_A$.   Then every point of $\xnN$ is on an orbit of length $N$ under the action of $H$ if and only if all circuits in $A$ are on orbits of length $N$ under the action of $\phi_{H}$.
\end{lemma}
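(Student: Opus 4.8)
First I would recast the $H$-action on $\xnN$ as an action of $\phi_H$ on the left-infinite walks of $G_A$. Fix a synchronizing level $k$ of $A$, and for $x=\ldots x_{-1}x_0\in\xnN$ let $P(x)$ be the left-infinite walk $(\ldots,e_{-1},e_0)$ in $G_A$ whose edge at position $i$ runs, with label $x_i$, from the state forced by $x_{i-k}\ldots x_{i-1}$ to the state forced by $x_{i-k+1}\ldots x_i$. Synchronization (with completeness of $A$) makes this well defined, and conversely every left-infinite walk of $G_A$ is ``synchronized'' --- the state at each position is the one forced by the preceding $k$ labels --- hence is recovered uniquely from its label sequence; so $P$ is a bijection from $\xnN$ onto the set $\mathcal W$ of left-infinite walks of $G_A$, with inverse given by recording edge-labels. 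Unwinding the definitions of $f_{\mathscr{H}(A,\phi_H)}$ and of $\mathscr{H}(A,\phi_H)$, the $i$-th letter of $(x)H$ is precisely the label of the edge $e_i\phi_H$, so $P\big((x)H\big)=\big(P(x)\big)\phi_H$, where $\phi_H$ acts on a walk by acting on each of its edges. Hence $P$ intertwines the two actions and preserves orbit lengths, and it remains to show that every walk in $\mathcal W$ has $\phi_H$-orbit length $N$ if and only if every based circuit of $G_A$ does.

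For this I would record two elementary observations. First, for a walk $W$ (or a based circuit $C$) and $j\ge 1$, $\phi_H^{\,j}$ fixes $W$ (resp.\ $C$) exactly when $\phi_H^{\,j}$ fixes every edge occurring in it; since $G_A$ is finite and $\phi_H$ has finite order, orbit lengths are finite and equal the least common multiple of the orbit lengths of the occurring edges. Second, after replacing $A$ by its core if necessary --- which changes neither the set of circuits of $G_A$ (every circuit of a synchronizing automaton lies in its core) nor the induced transformation of $\xnN$, and for which $\phi_H$ still restricts to an automorphism of $G_A$ (it permutes circuits, and in a core automaton every vertex lies on one) --- every edge of $G_A$ lies on some based circuit, and, by pigeonhole on the finite vertex set, every walk in $\mathcal W$ contains some based circuit as a consecutive sub-walk.

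Granting these, both directions are short. If every based circuit has $\phi_H$-orbit length $N$ and $W\in\mathcal W$, then a based circuit occurring in $W$ forces $N\mid\mathrm{orb}(W)$, while the fact that each edge of $W$ lies on an orbit-$N$ circuit (hence is fixed by $\phi_H^{\,N}$) forces $\mathrm{orb}(W)\mid N$; so $\mathrm{orb}(W)=N$. Conversely, given a based circuit $C=(c_0,\ldots,c_{m-1})$, repeating it left-infinitely yields a walk $W\in\mathcal W$ (valid since the target of $c_{m-1}$ is the source of $c_0$) whose occurring edges are exactly $c_0,\ldots,c_{m-1}$, so by the first observation $\mathrm{orb}(C)=\mathrm{orb}(W)=N$.

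I do not expect a serious obstacle here: the only substantive ingredient is the coreness fact that every edge of $G_A$ lies on a based circuit, and everything else is bookkeeping. The one point that needs care is the based-versus-unbased convention for circuits --- one must consistently take the orbit length of a based circuit to be the lcm of its edges' orbit lengths, never allowing a cyclic rotation to count as a return, and check that passing between a based circuit and its left-infinite repetition, and between a walk and a sub-circuit it contains, respects orbit lengths.
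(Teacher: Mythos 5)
Your proof is correct, and it takes a genuinely different route from the paper's. The paper's argument is topological: it observes that periodic points of $\xnN$ correspond to circuits of $G_A$ and that periodic points are dense, so ``all points on orbits of length $N$'' reduces to ``all periodic points on orbits of length $N$,'' which is the circuit statement. What that argument elides is that density alone only gives $H^N=\id$; to rule out some \emph{non-periodic} point being fixed by $H^d$ for a proper divisor $d$ of $N$, one needs to observe that the fixed-point set of $H^d$ is a nonempty shift-invariant closed subshift (in fact sofic, as $H^d$ is a sliding block code), and hence contains a periodic point if it is nonempty. Your argument is purely combinatorial and sidesteps this: you conjugate the $H$-action to the coordinatewise $\phi_H$-action on left-infinite walks of $G_A$ (the synchronizing level gives the required bijection and equivariance), observe that the orbit length of a walk or a based circuit is the lcm of the orbit lengths of its occurring edges, and then close the loop with two graph facts: every left-infinite walk contains a based sub-circuit (pigeonhole), and every edge lies on a based circuit (strong connectivity, which holds since $A$ is core and synchronizing). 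This yields both divisibilities $N \mid \mathrm{orb}(W)$ and $\mathrm{orb}(W)\mid N$ directly. Your version is longer but more elementary and fully self-contained; it avoids any appeal to density or to properties of sofic shifts, and it makes explicit the step where coreness of $A$ is actually used. One small remark: a ``support'' of $H$ is by definition already core in this paper, so the parenthetical ``after replacing $A$ by its core if necessary'' is unnecessary, though harmless.
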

\begin{proof}
    This proof follows straight-forwardly from the observation that the orbits of circuits in $A$ under the action of $\phi_{H}$ correspond to the action of $H$ on periodic points of $\xnN$. 
    Now as periodic points are dense in $\xnN$, the following chain of equivalences is true: all points of $\xnN$ are on orbits of length $N$ under the action of $H$ if and only if all periodic points of $\xnN$ are on orbits of length $N$ under the action of $H$ if and only if all circuits of $A$ are on orbits of length $N$ under the action of $H$.
 \end{proof}

\begin{lemma}\label{lemma:reverseimplicaton}
    Let $H \in \hn{n}$ be an element of finite  order and let $N$ be a divisor of $n$. Let $A$ be a support of $H$ realised by an automorphism $\phi_{H}$ of its underlying digraph $G_A$. Suppose that all circuits in $A$ are on orbits of length $N$, then $H$ is conjugate to a product of $n/N$ disjoint $N$-cycles.
 \end{lemma}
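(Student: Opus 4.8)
The plan is to orchestrate the machinery of the previous sections into a size‑reduction procedure that terminates at a single‑state automaton, from which the permutation can simply be read off. First, by Lemma~\ref{lemma:allorbitsonlegnthniffallcircuitsonlength} the hypothesis is equivalent to the statement that every point of $\xnN$ lies on an orbit of length $N$ under $H$, and this is manifestly invariant under conjugacy; thus ``all circuits lie on orbits of length $N$'' continues to hold for every support we meet (of $H$, or of a conjugate of $H$) throughout the argument. Applying Lemma~\ref{Lemma:alledgesonlengthn} and then Corollary~\ref{cor:allstatesonlengthn} to $(A,\phi_H)$ we obtain a support $\widehat A$ of $H$, realised by some $\widehat\psi_H$, in which every state and every edge lies on an orbit of length $N$. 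Using Lemma~\ref{lemma:collapseadherence} together with Lemma~\ref{lemma:relabellinginducesconj} we may further replace $H$ by a conjugate and $\widehat A$ by a relabelling of it so that $\widehat A$ is unimpeded; relabellings change neither the underlying digraph nor the induced automorphism on it, so the conditions on orbit lengths of states and edges persist.

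We now run the following loop, preserving the invariant that we have an unimpeded support $A$ of a conjugate of $H$ with all states and all edges on orbits of length $N$. If $|A|=1$ we stop: then $A$ is the $n$‑leafed rose, $\phi_H$ fixes its unique vertex and so permutes the $n$ loops by some $\alpha\in\Sigma_n$, and $\mathscr{H}(A,\phi_H)$ is exactly the single‑state transducer representing the permutation automorphism of $\alpha$. Since each loop is a circuit, hence on an orbit of length $N$, the $\phi_H$‑orbit of the loop labelled $x$ is the $\alpha$‑orbit of $x$, so every cycle of $\alpha$ has length $N$, i.e. $\alpha$ is a product of $n/N$ disjoint $N$‑cycles; as $\mathscr{H}(A,\phi_H)$ represents a conjugate of $H$, this proves the lemma. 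If $|A|>1$ there are two cases. If $A$ has at least two orbits of states, then (the main point, discussed below) we can find states $s,t$ on distinct orbits with $\pi_A(w,s)$ and $\pi_A(w,t)$ on a common orbit for every $w\in\xnp$; Corollary~\ref{cor:rebraidingreduction} then yields a support $\overline A$ of a conjugate of $H$ with $|\overline A|<|A|$, still with all states and edges on orbits of length $N$, and re‑unimpeding $\overline A$ restores the invariant for a strictly smaller automaton. If instead $A$ has a single orbit of states, so $|A|=N$, we invoke Lemma~\ref{Lem:relabelling one orbit}: after verifying its hypothesis we fold along the orbits of a suitable power $\phi_H^{r}$ to obtain a support of a conjugate of $H$ of size $\gcd(N,r)<N$, again with all edges on orbits of length $N$ and again with a single orbit of states; iterating this fold (each step strictly shrinking the automaton) we reach $|A|=1$ and conclude as above. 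Since the number of orbits of states strictly decreases in the first case and the size strictly decreases in the second, the loop terminates.

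The crux — and the genuinely delicate point — is producing, at each pass with $|A|>1$, the input required by Lemma~\ref{Lem:relabelling two orbit} (via Lemma~\ref{lemma:rebraiding}) in the multi‑orbit case, and by Lemma~\ref{Lem:relabelling one orbit} in the single‑orbit case. This is where unimpededness and the hypothesis that \emph{all} states and edges lie on orbits of length $N$ are used. Since $A$ is synchronizing with $|A|>1$, the relation $\sim_A$ is non‑trivial, and unimpededness forces the first step of the synchronizing sequence of $A$ to coincide with the first amalgamation step of $G_A$, so we may pick distinct states $s,t$ with $\pi_A(\cdot,s)=\pi_A(\cdot,t)$; when these lie on distinct orbits we even get $\pi_A(w,s)=\pi_A(w,t)$ for all nonempty $w$, placing us in the first case, and when they lie on the same orbit one must exploit the common orbit length $N$ of states and edges either to manufacture a cross‑orbit pair (so Lemma~\ref{lemma:rebraiding} applies) or to extract a uniform period $r<N$ with $\pi_A(\cdot,s\phi_H^{i})=\pi_A(\cdot,s\phi_H^{i+r})$ for all $i$ after a relabelling by a vertex‑fixing automorphism of an appropriate folding. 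Carrying out this case analysis, and checking that the relabellings of Lemmas~\ref{Lem:relabelling one orbit} and~\ref{Lem:relabelling two orbit} and the rebraiding of Lemma~\ref{lemma:rebraiding} all preserve ``all states and edges on orbits of length $N$'', is the remaining work, and is the step I expect to occupy most of the proof.
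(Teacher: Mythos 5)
Your proposal correctly reproduces the paper's overall architecture: pass, via Corollary~\ref{cor:allstatesonlengthn}, Lemma~\ref{lemma:collapseadherence} and Lemma~\ref{lemma:relabellinginducesconj}, to an unimpeded support of a conjugate of $H$ in which every state and edge lies on an orbit of length $N$; then iterate size-reducing conjugations (Lemma~\ref{Lem:relabelling two orbit} with Lemma~\ref{lemma:rebraiding}/Corollary~\ref{cor:rebraidingreduction} in the multi-orbit case, Lemma~\ref{Lem:relabelling one orbit} in the single-orbit case) down to the one-state automaton, and read off a permutation whose cycles are the $\phi_H$-orbits of loops, hence all of length $N$. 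That all matches the paper.

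However, there is a genuine gap at exactly the place you flag as ``the crux.'' In the multi-orbit case the paper needs a pair $s,t$ of states on \emph{distinct} orbits with $\pi_A(w,s)$ and $\pi_A(w,t)$ on a common orbit for every nonempty $w$. Your sketch tries to get this from a $\sim_A$-related pair (two states with identical one-step transitions), and then splits into cases according to whether that pair lies on one orbit or two, leaving the same-orbit sub-case unresolved (``manufacture a cross-orbit pair or extract a uniform period $r<N$''). This does not close the argument: a synchronizing automaton with several orbits may well have the property that \emph{all} $\sim_A$-related pairs lie on a single orbit, and unimpededness alone does not rule this out. The paper's proof sidesteps the issue with a different device: run a collapse chain of $A$ in which each step makes the maximal collapses \emph{subject to the constraint that every part consists of states on a single $\phi_H$-orbit}. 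Because there are several orbits, this constrained chain must halt at some $A_m$ with $|A_m|>1$; since $A_m$ is still a synchronizing folding of $A$, it has two distinct states $[s],[t]$ with identical transition maps, and because the chain has stalled these must lie on distinct orbits. Unwinding the parts gives the required pair $s,t$, with $\pi_A(w,s)$ and $\pi_A(w,t)$ merely on the same orbit (not equal) for all nonempty $w$ --- which is precisely the hypothesis of Lemma~\ref{lemma:rebraiding}, and also explains why rebraiding (rather than a bare relabel-and-collapse) is needed. Without this constrained-collapse argument, or some substitute for it, the multi-orbit case of your proof is not complete.
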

 \begin{proof}

 We may assume that $N>1$ since otherwise $H$ is the identity map.

By Corollary~\ref{cor:allstatesonlengthn} and Lemma~\ref{lemma:collapseadherence} we may assume that $A$ is an unimpeded support of $H$ and all states of $A$ have orbit length $N$ under $\phi_{H}$.

We consider two cases. 

First assume that there is only a single orbit of states under the action of $\phi_{H}$. As $A$ is unimpeded, the hypothesis of Lemma~\ref{Lem:relabelling one orbit} is satisfied for some $r$ dividing the orbit length of a state of $A$. We may then obtain an automaton $A'$ of size strictly smaller than $A$, supporting a conjugate $H'$ of $H$ realised by an automorphism $\phi_{H'}$ such that $A'$ has a single orbit of states and all edges of $A'$ have length $N$.

We then inductively apply Lemma~\ref{Lem:relabelling one orbit} until we have a single state automaton supporting a conjugate of $A$.

Now assume that there are multiple orbits of states under the action of $\phi_{H}$.

We claim that the are states $s,t \in Q_A$ belonging to distinct orbits under $\phi_{H}$ such that $\pi_{A}(x,s)$ and $\pi_{A}(x,t)$ belong to the  same orbit for all $x \in \xnp$.

To see this, consider a collapse sequence $A= A_0, A_1, A_2,\ldots,...$ where at each step we make the maximum possible collapses available subject to the constraint that the resulting induced partition of $Q_A$ has parts consisting only of states which belong to the same orbit. Since there are multiple orbits of states, there is some $m$ at which the sequence terminates (i.e no collapses are possible which satisfy the constraint) and $|A_m|>1$. Now since $A$ is synchronizing, there are states  $s,t \in Q_{A}$ belonging to distinct orbits such that $\pi_{A_m}(\cdot, [s])$ and $\pi_{A_m}(\cdot, [t])$ are identical maps on $\xnp$. It follows that $\pi_{A}(w, s)$ and $\pi_{A}(w,t)$ belong to the same orbit under the action of $\phi_{H}$ for all $w \in \xnp$.

Now we apply Lemma~\ref{lemma:rebraiding} and Corollary~\ref{cor:rebraidingreduction} to obtain a conjugate action of $H$ on a smaller support $A'$ which arises by identifying the pairs of states $s\phi_{\widehat{H}}^{i}, t\phi_{\widehat{H}}^{i}$ for all $i$. Let $\phi_{H'}$ be the induced action on $A'$.

We thus obtain a pair $(A,H')$ where all states of $A'$ have orbit length $N$ under the action $\phi_{H'}$ of $H'$ and $|A'|< |A|$.

We may therefore repeat the process until we have reduced to the first case.

 \end{proof}

Theorem~\ref{thm:mainresultIntro} now follows by putting together Lemmas~\ref{lemma:allorbitsonlegnthniffallcircuitsonlength} and \ref{lemma:reverseimplicaton}.

\begin{example}\label{exm:Hisconjugateto6cycle}
   We finish with a demonstration that the automaton $H$ in Figure~\ref{fig:ExampleACallup} is conjugate to a $6$-cycle.

    For this example it is enough to apply Lemma~\ref{Lemma:alledgesonlengthn} to obtain a support in which all edges have orbit length $6$. This is accomplished in Example~\ref{Example:allorbitsonlengthn} with resulting support $A'$ (depicted in Figure~\ref{fig:ExmaddingshaowstoB}) realised by an automorphism $\psi_{H}$ of $G_{A'}$.

    Now observe that the states $p_0$ and $q_0$ satisfy the hypothesis of Lemma~\ref{Lem:relabelling two orbit}. Applying Lemma~\ref{Lem:relabelling two orbit} is as in Example~\ref{Example:oneandtwoorbit}. Denote by $\psi_{H'}$ the induced conjugate action on $A'$ (with the new labels on its edges) and by $\phi_{H'}$ the corresponding action, after re-identifying shadow states, on $A$.  The resulting conjugate $H'$ of $H$ is the automaton to the left of Figure~\ref{fig:Exmoneround}. The conjugator, $I$ (depicted in Figure~\ref{fig:exmconjugator}) is the relabelling transducer.
Observe that the action of $\phi_{H'}$ on the outgoing edges of $p_i$ ($i =0,1,2$) in $A$ is mirrored on the outgoing edges of $q_i$. In particular for $i=0,1,2$, we may identify the pair of states $p_i, q_i$, to obtain an action $\varphi_{H'}$ on a smaller automaton $\bar{A}$. This is the automaton to the right of Figure~\ref{fig:Exmoneround}; the action of $\varphi_{H'}$ can be determined explicitly from $H'$. Note that the state $a_{2i + 1}$, $i = 0,1,2$, of $\bar{A}$ corresponds to the pair of states $q_{i}, p_{i}$ of $A$.

Notice that all edges of $G_{\bar{A}}$ are on orbits of length $6$ (under $\varphi_{H'}$) and $\bar{A}$ is unimpeded and has a single orbit of states. This is the regime of Lemma~\ref{Lem:relabelling one orbit} and the application of that lemma was worked out in Example~\ref{Example:oneandtwoorbit}.  The conjugator is the relabelling transducer $J$ in Figure~\ref{fig:Exmconjugatorround2}.%

The reader can verify that conjugating $H'$ by $J$ results in the single state transducer corresponding to the $6$-cycle $(0 \; 4 \; 2 \; 1 \; 5 \; 3)$.
Therefore, the element $IJ$ of $\hn{6}$ conjugates $H$ to the $6$-cycle $(0 \; 4 \; 2 \; 1 \; 5 \; 3)$.
$\bigcirc$
\end{example}

\printbibliography 
\end{document}